\newtheorem{prop}{Proposition}[section]
\newtheorem{coro}[prop]{Corollary}
\newtheorem{lemm}[prop]{Lemma}
\newtheorem{clai}[prop]{Claim}
\newtheorem{theo}[prop]{Theorem}
\theoremstyle{definition}
\newtheorem{defi}[prop]{Definition}
\newtheorem{ques}[prop]{Question}
\newtheorem{rema}[prop]{Remark}
\begin{document}

\title{Schoenflies problem for area preserving biLipschitz mappings}
\author{Maxim~Prasolov}
\date{}

\maketitle

\makeatletter
\renewcommand{\@makefnmark}{}
\makeatother

\begin{abstract}
We prove that any biLipschitz mapping of the boundary of the unit disk onto the boundary of the domain of the same area can be extended to a biLipschitz mapping of the whole plane which preserves the area of any measurable subset.
\end{abstract}

\tableofcontents

\section{Introduction}

Let $X,Y$ be metric spaces. A mapping $f:X\to Y$ is called $L$-biLipschitz if for any points $x,y\in X$
$$
\frac1L\cdot\mathrm{dist}(x,y) \le \mathrm{dist}(f(x),f(y)) \le L\cdot\mathrm{dist}(x,y).
$$
A mapping is called biLipschitz if it is $L$-biLipschitz for some $L.$

Let $X$ be a subset of Euclidean plane $\mathbb R^2$. We say that a mapping $f:X\to\mathbb R^2$ preserves the area if for any measurable subset $A$ of $X$
$$
\mathrm{Area} (A) = \mathrm{Area}(f(A)).
$$

Let $\Omega$ be an open subset of $\mathbb R^2.$ A mapping $f:\Omega\to\mathbb R^2$ is called locally piecewise affine if $\Omega$ is a locally finite union of triangles on which $f$ is affine.

Let $B = [-1;1]^2\subset\mathbb R^2.$

\begin{theo}\label{main-theorem}
For any $L$ there exist positive $K$ and $R$ such that any $L$-biLipschitz mappings of $\partial B$ onto the boundary of a domain of area $4$ can be extended to an area preserving $K$-biLipschitz mapping of $\mathbb R^2$ that is locally piecewise affine in $\mathbb R^2\setminus\partial B$ and coincides with a translation outside some disk of radius $R.$
\end{theo}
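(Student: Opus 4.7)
The plan is to first extend $\varphi$ to a biLipschitz (not yet area-preserving) homeomorphism $F_1$ of $\mathbb{R}^2$ that sends $B$ onto $\Omega$ and coincides with a translation outside a disk $D_R$ of radius $R=R(L)$, and then correct $F_1$ separately on $\mathrm{int}\,B$ and on the annular region $D_R\setminus\overline{B}$ to make it area-preserving and locally piecewise affine off $\partial B$. The two sides of $\partial B$ can be corrected independently because the corrections will not alter boundary values, and because $\mathrm{Area}(\Omega)=\mathrm{Area}(B)=4$ together with $F_1=\text{translation}$ outside $D_R$ guarantees that source and target already have equal areas on each side.

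For the preliminary extension I would invoke the planar biLipschitz Schoenflies-type theorem of Tukia to obtain a $K_0(L)$-biLipschitz map $F_0:\mathbb{R}^2\to\mathbb{R}^2$ with $F_0|_{\partial B}=\varphi$ and $F_0(B)=\Omega$. Since $\mathrm{diam}(\Omega)\le L\cdot\mathrm{diam}(\partial B)$, the set $\Omega$ lies in a disk $D_{R_0}$ with $R_0=R_0(L)$, and on an annulus $D_R\setminus D_{R_0}$ one can interpolate linearly between $F_0|_{\partial D_{R_0}}$ and a translation $x\mapsto x+c$ on $\partial D_R$, choosing $R\gg R_0$ so that the biLipschitz constant remains bounded by some $K_1=K_1(L)$. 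This produces $F_1$.

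The key ingredient is then an \emph{area-correction lemma}: if $g:D\to D'$ is a $K$-biLipschitz homeomorphism between planar Jordan domains of equal area with biLipschitz boundary data, then there is a $K'$-biLipschitz, area-preserving, locally piecewise affine homeomorphism $g^{*}:D\to D'$ with $g^{*}=g$ on $\partial D$ and $K'$ depending only on $K$ and on the boundary biLipschitz constant. Applied to $F_1|_B$ and to the restriction of $F_1$ to the annular region $\overline{D_R}\setminus\mathrm{int}\,B$, this yields the desired global extension after gluing along $\partial B$.

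The main obstacle is the area-correction lemma. I would implement it by first choosing quasi-regular triangulations of $D$ and $D'$ (triangles with angles and side-lengths bounded below relative to a common scale), approximating $g$ by the piecewise affine map $\tilde g$ that sends source vertices to target vertices, and then redistributing the area errors $\delta_\tau=\mathrm{Area}(\tilde g(\tau))-\mathrm{Area}(\tau)$, which sum to zero, by perturbing interior vertex positions. Local area adjustment between two adjacent triangles is achieved by small shifts of their shared vertex along a direction parallel to the common edge, and a global discrepancy can be transported along paths (for example, in a spanning tree of the dual graph) and cancelled in pairs. Keeping the resulting map uniformly biLipschitz requires (i) a sufficiently fine initial mesh so that each $\delta_\tau$ is small relative to $\mathrm{Area}(\tau)$, (ii) short transport paths, and (iii) a careful bound on the cumulative perturbation of each interior vertex so the triangulation does not degenerate. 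Matching the interior and exterior corrections across $\partial B$ requires that their triangulations share the same vertex set on $\partial B$.
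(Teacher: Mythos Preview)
Your overall architecture---extend via Tukia, then correct the area on each side of $\partial B$ separately---matches the paper's decomposition into Propositions~\ref{square-extension-lemma} and~\ref{extension-to-complement-lemma}. The gap is in your ``area-correction lemma,'' which carries essentially all the content of the theorem, and your sketched proof of it does not go through.

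The fatal step is condition (i): you claim that refining the mesh makes $\delta_\tau$ small relative to $\mathrm{Area}(\tau)$. But $\delta_\tau/\mathrm{Area}(\tau)=|\det A_\tau|-1$, where $A_\tau$ is the linear part of $\tilde g$ on $\tau$, and for a merely $K$-biLipschitz $g$ this ratio is only bounded (by roughly $K^2-1$) and need not tend to zero under refinement: $g$ has no regularity beyond Lipschitz, and its Jacobian can oscillate at every scale. With $\delta_\tau$ comparable to $\mathrm{Area}(\tau)$, the vertex shifts needed to cancel a single error are comparable to the triangle diameter, the accumulated shift along a spanning-tree path is uncontrolled, the triangulation degenerates, and no uniform $K'$ survives. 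The introduction of the paper discusses exactly this obstruction: Burago--Kleiner shows that prescribed Jacobian for bounded densities is in general unsolvable by biLipschitz maps, the thin-rectangle example of Figure~\ref{counterexample} shows that area-preserving corrections on thin pieces blow up the constant, and the author explicitly reports having abandoned a Daneri--Pratelli style triangulation scheme for these reasons. What makes the paper's argument work is a structural property of Tukia's extension far stronger than biLipschitzness: on the self-similar tiling of Figure~\ref{tiling} the restrictions $\Phi\circ q_j$ fall into \emph{finitely many similarity classes} (Lemma~\ref{Tukia-theorem}), so after retiling $B$ into pieces $\widetilde Q_j$ of the prescribed areas with uniformly biLipschitz charts (Lemma~\ref{half-theorem}, resting on the Main Lemma~\ref{main-lemma}) only finitely many fixed area-preserving piecewise affine fillings are needed (Lemma~\ref{areafication}), and the constants are automatically uniform.
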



The author is interested in Theorem~\ref{main-theorem} due to its applications in contact topology. Suppose that a diffeomorphism $\varphi:\mathbb R^2\to\mathbb R^2$ preserves area. Then $\varphi^{*}(xdy) - xdy$ is a closed form thus there exists a function $f:\mathbb R^2\to \mathbb R$ such that $df = \varphi^{*}(xdy) - xdy.$ Then a mapping

$$
(x,y,z) \mapsto (\varphi(x,y), z - f(x,y))
$$
preserves a 1-form $dz + xdy.$ A 1-form $\alpha$ on a 3-manifold is called contact if $\alpha\wedge d\alpha\neq 0$ everywhere, so $dz + xdy$ is a contact form on $\mathbb R^3.$ The distribution $\ker \alpha$ is called a contact structure. Diffeomorphisms preserving the contact structure are called contact or contactomorphisms. Generalizing this example to biLipschitz setting L.\,Capogna and P.\,Tang proved in~\cite[Theorem 5.3]{CapoTang} that such lifting of a biLipschitz homeomorphism is quasiconformal for Carnot-Carath\'eodory metric on the Heisenberg group. Using the same lifting in Section~\ref{contact-section} we construct a standard tubular neighborhood of any Legendrian Lavrentiev link.

\begin{coro}
\label{contact-corollary}
Let $M$ be a contact 3-manifold, $\iota:\mathbb S^1\to M$ be a biLipschitz parametrization of a Legendrian Lavrentiev curve $L = \iota(\mathbb S^1).$ Then there exist $\delta>0,$ a neighborhood $U$ of $L$ and a contact locally biLipschitz homeomorphism $\Phi:(-\delta;\delta)\times\mathbb S^1\times\mathbb R\to U$ such that $\Phi(0,y,0) = \iota(y)$ for any $y\in\mathbb S^1,$ where the manifold $(-\delta;\delta)\times\mathbb S^1\times\mathbb R$ is equipped with a contact structure $\ker(dz + xdy).$
\end{coro}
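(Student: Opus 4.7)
The plan is to pull the problem back via Darboux charts to the standard contact space $(\mathbb R^3,\ker(dz+x\,dy))$ and then apply Theorem~\ref{main-theorem} together with the area-preserving lifting recalled in the paragraph preceding the corollary. First, by the smooth Darboux theorem I cover $L$ by finitely many open sets $W_1,\dots,W_n$ in $M$ with smooth (hence locally biLipschitz) contactomorphisms $\psi_i:W_i\to V_i\subset\mathbb R^3$ normalising the contact form to $dz+x\,dy.$ The image $\psi_i(L\cap W_i)$ is a Legendrian Lavrentiev arc $\gamma_i$ whose Lagrangian projection $\bar\gamma_i$ to the $(x,y)$-plane is itself biLipschitz: the Legendrian identity $z'=-xy'$ gives $|z(s)-z(t)|\le C|y(s)-y(t)|,$ which combined with the Lavrentiev lower bound $|\iota(s)-\iota(t)|\ge c|s-t|$ yields a two-sided estimate for $\bar\gamma_i.$

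For each $i$ I complete $\bar\gamma_i$ to the boundary of a planar domain of area $4$ by appending an auxiliary biLipschitz arc, and after an affine area-preserving rearrangement identifying a fixed side of $B$ with a reference segment $\{0\}\times I_i\subset\{x=0\},$ I apply Theorem~\ref{main-theorem} to obtain an area-preserving biLipschitz map $\varphi_i:\mathbb R^2\to\mathbb R^2$ carrying $\{0\}\times I_i$ onto $\bar\gamma_i.$ The lifting formula from the introduction produces a contact biLipschitz map $\Phi_i(x,y,z)=(\varphi_i(x,y),\,z-f_i(x,y))$ with $df_i=\varphi_i^{*}(x\,dy)-x\,dy,$ and since $\Phi_i^{*}(dz+x\,dy)=dz+x\,dy,$ it takes Legendrian lifts to Legendrian lifts; in particular it sends the Legendrian lift of $\{0\}\times I_i$ (namely the segment $\{x=0,\,z=0\}$) onto $\gamma_i$ up to a global translation in $z.$ Consequently $\psi_i^{-1}\circ\Phi_i$ is a contact locally biLipschitz parametrisation of a tubular piece of $L$ by a standard box $(-\delta_i;\delta_i)\times I_i\times(-\varepsilon_i;\varepsilon_i).$

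To assemble these local parametrisations into a single $\Phi$ on $(-\delta;\delta)\times\mathbb S^1\times\mathbb R,$ I would glue them inductively around $\mathbb S^1$: on each overlap the two charts send $\{x=0,\,z=0\}$ to the same arc of $L,$ so they differ by a contact biLipschitz self-map of a neighbourhood of a $y$-axis segment, which by the same lifting correspondence is the lift of an area-preserving biLipschitz self-map of the plane fixing that segment. A further application of Theorem~\ref{main-theorem} with boundary data matching the transition on one side and equal to the identity on the other modifies this self-map so that it equals the identity outside a slightly larger box, allowing successive charts to be reglued without spoiling the biLipschitz constants; after going once around $\mathbb S^1,$ a last application absorbs the residual holonomy into the final chart. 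The main obstacle is precisely this gluing step, because biLipschitz contactomorphisms do not admit a partition-of-unity calculus; what makes the induction feasible is exactly the strong form of Theorem~\ref{main-theorem}---uniform biLipschitz constants depending only on $L,$ local piecewise affinity away from $\partial B,$ and agreement with a translation outside a fixed disk---which guarantees that each patching has uniformly bounded support and uniformly bounded distortion.
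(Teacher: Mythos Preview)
Your approach is genuinely different from the paper's, and the gluing step has a real gap.

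The paper does \emph{not} patch local Darboux charts. Instead it invokes \cite[Proposition~5.26]{Pra} to obtain, in one stroke, a neighborhood $U$ of $L$ contactomorphic to $S\times\mathbb R$ with contact form $dz+\mathrm{pr}^*\beta$, where $S$ is a smooth annulus and the projection $\mathrm{pr}$ is \emph{injective} on $L$. A short Darboux-type lemma normalises $\beta$ to $x\,dy$ in global annular coordinates. Then a \emph{single} application of Theorem~\ref{the-technical-lemma-without-parameters} (the doubly relative version, identity near both boundary components) produces one area-preserving biLipschitz self-map of the annulus sending the core circle $\{x=0\}$ to the projected knot $\mathrm{pr}(L)$; its contact lift is the desired $\Phi$. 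No gluing is needed.

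Your inductive gluing, by contrast, is not justified. First, Theorem~\ref{main-theorem} yields an extension equal to a \emph{translation} outside a disk, not the identity; to interpolate a transition map to the identity on a box you would need the relative Theorems~\ref{annulus-version} or~\ref{the-technical-lemma-without-parameters}, and even those take as input a map of $\partial B$, not an already-given area-preserving self-map to be damped to the identity. Second, the Lagrangian projection of a Legendrian arc can self-intersect, so ``completing $\bar\gamma_i$ to the boundary of a domain of area~4'' is not automatic; you would need to shrink charts further and control this uniformly. Third, the ``residual holonomy'' after going once around $\mathbb S^1$ is an area-preserving biLipschitz germ near a segment that you must kill while remaining contact and biLipschitz; you give no mechanism for this, and none of the theorems in the paper is stated in a form that directly accomplishes it. The paper's global-annulus strategy sidesteps all three issues at once.
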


In~\cite{Tuk} P.\,Tukia proved that any biLipschitz mapping from $\partial B$ to $\mathbb R^2$ can be extended to a biLipschitz self-homeomorphism of the whole plane (see~\cite{DanPrat, JerKen, Tuk2} and~\cite[Theorem 7.9]{Pomm} for other proofs). So to prove Theorem~\ref{main-theorem} it is sufficient to construct a biLipschitz homeomorphism of the plane identical on $\partial B$ whose Jacobian is the same as the Jacobian of a map constructed by Tukia. Essentially we follow this approach but there is an obstacle. If $f$ is a Jacobian of a biLipschitz mapping, then $f\in L^{\infty}(\mathbb R^2)$ and $1/f \in L^{\infty}(\mathbb R^2).$ For instance, if $\Phi(x,y) = (2x, y)$ if $x\ge 0$ and $\Phi(x,y) = (x,y)$ if $x<0$, then $\Phi$ is biLipschitz and $\det D\Phi=2$ if $x>0$ and $\det D\Phi=1$ if $x<0.$ However in~\cite{BurKlei} D.\,Burago and B.\,Kleiner constructed a continuous function on the square with values in $[1;1+\varepsilon]$ with arbitrarily small $\varepsilon > 0$ which can not be a Jacobian of a biLipschitz mapping. Similar result is due to C.\,McMullen who constructed a function with only two values, see~\cite{McM}.

We use an important feature of Tukia's mapping. Consider a tiling of the square $B$ into quadrilaterals as in Figure~\ref{tiling}. Images of these quadrilaterals under his mapping constitute a finite set of similarity classes of polygons, see Theorem~\ref{Tukia-theorem}. This fact allows us to modify Tukia's mapping on each element of the tiling preserving the mapping on the boundary of the quadrilateral so that the new mapping has constant Jacobian on the quadrilateral. For functions which are constant on each element of the tiling we were able to solve the prescribed Jacobian problem.

\begin{prop}
\label{jacobian-problem}
Let a square be dissected into quadrilaterals as in Figure~\ref{tiling}. Then for any positive $C$ there exists $L$ such that for any function $f$ on the square such that
\begin{itemize}
\item $f$ is constant on each quadrilateral,
\item $1/C \le f \le C$ everywhere
\end{itemize}
there exists a locally affine $L$-biLipschitz mapping from the square to itself whose Jacobian equals $f$ almost everywhere.
\end{prop}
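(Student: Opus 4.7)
Denote the tiles by $Q_1,\dots,Q_N$ and write $c_i := f|_{Q_i}\in[1/C,C]$. The statement implicitly assumes the compatibility $\sum_i c_i|Q_i|=|B|$, which is forced by the map being a surjection of the square onto itself (and will hold in the intended application, where the $c_i$ are the average Jacobians of Tukia's map). My plan is to build the mapping in two stages: first construct a combinatorially equivalent deformed tiling $\{\tilde Q_i\}$ of $B$ with $|\tilde Q_i|=c_i|Q_i|$, then map each $Q_i$ onto $\tilde Q_i$ piecewise affinely with constant Jacobian $c_i$ in such a way that the pieces glue continuously across shared edges.

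For the first stage I would exploit the hierarchical (nested/dyadic) structure of the tiling in Figure~\ref{tiling}. Proceed scale by scale, moving only interior vertices of the mesh: at each level of the hierarchy, first determine the target area of the region enclosed by each curve of the next finer level, then redistribute areas among the tiles inside that region. At each scale this reduces to a one-dimensional area-balancing problem (choose how far along an edge or arc the moving vertex is placed), solvable because all target areas lie between $1/C$ and $C$ times the original ones and sum correctly. Vertex displacements at each scale are bounded by a fixed fraction of the tile diameter at that scale, so the deformed tiling $\{\tilde Q_i\}$ has uniformly bounded aspect ratio and valence, with constants depending only on $C$.

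For the second stage, given $Q_i$ and $\tilde Q_i$ of equal area up to the factor $c_i$, subdivide each quadrilateral into four triangles meeting at an interior vertex (say the centroid of $Q_i$). The interior vertex of $\tilde Q_i$ has two degrees of freedom and the four triangle-area ratios must all equal $c_i$; since they are constrained by $\sum|\tilde T_j|=c_i\sum|T_j|$, only two independent ratio equations remain, and the implicit function theorem (applied to the bounded-geometry family) shows that the required interior vertex can be chosen uniquely inside $\tilde Q_i$ with a uniform lower bound on its distance to the boundary. The affine map on each triangle then has Jacobian exactly $c_i$, and adjacent affine pieces agree on the shared edges because each edge is traversed by a single affine map on either side.

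The global biLipschitz constant depends only on $C$ because all triangles involved in both $Q_i$ and $\tilde Q_i$ have aspect ratios controlled by the construction, and an affine map between triangles of bounded eccentricity with Jacobian in $[1/C,C]$ is biLipschitz with constant depending only on $C$. I expect the main obstacle to be the first stage: arranging the vertex displacements \emph{globally and simultaneously} so that every tile's area is hit exactly while keeping the deformed mesh nondegenerate. The hierarchical structure of the tiling is crucial, because it reduces this otherwise coupled nonlinear problem to a tree of one-dimensional balancing problems, each with a quantitatively controlled solution under the hypothesis $1/C\le f\le C$.
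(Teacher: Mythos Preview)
Your two-stage outline matches the paper's architecture, but Stage~2 contains a counting error that makes the argument fail. Four triangle-area equalities $|\tilde T_j|=c_i|T_j|$ subject to the single relation $\sum|\tilde T_j|=c_i\sum|T_j|$ leave \emph{three} independent affine equations for the two coordinates of $\tilde O_i$, not two. Concretely: take $Q_i=[0,1]^2$ with $O_i=(\tfrac12,\tfrac12)$ (so each $|T_j|=\tfrac14$), and let $\tilde Q_i$ have vertices $(0,0),(1,0),(2,1),(0,1)$ (area $\tfrac32$, so $c_i=\tfrac32$). The equation $|\tilde O\tilde A\tilde B|=\tfrac38$ forces $y=\tfrac34$, while $|\tilde O\tilde C\tilde D|=\tfrac38$ forces $y=\tfrac58$; there is no $\tilde O_i$. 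The paper avoids this by never mapping $Q_j$ directly to $\widetilde Q_j$: it factors through a reference square, $Q_j\xrightarrow{(q_j')^{-1}}\lambda_j B\xrightarrow{\alpha}\widetilde\lambda_j B\xrightarrow{\widetilde q_j}\widetilde Q_j$, where $q_j'$ has constant Jacobian (this works because each $Q_j$ is a trapezoid and the four-triangle trick of Lemma~\ref{trapezoid-lemma} succeeds only thanks to that symmetry) and $\widetilde q_j$ is area-preserving but genuinely piecewise affine, not a four-triangle map.

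There is a second gap you have not confronted. In the tiling of Figure~\ref{tiling} each quadrilateral in annulus $i$ shares its outer edge with $2^m$ quadrilaterals in annulus $i{+}1$. Your Stage~2 map is affine on that whole edge from the inside but piecewise affine (with $2^m$ pieces) from the outside; for these to agree, the $2^m{-}1$ subdivision vertices in the deformed tiling must land at the \emph{affine} images of the original subdivision points, i.e.\ at prescribed barycentric positions on the deformed edge. Your Stage~1, which moves interior vertices freely to hit area targets, does not enforce this, and once you do enforce it you lose exactly the degrees of freedom you were counting on to balance the areas at the next level. The paper's solution is to absorb all of this into the design of the maps $\widetilde q_j$: the compatibility condition~\ref{combinatorial-equivalence} of Lemma~\ref{half-theorem} is precisely the statement that the boundary parametrizations match after rescaling, and the construction of the polygons $\widetilde Q_j$ (via the graphs $f_{N,k}$ of Lemma~\ref{main-lemma}) is considerably more delicate than a scale-by-scale vertex displacement. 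Your Stage~1 sketch is in the right spirit but substantially underestimates the work; your Stage~2 as written does not go through.
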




\begin{figure}[h]
\center
\includegraphics{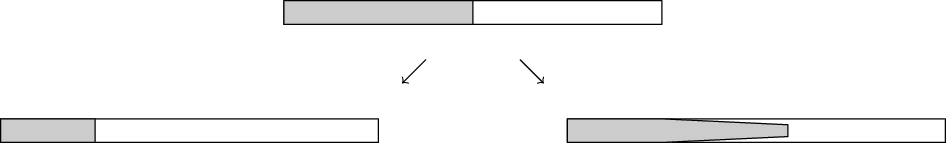}
\caption{Images of halves of the rectangle under a 2-biLipschitz mapping on the left and under an area preserving mapping on the right. Two mappings coincide on the boundary.}
\label{counterexample}
\end{figure}

When one defines a biLipschitz mapping first on some 1-dimensional subcomplex on the plane and then on each component of its complement requiring that the Jacobian is constant, it is important that pieces are not thin. For instance, consider a rectangle $[0;1]\times[0;h]$ and its self-mapping $x\mapsto \frac{1}2x$ if $x\le \frac12$ and $x\mapsto \frac32x -\frac12$ if $x\ge\frac12,$ $y\mapsto y$ (Figure~\ref{counterexample}). This mapping is 2-biLipschitz. Consider the restriction of this mapping to the boundary. Any its area preserving extension maps a middle vertical line to a curve of length greater than $\frac12,$ therefore the extension can not be $1/2h$-biLipschitz. One can also check that any $L$-biLipschitz self-homeomorphism of the boundary of the rectangle can be extended to a $\widetilde L$-biLipschitz self-homeomorphism of the rectangle, where the constant $\widetilde L$ depends only on $L.$

This issue arose when the author tried to adopt a proof by S.\,Daneri and A.\,Pratelli (see~\cite{DanPrat}). If the given mapping $\phi:\partial B\to\mathbb R^2$ is piecewise linear they construct a piecewise affine extension $B\to\mathbb R^2.$ They use a triangulation of the polygon bounded by $\phi(\partial B)$ such that any vertex of the triangulation lies on the boundary of the polygon. But for any $L$ one can construct an $L$-biLipschitz piecewise linear mapping $\phi$ such that any such triangulation contains arbitrarily thin triangles (for example, if some concave arc in $\phi(\partial B)$ consists of sufficiently small edges). Thin triangles make hard to apply the method of~\cite{DanPrat} to constructing area preserving extensions.


We also prove two relative versions of Theorem~\ref{main-theorem}. Let $r\cdot B$ denote a square $[-r;r]^2\subset\mathbb R^2.$

\begin{theo}
\label{annulus-version}
For any $L,$ $r_0$ and $r_1$ such that $0<r_0<r_1<1$ there exist $R = R(L)$ and $K = K(L, r_0, r_1)$ such that any $L$-biLipschitz mapping $\phi:\partial B\to\mathbb R^2$ such that $\phi(\partial B)$ bounds a domain of total area $4$ containing $r_1\cdot B$
can be extended to locally piecewise affine in the complement to $\partial B$ area preserving  $K$-biLipschitz mapping $\Phi:\mathbb R^2\to\mathbb R^2$ such that $\Phi(z) = z$ if $|z|\ge R$ or $|z|\le r_0.$ 
\end{theo}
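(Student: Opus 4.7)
The plan is to start from the extension produced by Theorem~\ref{main-theorem} and then make two corrections: one to turn the translation at infinity into the identity, and one to impose the identity on $r_0\cdot B$. First apply Theorem~\ref{main-theorem} to $\phi$ to obtain a locally piecewise affine, area preserving $K_0$-biLipschitz map $\Phi_0:\mathbb R^2\to\mathbb R^2$ with $\Phi_0|_{\partial B}=\phi$ and $\Phi_0(z)=z+v$ for $|z|\ge R_0$, where $K_0$, $R_0$ and $|v|$ depend only on $L$.

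\emph{Outer correction.} Pick $R_1=R_1(L)$ large enough that $\Phi_0(\overline B)\subset\{|z|\le R_1\}$, and construct a locally piecewise affine, area preserving biLipschitz self-map $\Omega$ of $\mathbb R^2$ that is the identity on $\{|z|\le R_1\}$ and equals $z\mapsto z-v$ outside $\{|z|\le R\}$ for some $R=R(L)$. Such an $\Omega$ can be obtained as a piecewise affine version of the time-$1$ flow of a Hamiltonian that is constant on the inner disk and linear with symplectic gradient $-v$ outside the outer disk; its biLipschitz constant depends only on $|v|$ and the width of the transition annulus. The composition $\Phi_1:=\Omega\circ\Phi_0$ agrees with $\Phi_0$ on $\overline B$ and is the identity for $|z|\ge R$.

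\emph{Inner correction.} This is the heart of the argument: replace $\Phi_1|_B$ with an area preserving, locally piecewise affine $K$-biLipschitz map $\Psi:B\to D$, where $D$ is the domain bounded by $\phi(\partial B)$, satisfying $\Psi|_{\partial B}=\phi$ and $\Psi|_{r_0\cdot B}=\mathrm{id}$. Because $r_1\cdot B\subset D$, the buffer annulus $r_1\cdot B\setminus(r_0\cdot B)^\circ$ is contained in both source and target, so I would set $\Psi=\mathrm{id}$ on all of $r_1\cdot B$. It remains to build an area preserving biLipschitz map from the outer annulus $B\setminus(r_1\cdot B)^\circ$ onto $D\setminus(r_1\cdot B)^\circ$ equal to $\phi$ on $\partial B$ and to the identity on $r_1\cdot\partial B$; both have area $4-4r_1^2$. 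For this annular subproblem I would follow the blueprint of Theorem~\ref{main-theorem}: cut each annulus along a Lipschitz arc joining its two boundary components to obtain two topological quadrilaterals with matching biLipschitz boundary data, apply a Tukia-type extension on the cut disks, and use Proposition~\ref{jacobian-problem} on the Tukia tiling to correct the Jacobian to the constant~$1$.

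The main obstacle is this inner correction, particularly the choice of the cut arc in $D\setminus(r_1\cdot B)^\circ$ and the propagation of constants through the reduction. The hypothesis $r_0<r_1$ is exactly what makes the constants controllable: since $\phi(\partial B)$ bounds a domain containing all of $r_1\cdot B$, the two boundary components of $D\setminus(r_1\cdot B)^\circ$ are separated by Euclidean distance at least $1-r_1$, so a cut arc can be chosen with length and biLipschitz constant depending only on $L$ and $r_1$. Without such a buffer the cut arc could be squeezed arbitrarily thin between $\phi(\partial B)$ and $r_1\cdot\partial B$, forcing the final biLipschitz constant to blow up.
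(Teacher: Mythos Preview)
Your outer correction is fine and is essentially what the paper does (it uses Lemma~\ref{square-travel} rather than a Hamiltonian flow to absorb the translation). The real gap is in the inner correction.

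The claim that the boundary components of $D\setminus(r_1\cdot B)^\circ$ are separated by at least $1-r_1$ is false: the hypothesis $r_1\cdot B\subset D$ only says that $\phi(\partial B)$ lies outside the open square $(r_1\cdot B)^\circ$, and for any fixed $L$ there are admissible $L$-biLipschitz $\phi$ with $\phi(\partial B)$ within $\varepsilon$ of $r_1\cdot\partial B$ for arbitrarily small $\varepsilon$ (let the image dip inward near one point and bulge out elsewhere to keep the enclosed area equal to $4$). Worse, this destroys the whole scheme of setting $\Psi=\mathrm{id}$ on $r_1\cdot B$: if $q\in\partial B$ and $p'\in r_1\cdot\partial B$ satisfy $|\phi(q)-p'|<\varepsilon$, then any $\Psi$ with $\Psi|_{\partial B}=\phi$ and $\Psi|_{r_1\cdot\partial B}=\mathrm{id}$ obeys
\[
\mathrm{bilip}(\Psi)\ \ge\ \frac{|q-p'|}{|\Psi(q)-\Psi(p')|}\ \ge\ \frac{1-r_1}{\varepsilon},
\]
so no bound in terms of $(L,r_0,r_1)$ alone is possible. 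You have over-constrained the problem: the theorem only asks for the identity on $r_0\cdot B$, and the relevant separation is $\mathrm{dist}\bigl(\phi(\partial B),\,r_0\cdot\partial B\bigr)\ge r_1-r_0$, which is what actually makes the constants controllable.

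The paper's inner correction (Proposition~\ref{extension-to-annulus-lemma}) does not cut an annulus at all. It keeps the Tukia tiling of the full square $B$, merges enough annular layers (Lemma~\ref{merging-elements-of-tiling-lemma}) so that both the central tile $\widetilde Q_0$ and its image contain a translate of $\tfrac{r_0+r_1}{2}\cdot B$, and then replaces the map only on that single central tile by one equal to the identity on $r_0\cdot B$, via Lemmas~\ref{square-travel} and~\ref{square-to-rectangle}. The outer tiles and their images are left untouched, so no cut arc and no separate annular Tukia extension are needed.
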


In Theorem~\ref{annulus-version} we can't expect that $K$ depends only on $L$ for the reason similar to the one considered in the example with a thin rectangle in Figure~\ref{counterexample}. 

\begin{theo}
\label{the-technical-lemma-without-parameters}
For any real numbers $L,$ $r_0,$ $r_1,$ $R_0,$ $R_1$ such that $0 < r_0 < r_1 < 1 < R_0 < R_1$ there exists $K$ such that any $L$-biLipschitz mapping $\phi:\partial B\to\mathbb R^2$ such that 
\begin{itemize}
\item $\phi(\partial B)$ bounds a domain of total area $4$ containing $r_1\cdot B;$
\item $\phi(\partial B)\subset R_1\cdot B$
\end{itemize}
can be extended to a mapping $\Phi:\mathbb R^2\to\mathbb R^2$ such that
\begin{itemize} 
\item it is locally piecewise affine in the complement to $\partial B$, area preserving and $K$-biLipschitz;
\item $\Phi(z) = z$ if $|z|\ge R_0$ or $|z|\le r_0.$ 
\end{itemize}
\end{theo}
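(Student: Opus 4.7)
The plan is to deduce Theorem~\ref{the-technical-lemma-without-parameters} from Theorem~\ref{annulus-version} by a compression step that shrinks the identity region from the radius $R(L)$ supplied by that theorem down to the prescribed radius $R_0$. First apply Theorem~\ref{annulus-version} to $\phi$ with parameters $L,r_0,r_1$, producing an intermediate extension $\Phi_0:\mathbb R^2\to\mathbb R^2$ that is area preserving, locally piecewise affine off $\partial B$, $K_0$-biLipschitz with $K_0=K_0(L,r_0,r_1)$, and equal to the identity on $\{|z|\le r_0\}\cup\{|z|\ge R(L)\}$. When $R(L)\le R_0$ we take $\Phi=\Phi_0$; the nontrivial case is $R(L)>R_0$.

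In that case the additional hypothesis $\phi(\partial B)\subset R_1\cdot B$ is used to exhibit an annular region $\mathcal A$ sandwiched between $\phi(\partial B)$ and $\{|z|=R(L)\}$ and containing the circle $\{|z|=R_0\}$ in its interior. On $\mathcal A$ the map $\Phi_0$ is an area-preserving biLipschitz self-homeomorphism that is the identity on the outer boundary $\{|z|=R(L)\}$. The next step is to construct an area-preserving biLipschitz self-map $\tau:\mathbb R^2\to\mathbb R^2$ that is the identity outside $\{|z|\ge R(L)\}$, fixes $\phi(\partial B)$ pointwise (so that replacing $\Phi_0$ by $\tau\circ\Phi_0$ does not alter the boundary values on $\partial B$), and such that $\tau\circ\Phi_0$ equals the identity on $\{|z|\ge R_0\}$. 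Setting $\Phi=\tau\circ\Phi_0$ then gives the required extension.

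The construction of $\tau$ is itself an area-preserving Schoenflies-type problem on the annulus $\mathcal A$, where the boundary values on the inner and outer components of $\partial\mathcal A$ are dictated by $\Phi_0^{-1}$ and by the identity, respectively. I would build it via the same toolkit as in the proof of Theorem~\ref{main-theorem}: apply Tukia's extension theorem to the boundary parametrization to obtain a biLipschitz (not yet area preserving) interpolation on $\mathcal A$, and then correct its Jacobian to be identically one using Proposition~\ref{jacobian-problem} on a suitable tiling of $\mathcal A$ modelled on the one in Figure~\ref{tiling}. Since an annulus is conformally a cylinder and thus a union of squares, both tools transfer readily from the disk setting to the annular setting.

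The main obstacle is controlling the biLipschitz constant of $\tau$ (and hence $K$) in terms of the parameters $L,r_0,r_1,R_0,R_1$. When the annulus $\mathcal A$ is thin, the Jacobian correction must absorb the full area discrepancy in a narrow strip, and the constants coming from Proposition~\ref{jacobian-problem} degrade accordingly, as the rectangle counterexample in Figure~\ref{counterexample} warns. Bookkeeping this dependence explicitly through both Tukia's theorem and Proposition~\ref{jacobian-problem} is the technical heart of the argument, and accounts for $K$ depending on all five parameters rather than only on $L$.
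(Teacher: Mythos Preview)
Your route is a genuine detour, and the part you leave as a sketch is where all the work hides.

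The paper proves Theorem~\ref{the-technical-lemma-without-parameters} in one line by gluing two pieces that are already available: Proposition~\ref{extension-to-annulus-lemma} gives an area-preserving extension to the \emph{interior} of $B$ that is the identity on $r_0\cdot B$, and Lemma~\ref{infinity-extension-lemma} gives an area-preserving extension to the \emph{exterior} of $B$ that is the identity outside $R_0\cdot B$. Both come with constants depending only on the stated parameters, and they agree on $\partial B$ by construction, so the glued map is the desired $\Phi$. No post-hoc ``compression'' is needed, because Lemma~\ref{infinity-extension-lemma} already lands on the prescribed outer radius $R_0$; its proof does this by an area-preserving inversion $\delta$ of a square annulus that swaps the two boundary components, turning the exterior problem into an instance of the interior problem (Proposition~\ref{extension-to-annulus-lemma}) and then inverting back.

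Your plan instead invokes Theorem~\ref{annulus-version} to get $\Phi_0$ with identity outside $\{|z|\ge R(L)\}$ and then tries to manufacture a correction $\tau$. But Theorem~\ref{annulus-version} is itself proved from Proposition~\ref{extension-to-annulus-lemma} and Proposition~\ref{extension-to-complement-lemma} (which in turn rests on Lemma~\ref{infinity-extension-lemma}), so you are using the needed tool only to discard its output and rebuild it. Your sketch of $\tau$ also contains an error: $\Phi_0$ is \emph{not} a self-homeomorphism of $\mathcal A$; it maps the annulus between $\partial B$ and $\{|z|=R(L)\}$ onto $\mathcal A$, which is a different region. What $\tau$ must actually do is interpolate, area-preservingly and with controlled biLipschitz constant, between the identity on $\phi(\partial B)$ and $\Phi_0^{-1}$ on the curve $\Phi_0(\{|z|=R_0\})$, while also being the identity on the whole bounded component (otherwise you destroy the $r_0\cdot B$ condition). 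Saying ``apply Tukia's theorem and Proposition~\ref{jacobian-problem} on the annulus'' is not a construction: both tools are stated for the square, and adapting them to an annulus with one boundary component the non-PL curve $\phi(\partial B)$, with quantitative control, is precisely the content of Lemma~\ref{infinity-extension-lemma}. You have identified the right obstacle (the thin-annulus degeneration), but not supplied the mechanism---the $\delta$-inversion trick---that overcomes it.
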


\section*{Acknowledgments}
I'm grateful to Sergei Kuksin, Valerii Pchelintsev and Alexander Tyulenev for the confirmation that the result is new. I would like to thank Aleksandr Berdnikov for discussing the prescribed Jacobian problem and for pointing out some inaccuracies in the preliminary version of the paper. Special thanks to Ivan Dynnikov and Sergei Kuksin for continuous support and for the interest to this work.

\section{Some auxiliary area preserving piecewise affine mappings}
Let $X, Y$ be metric spaces. A mapping $f:X\to Y$ is called {\it $L$-Lipschitz} if for any $x,y\in X$
$$
\mathrm{dist}(f(x), f(y)) \le L\cdot\mathrm{dist}(x,y).
$$

By {\it a Lipschitz constant} of $f$ we call
$$\mathrm{lip}(f) = \sup\limits_{x\neq y\in X}\frac{\mathrm{dist}(f(x),f(y))}{\mathrm{dist}(x,y)}.$$

The real number
$$\mathrm{bilip}(f) = \max\left(\mathrm{lip}(f), \mathrm{lip}\left(f^{-1}\right)\right)$$ 
is called {\it a biLipschitz constant} of~$f$.

A mapping $f:X\to Y$ is called locally ($L$-)(bi)Lipschitz, if there exists an open cover $X = \cup_{\alpha} U_{\alpha}$ such that for any $\alpha$ the restriction $f\big|_{U_{\alpha}}$ is ($L$-)(bi)Lipschitz.

Let $\Omega$ be open in $\mathbb R^n.$ By Rademacher Theorem any locally Lipschitz mapping $f:\Omega\to\mathbb R^m$ is differentiable almost everywhere. Its differential $Df$ coincides almost everywhere with some measurable bounded mapping $\Omega \to \mathrm{Mat}_{m\times n}$ (see~\cite[Section 4]{Hein}). If $M$ is an essential supremum of the operator norm of the matrix $Df$, the mapping $f$ is locally $M$-Lipschitz, where the operator norm of a matrix $A$ is
$$||A|| = \sup\limits_{|v|=1}|Av|.$$

To estimate the operator norm we will use Euclidean norm
$$|A| = \sqrt{\mathrm{tr} \left(A^T A\right)}$$
and an inequality $||A|| \le |A|$ (the inequality is strict if $A$ is not degenerate).

A domain $\Omega\subset \mathbb R^n$ is called {\it $C$-quasiconvex} if for any two points $x,y\in\Omega$ there exists a path connecting them whose length is not greater than $C\cdot\mathrm{dist}(x,y).$

If a mapping $f:\Omega\to\mathbb R^m$ is locally $L$-Lipschitz and the domain $\Omega$ is convex, $f$ is $L$-Lipschitz. If $\Omega$ is not convex, we will use the following statement to estimate the Lipschitz constant (see~\cite[Lemma 2.2]{Hein}):

\begin{lemm}
Let $f:\Omega\to\mathbb R^m$ be locally $L$-Lipschitz and $\Omega$ be $C$-quasiconvex. Then $f$ is $CL$-Lipschitz.
\end{lemm}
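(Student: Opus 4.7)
The plan is the standard length argument. Given arbitrary $x, y \in \Omega$, I would first invoke $C$-quasiconvexity to obtain a rectifiable path $\gamma:[0,1]\to\Omega$ with $\gamma(0)=x$, $\gamma(1)=y$, and length $\ell(\gamma)\le C\cdot\mathrm{dist}(x,y)$. The goal is then to bound $\mathrm{dist}(f(x),f(y))$ by $L\cdot\ell(\gamma)$, which immediately yields the claim.

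Next I would use the local Lipschitz hypothesis: there is an open cover $\Omega=\bigcup_\alpha U_\alpha$ with each $f|_{U_\alpha}$ being $L$-Lipschitz. The image $\gamma([0,1])$ is compact, so by the Lebesgue number lemma applied to the pullback cover $\{\gamma^{-1}(U_\alpha)\}$ of $[0,1]$, I can pick a partition $0=t_0<t_1<\cdots<t_n=1$ such that each subarc $\gamma([t_{i-1},t_i])$ lies entirely in some $U_{\alpha_i}$.

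Then I would combine the triangle inequality with the local Lipschitz bound on each piece:
\[
\mathrm{dist}(f(x),f(y)) \le \sum_{i=1}^n \mathrm{dist}\bigl(f(\gamma(t_{i-1})),f(\gamma(t_i))\bigr) \le L\sum_{i=1}^n \mathrm{dist}(\gamma(t_{i-1}),\gamma(t_i)) \le L\cdot\ell(\gamma),
\]
since the sum of chord lengths along a partition never exceeds the total length of a rectifiable curve. Together with $\ell(\gamma)\le C\cdot\mathrm{dist}(x,y)$, this gives $\mathrm{dist}(f(x),f(y))\le CL\cdot\mathrm{dist}(x,y)$, as required.

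There is no real obstacle here; the only point worth stating carefully is the appeal to the Lebesgue number lemma to ensure the partition is fine enough that every subarc lies in a single neighborhood of local $L$-Lipschitz control. Everything else is the triangle inequality and the standard fact that piecewise chord sums underestimate arc length.
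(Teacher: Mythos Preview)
Your argument is correct and is the standard one. The paper itself does not prove this lemma; it simply states it and cites \cite[Lemma 2.2]{Hein}, so there is nothing to compare against beyond noting that your proof is exactly the expected length argument (quasiconvex path, Lebesgue number lemma to partition into pieces where the local $L$-Lipschitz bound applies, triangle inequality, chord sums bounded by arc length).
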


If $f$ is biLipschitz and $\det Df = 1$ almost everywhere, then $f$ preserves area. If $\det A = 1$ and $||A||\le L,$ then $||A^{-1}|| \le L.$ Indeed, let $v,w$ be an orthonormal basis, then $1 = \mathrm{Area}(Av, Aw) \le |Av|\cdot|Aw| \le |Av|\cdot L.$ So $|Av|\ge L^{-1}.$ Therefore if an area preserving mapping is $L$-Lipschitz, then it is $L$-biLipschitz.

\begin{lemm}
\label{skew-lemma}
Let $f(x)$ be an $L$-Lipschitz function. Then a mapping
$$
\Phi: (x,y) \mapsto (x, y + f(x))
$$
is $\sqrt{2+L^2}$-biLipschitz and preserves area. If $f$ is piecewise linear, $\Phi$ is piecewise affine.
\end{lemm}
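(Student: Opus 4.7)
The plan is to compute the differential $D\Phi$ where it exists, apply the Euclidean-norm estimate on the operator norm already recalled in the excerpt, and conclude by convexity of $\mathbb{R}^2$ and the area-preservation criterion just stated.

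First I would observe that $\Phi$ is locally Lipschitz because both coordinates of $\Phi$ are Lipschitz functions of $(x,y)$, so by Rademacher's Theorem $\Phi$ is differentiable almost everywhere. At any point where $f$ is differentiable, the differential is
$$
D\Phi(x,y) = \begin{pmatrix} 1 & 0 \\ f'(x) & 1 \end{pmatrix}.
$$
Its determinant equals $1$, so $\det D\Phi = 1$ almost everywhere; by the remark preceding the lemma this already implies that $\Phi$ preserves area.

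Next I would bound the operator norm of $D\Phi$ using the Euclidean-norm inequality $\|A\| \le |A|$ recalled in the excerpt:
$$
\|D\Phi(x,y)\| \le |D\Phi(x,y)| = \sqrt{1 + f'(x)^2 + 1} = \sqrt{2 + f'(x)^2} \le \sqrt{2 + L^2},
$$
since $|f'| \le L$ almost everywhere. Hence $\Phi$ is locally $\sqrt{2+L^2}$-Lipschitz, and because its domain $\mathbb{R}^2$ is convex, it is in fact $\sqrt{2+L^2}$-Lipschitz globally. Invoking once more the remark before the lemma (an area-preserving $L$-Lipschitz map is automatically $L$-biLipschitz, since $\det D\Phi = 1$ forces $\|(D\Phi)^{-1}\| \le \|D\Phi\|$), we conclude that $\Phi$ is $\sqrt{2+L^2}$-biLipschitz.

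Finally, for the piecewise-affine claim: if $f$ is piecewise linear, let $\{I_j\}$ be the intervals on which $f$ is linear. On each vertical strip $I_j \times \mathbb{R}$ the mapping $\Phi$ is affine, and each strip can be subdivided into a locally finite family of triangles (say by horizontal cuts at integer heights plus one diagonal per resulting rectangle). On every triangle $\Phi$ restricts to an affine map, giving the required piecewise-affine structure. There is no real obstacle here; the only thing worth double-checking is the numerical Frobenius bound, which matches $\sqrt{2+L^2}$ exactly.
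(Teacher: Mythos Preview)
Your proof is correct and follows exactly the paper's own argument: compute the Jacobi matrix, read off $\det D\Phi = 1$ and the Frobenius bound $|D\Phi| \le \sqrt{2+L^2}$, then invoke the preceding remarks about convex domains and area-preserving maps. The only difference is that you spell out more details (Rademacher, convexity, the triangulation for the piecewise-affine claim) that the paper leaves implicit.
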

\begin{proof}
Jacobi matrix of $\Phi$  is
$$
D\Phi = 
\begin{pmatrix}
1 & 0\\
f'(x) & 1
\end{pmatrix},
$$
hence $||D\Phi|| < |D\Phi| = \sqrt{2 + \left(f'\right)^2} \le \sqrt{2+L^2}$ and $\det D\Phi = 1.$
\end{proof}

\begin{figure}[h]
\center
\includegraphics{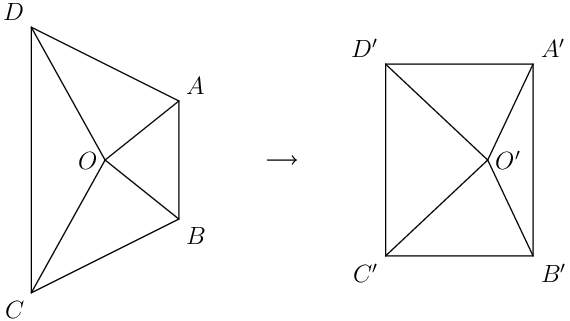}
\caption{The image of a trapezoid under a piecewise affine area preserving mapping is a rectangle with the same height.}
\label{trapezoid-to-rectangle}
\end{figure}

\begin{lemm}
\label{trapezoid-lemma}
There exists a function $L(x,y)$ monotonic on each argument such that if
\begin{enumerate}[label=\arabic*)]
\item
$ABCD$ is a trapezoid with bases $AB$ and $CD,$ $AB = a,$ $CD = b,$ a height of the trapezoid equals $h$;
\item $a<b,$ $2h \le a+b$ and the angles on the base $CD$ are not obtuse;
\item
the area of a rectangle $A'B'C'D'$ equals the area of the trapezoid and $B'C' = h,$
\end{enumerate}
then there exists an area preserving piecewise affine $L((b-a)/h, b/a)$-biLipschitz mapping from the trapezoid to the rectangle which maps the vertices $A,B,C,D$ respectively to the vertices $A',B',C',D'$ and whose restriction on each side of the trapezoid is affine.
\end{lemm}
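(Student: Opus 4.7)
The plan is to put the trapezoid into coordinates with $D=(0,0)$, $C=(b,0)$, $A=(p,h)$, $B=(p+a,h)$, where the non-obtuseness of the angles at $C$ and $D$ is equivalent to $0\le p\le b-a$. The target rectangle is taken as $D'=(0,0)$, $C'=\bigl((a+b)/2,0\bigr)$, $B'=\bigl((a+b)/2,h\bigr)$, $A'=(0,h)$; any other rectangle with the required dimensions is obtained by composing with an isometry at the end. I then look for a single interior ``fan center'' $P$ of the trapezoid and its image $P'$ in the rectangle, chosen so that the four fan triangles $PAB$, $PBC$, $PCD$, $PDA$ and $P'A'B'$, $P'B'C'$, $P'C'D'$, $P'D'A'$ have the same areas pair by pair. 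The top/bottom area equations give the unique solution $y_P=h/2$ and $y_{P'}=bh/(a+b)$, and equating the two side areas then forces $P=\bigl((a+b+2p)/4,\,h/2\bigr)$ (the midpoint of the horizontal section of the trapezoid at height $h/2$) together with $P'=\bigl((a+b)/4,\,bh/(a+b)\bigr)$; the constraints $0\le p\le b-a$ and $a<b$ place both points strictly inside their regions.

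Next I define the extension to be the unique affine map on each fan triangle that sends the named vertices to the correspondingly primed vertices. Continuity across the shared spokes $PA$, $PB$, $PC$, $PD$ is automatic, since an affine map on a segment is determined by its endpoints and both triangles meeting along a spoke send its two endpoints to the same primed points. Because the source and target triangles have matching signed areas, the linear part of each affine piece has Jacobian determinant $+1$, so the map preserves area. The restriction to each of the four sides $AB$, $BC$, $CD$, $DA$ is affine by construction, as each such segment is a side of exactly one fan triangle.

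The real work is the biLipschitz estimate. I compute the $2\times2$ matrix of the linear part of each of the four affine pieces by solving a small linear system and bound its Frobenius norm. The matrices for $PAB$ and $PCD$ come out upper-triangular with diagonal entries $\tfrac{a+b}{2a},\tfrac{2a}{a+b}$ and $\tfrac{a+b}{2b},\tfrac{2b}{a+b}$, and off-diagonal entries of size at most $(a+b)(b-a)/(4ah)$ and $(a+b)(b-a)/(4bh)$ respectively; these are all controlled by $b/a$ and $(b-a)/h$. The matrices for the side triangles $PBC$ and $PDA$ take the form $\bigl(\begin{smallmatrix}1 & m_{12}\\ m_{21}& m_{22}\end{smallmatrix}\bigr)$ with $|m_{12}|\le(b-a)/h$, $m_{21}=\pm 2(b-a)h/(a+b)^2$ and $|m_{22}|\le 1$; the hypothesis $2h\le a+b$ is used precisely here to get $|m_{21}|\le (b-a)/(a+b)\le 1$. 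Since each determinant is $1$, the inverse of every piece has the same Frobenius norm as the piece itself, and all entries are bounded by monotone functions of $(b-a)/h$ and $b/a$. Because both the trapezoid and the rectangle are convex, the piecewise Lipschitz estimate upgrades to a global biLipschitz bound equal to the maximum of the four piecewise constants, from which one reads off the required monotonic function $L\bigl((b-a)/h,\,b/a\bigr)$. The main obstacle is the side-triangle estimate: without $2h\le a+b$ the entry $m_{21}=\pm 2(b-a)h/(a+b)^2$ would blow up as $h\to\infty$, so both conditions of item~(2) are essential.
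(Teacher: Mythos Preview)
Your proof is correct and uses the same core construction as the paper: a four-triangle fan from a single interior point, with the image point chosen so that corresponding triangles have equal areas, followed by explicit Jacobian bounds. The only difference is tactical: the paper first applies the shear of Lemma~\ref{skew-lemma} to reduce to the isosceles case (so effectively $p=(b-a)/2$) and then fans from the center $O$, whereas you fan directly in the general trapezoid, carrying the parameter $p\in[0,b-a]$ through the matrix computations. Your choice $P=\bigl((a+b+2p)/4,\,h/2\bigr)$ specializes to the paper's $O$ when $p=(b-a)/2$, and your $P'$ coincides with the paper's $O'$; your side-triangle matrix reduces to the paper's when $p=(b-a)/2$ after swapping axes. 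Your route saves one composition and hence a multiplicative factor in the final constant; the paper's route yields slightly cleaner formulas by exploiting the symmetry. Both arguments use the hypothesis $2h\le a+b$ at exactly the same spot, to control the entry $2h(b-a)/(a+b)^2$.
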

\begin{proof}
We assume that the bases are parallel to $y$-axis. Apply a mapping from Lemma~\ref{skew-lemma} to make the trapezoid isosceles. A function $f$ from Lemma~\ref{skew-lemma} is $\frac{b-a}{2h}$-Lipschitz because the angles on the longer base are not obtuse. Further we will assume that the trapezoid is isosceles from the beginning.

The areas of the trapezoid and the rectangle are equal so $A'B' = \frac{a+b}2.$

Let $M$ and $N$ be the middle points of the bases $AB$ and $CD$ respectively, $O$ be the middle point of $MN$. Let $M'$ and $N'$ be the middle points of the sides $A'B'$ and $C'D'$ respectively, $O'$ be the point on the segment $M'N'$ dividing it in the ratio $a:b$ (Figure~\ref{trapezoid-to-rectangle}). Then
$$
S_{\triangle OAB} = \frac12\cdot\frac{h}2\cdot a,\ 
S_{\triangle OCD} = \frac12\cdot\frac{h}2\cdot b,\ 
S_{\triangle O'A'B'} = \frac12h\cdot\frac{a}{a+b}\cdot\frac{a+b}2,\ 
S_{\triangle O'C'D'} = \frac12h\cdot\frac{b}{a+b}\cdot\frac{a+b}2.
$$

We see that $S_{\triangle OAB} = S_{\triangle O'A'B'}$ and $S_{\triangle OCD} = S_{\triangle O'C'D'}.$ Since the areas of the trapezoid and of the rectangle are equal, it follows that
$S_{\triangle OAD} = S_{\triangle O'A'D'}$ and $S_{\triangle OBC} = S_{\triangle O'B'C'}.$

Consider a mapping $\Phi$ which maps the points $O,A,B,C,D$ respectively to the points $O', A',B',$ $C',$ $D'$ and whose restriction to the triangles $\triangle OAB, \triangle OBC, \triangle OCD, \triangle ODA$ is affine.

Since the areas of the corresponding triangles are equal, the mapping preserves area.

We will assume that $O$ is the origin of the coordinate system. Let the axis $x$ be directed from the longer base of the trapezoid to the shorter one, and $y(A)>0.$ We may assume that the center of the rectangle is $O,$ the side $A'B'$ is parallel to the $y$-axis, $x(A')>0$ and $y(A')>0.$

Then on the triangle $\triangle OAB$ the Jacobi matrix equals $\begin{pmatrix}\frac{2a}{a+b} & 0\\ 0& \frac{a+b}{2a}\end{pmatrix}.$ So the mapping is $\frac{a+b}{2a}$-biLipschitz.

Similarly on the triangle $\triangle OCD$ the mapping is $\frac{2b}{a+b}$-biLipschitz.

We compute the Jacobi matrix on the triangle $\triangle OAD.$ 

$$
\overrightarrow{OA} = \left(\frac{h}2, \frac{a}2\right),\ 
\overrightarrow{OD} = \left(-\frac{h}2, \frac{b}2\right),\ 
\overrightarrow{O'A'} = \left(h\cdot\frac{a}{a+b}, \frac{a+b}4\right),\ 
\overrightarrow{O'D'} = \left(-h\cdot\frac{b}{a+b}, \frac{a+b}4\right),
$$

\begin{multline*}
D\Phi = 
\begin{pmatrix}
h\cdot\frac{a}{a+b} & -h\cdot\frac{b}{a+b}\\
\frac{a+b}4 & \frac{a+b}4
\end{pmatrix}
\begin{pmatrix}
\frac{h}2 & -\frac{h}2\\
\frac{a}2 & \frac{b}2
\end{pmatrix}^{-1}
=
\frac{2}{h(a+b)}
\begin{pmatrix}
h\cdot\frac{a}{a+b} & -h\cdot\frac{b}{a+b}\\
\frac{a+b}4 & \frac{a+b}4
\end{pmatrix}
\begin{pmatrix}
b & h\\
-a & h
\end{pmatrix}
= \\ =
\frac{2}{h(a+b)}
\begin{pmatrix}
\frac{2hab}{a+b} & h^2\cdot\frac{a-b}{a+b}\\
\frac{b^2-a^2}4 & h\cdot\frac{a+b}2
\end{pmatrix}
=
\begin{pmatrix}
\frac{4ab}{(a+b)^2} & 2h\cdot\frac{a-b}{(a+b)^2}\\
\frac{b-a}{2h} & 1
\end{pmatrix}.
\end{multline*}

$$
||D\Phi||^2 < |D\Phi|^2 = \left(\frac{4ab}{(a+b)^2}\right)^2 + \left(\frac{b-a}{2h}\right)^2 + \left(2h\cdot\frac{a-b}{(a+b)^2}\right)^2 + 1 \le 3 + \left(\frac{b-a}{2h}\right)^2.
$$

So the biLipschitz constant of the mapping $\Phi$ is at most
$$
\max\left(
2\cdot\frac{b}{a},
\sqrt{3 + \left(\frac{b-a}{2h}\right)^2}
\right).
$$

\end{proof}

\begin{defi}
If a trapezoid satisfies conditions 1) and 2) of Lemma~\ref{trapezoid-lemma}, we call a pair $((b-a)/h, b/a)$ a {\it distortion} of the trapezoid. We say that a pair $(x_0, y_0)$ is not greater than a pair $(x_1, y_1)$ if $x_0\le x_1$ and $y_0\le y_1.$
\end{defi}

\begin{figure}[h]
\center
\includegraphics{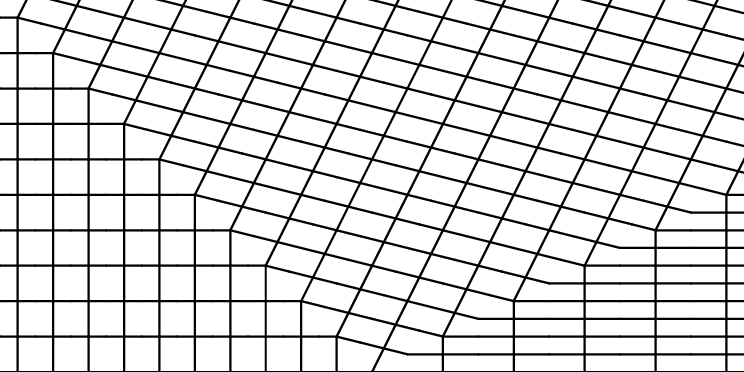}
\caption{The image of the coordinate upper half-grid under an area preserving mapping. The restriction of the mapping on the $y=0:$ $x \mapsto 2x$ if $x\ge 0$ and $x\mapsto x$ if $x<0$.}
\label{line-to-line-figure}
\end{figure}

Let $\phi,\psi:\mathbb R\to\mathbb R$ be $L$-Lipschitz mappings such that $\phi\circ\psi = \mathrm{id}.$ We define functions

\begin{equation}
\label{line-to-line}
x(\widetilde x, y) = \frac{\phi(\widetilde x + y) + \phi(\widetilde x - y)}2,\quad
\widetilde y(\widetilde x, y) =  \frac{\phi(\widetilde x + y) - \phi(\widetilde x - y)}2.
\end{equation}

Since $\phi$ is monotonic, a function $x(\widetilde x, y)$ is monotonic on $\widetilde x.$ Therefore we can define the inverse function $\widetilde x(x, y).$ Then we define a function $\widetilde y (x,y) = \widetilde y(\widetilde x(x, y), y)$ (see an example on Figure~\ref{line-to-line-figure}).

\begin{prop}
\label{from-line-to-plane}
A mapping $\Psi:(x, y)\mapsto (\widetilde x(x, y), \widetilde y(x, y))$ is $2L^3$-biLipschitz area preserving self-homeomorphism of the plane such that $\Psi(x, 0) = \psi(x)$ for any $x.$ If $\psi$ is piecewise linear, $\Psi$ is piecewise affine.
\end{prop}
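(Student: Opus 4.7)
The plan is to pass to characteristic coordinates $\alpha = \widetilde x + y$, $\beta = \widetilde x - y$, in which the defining relations~(\ref{line-to-line}) take the symmetric form $x = \tfrac12(\phi(\alpha)+\phi(\beta))$ and $\widetilde y = \tfrac12(\phi(\alpha)-\phi(\beta))$. The identity $\phi\circ\psi = \mathrm{id}$ together with both mappings being $L$-Lipschitz forces $\phi$ to be an $L$-biLipschitz bijection of $\mathbb R$, with $\phi'\in[1/L,L]$ almost everywhere and $\psi=\phi^{-1}$. In particular $\widetilde x\mapsto x(\widetilde x,y)$ is strictly monotonic, which justifies the well-definedness of $\widetilde x(x,y)$ and hence of $\Psi$. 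Setting $y=0$ makes $\alpha=\beta=\widetilde x$, giving $x=\phi(\widetilde x)$ and $\widetilde y=0$, so $\Psi(x,0)=(\psi(x),0)$. Swapping $\phi\leftrightarrow\psi$ in the construction produces an explicit two-sided inverse, so $\Psi$ is a bijection of the plane.

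Next I would compute $D\Psi$ by implicit differentiation. Writing $a=\phi'(\widetilde x+y)$ and $b=\phi'(\widetilde x-y)$ (each in $[1/L,L]$ a.e.), differentiating $x=\tfrac12(\phi(\widetilde x+y)+\phi(\widetilde x-y))$ and $\widetilde y=\tfrac12(\phi(\widetilde x+y)-\phi(\widetilde x-y))$ and eliminating $\partial\widetilde x/\partial(x,y)$ yields
$$D\Psi=\begin{pmatrix}\dfrac{2}{a+b} & \dfrac{b-a}{a+b}\\ \dfrac{a-b}{a+b} & \dfrac{2ab}{a+b}\end{pmatrix}.$$
A direct expansion gives $\det D\Psi=\dfrac{4ab+(a-b)^2}{(a+b)^2}=1$, so $\Psi$ preserves area almost everywhere, hence (being Lipschitz, so absolutely continuous) preserves area of measurable sets. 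For the Lipschitz bound, use the crude entrywise estimates $\tfrac{2}{a+b}\le L$, $|\tfrac{a-b}{a+b}|\le 1$, and $\tfrac{2ab}{a+b}\le L^3$ (the last from $a+b\ge 2/L$ and $ab\le L^2$); together they give $|D\Psi|\le 2L^3$. Since $\det D\Psi=1$, the remark preceding Lemma~\ref{skew-lemma} upgrades this to a $2L^3$-biLipschitz estimate for $\Psi$.

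For the piecewise-affine claim, if $\psi$ (equivalently $\phi$) is piecewise linear with breakpoints $\{c_i\}$, then both $\phi(\widetilde x+y)$ and $\phi(\widetilde x-y)$ are affine on each cell of the partition of the plane cut out by the two families of parallel lines $\widetilde x+y=c_i$ and $\widetilde x-y=c_j$. Consequently the two maps $(\widetilde x,y)\mapsto(x,y)$ and $(\widetilde x,y)\mapsto(\widetilde x,\widetilde y)$ are piecewise affine on this common polygonal partition, and composing the inverse of the first with the second realizes $\Psi$ as piecewise affine on the push-forward partition of the $(x,y)$-plane.

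The one genuinely technical step is the implicit-function computation of $D\Psi$ together with the algebraic identity $\det D\Psi=1$; the rest—injectivity, the biLipschitz estimate, area preservation, and the piecewise-affine structure—follows from the $(\alpha,\beta)$-symmetry of the construction and the elementary geometry of the two characteristic foliations.
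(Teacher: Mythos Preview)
Your proof is correct and follows essentially the same route as the paper: compute $D\Psi$ by implicit differentiation of the relations~(\ref{line-to-line}), verify $\det D\Psi=1$, and bound the Frobenius norm by $2L^3$. Your use of the characteristic coordinates $\alpha=\widetilde x+y$, $\beta=\widetilde x-y$ is a pleasant conceptual addition---it makes the symmetry $\phi\leftrightarrow\psi$ (and hence the bijectivity and the piecewise-affine structure) transparent---but the analytic core (the Jacobian computation and the entrywise estimates) is identical to the paper's; your off-diagonal bound $|(a-b)/(a+b)|\le 1$ is in fact sharper than the paper's $L^2$, though both land at $|D\Psi|\le 2L^3$.
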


\begin{proof}
The equality $\Psi(x, 0) = \psi(x)$ follows from the fact that $x(\widetilde x, 0) = \phi(x)$ and $\phi\circ\psi = \mathrm{id}.$

Now we show that $\Psi$ is a bijection. We fix a point $(\widetilde x_0, \widetilde y_0)$ and consider the equation $\widetilde y_0 = \widetilde y(\widetilde x_0, y)$ on $y.$ It has a unique solution because $\phi$ is strictly monotonic continuous and $\lim\limits_{t\to\pm\infty} \phi(t)= \pm\infty$. Let $y_0$ be the solution. Then $(\widetilde x_0, \widetilde y_0)\mapsto (x_0, y_0)$ is the inverse mapping, where $x_0 = x(\widetilde x_0, y_0).$

Then we compute $D\Psi$:

$$
\begin{aligned}
\frac{\partial \widetilde x}{\partial x} =& \left(\frac{\partial x}{\partial \widetilde x}\right)^{-1} = \frac2{\phi'(\widetilde x + y) + \phi'(\widetilde x - y)},
\\
\frac{\partial \widetilde x}{\partial y} =& -\frac{\partial x (\widetilde x, y)}{\partial y}\cdot\left(\frac{\partial x (\widetilde x, y)}{\partial \widetilde x} \right)^{-1} = - \frac{\phi'(\widetilde x + y) - \phi'(\widetilde x - y)}{\phi'(\widetilde x + y) + \phi'(\widetilde x - y)},\\
\frac{\partial \widetilde y}{\partial x} =& \frac{\partial \widetilde y(\widetilde x, y)}{\partial \widetilde x}\cdot \frac{\partial \widetilde x}{\partial x} = \frac{\phi'(\widetilde x + y) - \phi'(\widetilde x - y)}{\phi'(\widetilde x + y) + \phi'(\widetilde x - y)},\\
\frac{\partial \widetilde y}{\partial y} =& \frac{\partial \widetilde y(\widetilde x, y)}{\partial \widetilde x}\cdot \frac{\partial \widetilde x}{\partial y} + \frac{\partial \widetilde y(\widetilde x, y)}{\partial y}=
-\frac12 \frac{\left(\phi'(\widetilde x + y) - \phi'(\widetilde x - y)\right)^2}{\phi'(\widetilde x + y) + \phi'(\widetilde x - y)} + \frac{\phi'(\widetilde x + y) + \phi'(\widetilde x - y)}2 = \\
=& \frac{2\phi'(\widetilde x + y)\phi'(\widetilde x - y)}{\phi'(\widetilde x + y) + \phi'(\widetilde x - y)}.
\end{aligned}
$$

Since $\phi$ is $L$-biLipschitz, $L^{-1} \le |\phi'|\le L$ and $\phi'$ is of the same sign almost everywhere. Therefore
$$
||D\Psi||^2 < |D\Psi|^2 \le L^2 + L^4 + L^4 + L^6 \le \left(2L^3\right)^2,
$$

So $\Psi$ is locally $2 L^3$-biLipschitz. Since the image is the whole plane, so it is convex, the mapping is $2L^3$-biLipschitz.

It is clear that $\det D\Psi = 1.$
\end{proof}

\begin{ques}
Is it possible to extend any $L$-biLipschitz self-mapping of the line to an area preserving $\left(M\cdot L\right)$-biLipschitz mapping of the plane, where $M$ is some constant not depending on $L$ and on the mapping?
\end{ques}
\begin{proof}[Answer]
Yes, Aleksandr Berdnikov constructed an $L$-bi-Lipschitz area preserving extension in the following way. Suppose that $\phi$ is smooth. Then there exists a time dependent vector field $v(x,t),$ $t\in[0;1]$, on $\mathbb R$ such that its flow at the moment $t=1$ equals $\phi$ and $|\mathrm{div}\, v|\le\ln\mathrm{bilip}(\phi)$ for any $t\in[0;1].$ This vector field can be extended to the whole plane by the formula
$$
v(x,y,t) = \left(\frac{v(x+y,t) + v(x-y,t)}2, \frac{-v(x+y,t) + v(x-y,t)}2 \right).
$$

It is a direct check that $\mathrm{div}\, v = 0$ (as of a vector field on the plane), $\left|\left|\nabla v\right|\right|\le\ln\mathrm{bilip}(\phi)$ and its flow at the moment $t=1$ extends $\phi.$ The case of an arbitrary $\phi$ can be settled by taking a limit.
\end{proof}

\begin{rema}
Since $\frac{\partial \widetilde x}{\partial y} = -\frac{\partial \widetilde y}{\partial x}$, the mapping $\Psi$ from Proposition~\ref{from-line-to-plane} is of the form $(x,y)\mapsto (u_x, -u_y)$, where $u$ is a weak solution to Cauchy problem $u_x\big|_{y=0} = \psi,$ $u_y\big|_{y=0} = 0,$ $u(0,0) = 0$ for the hyperbolic Monge--Amp\`ere equation $u_{xx} u_{yy} - u_{xy}^2 = -1.$ Formula~(\ref{line-to-line}) is a partial case of (4.15) in~\cite{Tun}.
\end{rema}

\begin{lemm}
\label{pl-square-to-graph0}
Let $f$ be a piecewise linear $L$-Lipschitz function on a segment $[0; 1]$. Let $C$ be a positive constant, $C^{-1} \le f(x)\le C$ for any $x\in[0;1].$  Let $S = \int_0^1 f(x) dx.$ Then there exists a $K$-biLipschitz piecewise affine area preserving mapping $\Phi: [0;1]\times [0;S] \to \mathbb R^2,$ where $K$ depends only on $L$ and $C$, such that
\begin{enumerate}[label = \arabic*),ref = \arabic*)]
\item
\label{top-bottom-sides-cond}
$\Phi(x, 0) = (x, 0)$ and $\Phi(x, S) = (x, f(x))$ for any $x\in[0;1]$;
\item
$\Phi(0, y) = \left(0, y\cdot\frac{f(0)}{S}\right)$ and $\Phi(1, y) = \left(1, y\cdot\frac{f(1)}{S}\right)$ for any $y\in[0;S].$
\end{enumerate}
\end{lemm}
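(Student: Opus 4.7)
The plan is to construct $\Phi$ piecewise based on the linear pieces of $f$ and to reduce each piece to Lemma~\ref{trapezoid-lemma}. Denote the breakpoints of $f$ by $0=x_0<x_1<\cdots<x_n=1$, write $w_i=x_{i+1}-x_i$ and $\bar f_i=(f(x_i)+f(x_{i+1}))/2$, and let $T_i$ be the trapezoid under the graph of $f$ over $[x_i,x_{i+1}]$, of area $w_i\bar f_i$.

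The first step is to decompose the source rectangle so that each piece has area matching the corresponding $T_i$. Because $w_iS\neq w_i\bar f_i$ in general, vertical strips do not work; I introduce piecewise linear dividing curves $\gamma_1,\ldots,\gamma_{n-1}$ in the source, with each $\gamma_i$ running from $(x_i,0)$ to $(x_i,S)$ and chosen so that the area to its left equals $F(x_i)=\int_0^{x_i}f$. For instance, $\gamma_i$ can be taken as a single-kink triangular path whose peak offset is forced by the area constraint. Setting $\gamma_0$ and $\gamma_n$ to be the left and right sides of the source, the region $R_i$ between $\gamma_i$ and $\gamma_{i+1}$ has area $w_i\bar f_i=\mathrm{Area}(T_i)$ and shares with the source the bottom segment $[x_i,x_{i+1}]\times\{0\}$ and the top segment $[x_i,x_{i+1}]\times\{S\}$.

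Next, I split each $R_i$ by horizontal cuts at the kink heights of its sides into genuine trapezoids with straight sides, and split each $T_i$ by compatible horizontal cuts so that the resulting pairs of sub-trapezoids have equal areas. Each matched pair is then handled by Lemma~\ref{trapezoid-lemma}: I map the source sub-trapezoid to the canonical Lemma~\ref{trapezoid-lemma} rectangle, then apply Lemma~\ref{trapezoid-lemma} in the reverse direction to map that rectangle onto the target sub-trapezoid, composing if necessary with an area-preserving linear rescaling to reconcile aspect ratios and with a shear from Lemma~\ref{skew-lemma} to straighten a slanted side into a vertical one. The boundary conditions of Lemma~\ref{pl-square-to-graph0}---identity on the bottom, linear on the sides, $(x,S)\mapsto(x,f(x))$ on the top---are enforced by the fact that Lemma~\ref{trapezoid-lemma}'s mappings are affine on every side of every piece, together with the way the pieces are glued along the curves $\gamma_i$ and along the horizontal cuts.

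The main obstacle is twofold: (i) Lemma~\ref{trapezoid-lemma} requires the geometric hypothesis $2h\le a+b$, which may fail for some sub-trapezoids, and (ii) a single-kink $\gamma_i$ has peak displacement of order $|F(x_i)-x_iS|/S$, which can exceed the horizontal room available in $[0,1]\times[0,S]$. Both are overcome by introducing auxiliary breakpoints: subdivide every original linear piece of $f$ into sub-pieces of width at most $c(C)$, so that each sub-trapezoid satisfies Lemma~\ref{trapezoid-lemma}'s hypothesis and each bump in $\gamma_i$ fits inside the source. Since $f\in[1/C,C]$, only $O(C)$ auxiliary breakpoints per original piece are needed; the biLipschitz constants of Lemma~\ref{trapezoid-lemma}, of the rescalings, and of Lemma~\ref{skew-lemma} then all depend only on $L$ and $C$, giving the desired $K$.
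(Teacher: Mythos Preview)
Your plan has two genuine gaps that are not patched by the subdivision trick you propose.

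\textbf{The curves $\gamma_i$ need not be disjoint.} With a single-kink triangular $\gamma_i$ joining $(x_i,0)$ to $(x_i,S)$, the signed peak offset is forced to be $\delta_i = 2\bigl(F(x_i)-x_iS\bigr)/S$, so the kink sits at $x$-coordinate $x_i+\delta_i = 2F(x_i)/S - x_i$. The map $x\mapsto 2F(x)/S - x$ has derivative $2f(x)/S-1$, which is negative wherever $f(x)<S/2$; for generic $f$ with $C>\sqrt 2$ this happens on sets of positive measure, and then consecutive kinks satisfy $x_i+\delta_i > x_{i+1}+\delta_{i+1}$, so $\gamma_i$ and $\gamma_{i+1}$ cross and the region $R_i$ is not even well defined. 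Inserting extra breakpoints does not help: the inequality $2\bar f_i/S>1$ that you need on each sub-piece is independent of the piece width.

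\textbf{The gluing along $\gamma_i$ is not well defined.} Even if the $\gamma_i$ were disjoint, your horizontal cuts in $R_{i-1}$ are at heights $h_{i-1},h_i$ while those in $R_i$ are at $h_i,h_{i+1}$. The two induced piecewise-affine maps $\gamma_i\to\{x_i\}\times[0,f(x_i)]$ therefore have different breakpoints along $\gamma_i$ and different target breakpoints (the cut heights in $T_{i-1}$ versus those in $T_i$), and there is no reason for them to agree. Making them agree would impose a chain of compatibility constraints on the cut heights in all the $T_i$ simultaneously, which you have not addressed.

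The paper avoids both difficulties by separating the problem into two independent pieces. First (Step~1) it uses \emph{vertical} strips in the source of width $\int_{x_k}^{x_{k+1}}f/S$ --- so areas match automatically and Lemma~\ref{trapezoid-lemma} applies directly --- at the cost that the resulting map $\Phi_0$ is only $(\phi(x),0)$ on the bottom for some $C^2$-biLipschitz reparametrization $\phi$ of $[0,1]$. Then (Steps~2--4) it builds a separate area-preserving self-map $\widetilde\Phi$ of the rectangle that equals $\phi$ on top and bottom and is the identity on the sides; this is done by factoring $\phi$ into boundedly many $3/2$-biLipschitz factors and, for each factor, extending it to the plane via Proposition~\ref{from-line-to-plane} applied to a carefully symmetrized periodic extension, followed by explicit skew maps to restore the rectangle. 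The desired $\Phi$ is $\Phi_0\circ\widetilde\Phi^{-1}$.
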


\begin{proof} 
We construct the mapping in four steps.

\smallskip
\noindent{\bf Step 1.} First we construct a mapping $\Phi_0$, which satisfies all conditions of the lemma except condition~\ref{top-bottom-sides-cond}.

Let $0 = x_0 < x_1 < \dots < x_n = 1$ and the function $f$ be linear on each segment $[x_k; x_{k+1}].$ We may assume that $x_{k+1} - x_k < C^{-1}$ for any $k.$

\begin{figure}[h]
\center
\includegraphics{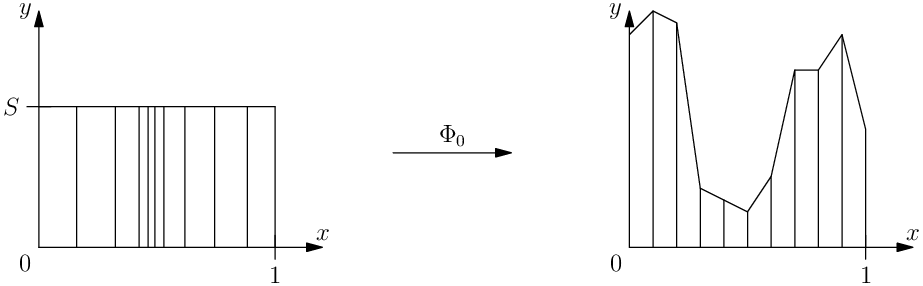}
\caption{Each rectangle is mapped to a trapezoid of the same area.}
\label{graph-step1}
\end{figure}

We dissect the rectangle $[0;1]\times[0;S]$ along vertical lines into rectangles $R_k$ which have sides $\int_{x_k}^{x_{k+1}}f(x)dx / S$ and $S$ (Figure~\ref{graph-step1}). A rectangle $R_k$ under an affine mapping can be mapped to a rectangle with sides $x_{k+1}-x_k$ and $(f(x_k)+f(x_{k+1}))/2$ which by Lemma~\ref{trapezoid-lemma} can be mapped to a trapezoid
$$T_k = \{(x,y):\ x\in[x_k; x_{k+1}],\ 0\le y\le f(x)\}.$$

We denote the composition of these two mapping by $\Phi_0.$ The biLipschitz constant of the affine mapping is at most $C^2.$ Trapezoids are of distortion at most $(L, C^2).$ Hence $\Phi_0$ is locally $L'(L,C)$-biLipschitz. Since the region bounded by the graph of $L$-Lipschitz function and $x$-axis is $(L+1)$-quasiconvex, the mapping $\Phi_0$ is $\left((L+1)\cdot L'(L,C)\right)$-biLipschitz.

Let $\phi = \Phi_0\big|_{[0;1]\times\{0\}}.$ By construction $\phi$  is piecewise linear self-homeomorphism of the segment $[0;1]$, and $C^{-2}\le\phi'(x)\le C^2$ for almost all $x\in[0;1].$ We note that $\Phi_0 (x, 0) = (\phi(x), 0)$ and $\Phi_0(x, S) = (\phi(x), f(\phi(x))).$ Therefore to prove the lemma it remains to construct an area preserving piecewise affine self-homeomorphism $\widetilde\Phi$ of the rectangle $[0;1]\times[0;S]$, whose biLipschitz constant is not greater than some a priori given function on $L$ and $C$, and whose restriction on vertical sides of the rectangle is identity, and the restriction on horizontal sides coincides with $\phi.$ Then the mapping $\Phi_0\circ\widetilde\Phi^{-1}$ will be sought-for.

\smallskip
\noindent{\bf Step 2.} We decompose $\phi = \Phi_0\big|_{[0;1]\times\{0\}}$ into a composition of $3/2$-biLipschitz piecewise linear orientation preserving self-homeomorphisms of the segment $[0;1],$ which number is bounded above by a constant depending only on $C.$

Suppose that the biLipschitz constant of $\phi$ is greater than 3/2.
Let
$$X = \{x\in[0;1]:\ \phi'(x) > 3/2\},\ Y = \{x\in[0;1]:\ \phi'(x) < 1\}.$$ 

Since $\int_0^1 \phi'(x)dx = 1,$ then $\ell(X)\le 2\ell(Y),$ where $\ell(A)$ is a measure of the set $A.$ Let $Z\subset Y$ be a subset of measure $\ell(X)/2$ which is a finite union of segments. Let $\phi_1$ be a homeomorphism of the segment $[0;1]$ such that
$$\phi_1'(x) =
\left\{
\begin{aligned}
7/6,&\ x\in X,\\
2/3,&\ x\in Z,\\
1,&\ \text{in other cases}.
\end{aligned}
\right.
$$

We see that $\phi_1$ is piecewise linear and $3/2$-biLipschitz, $\mathrm{lip}\left(\phi\circ\phi_1^{-1}\right) \le \max(6/7\cdot\mathrm{lip}(\phi), 3/2)$ and $\mathrm{lip}(\phi^{-1})\ge\mathrm{lip}\left(\phi_1\circ\phi^{-1}\right).$ We can repeat this procedure for the mapping $\phi\circ\phi_1^{-1}$ and so on. We will get
$$
\phi = \widetilde\phi\circ\phi_m\dots\circ\phi_2\circ\phi_1,
$$
where $m = \lceil\log_{7/6}\left(\mathrm{lip}(\phi)\cdot 2/3\right)\rceil,$ $\phi_k$ are piecewise linear $3/2$-biLipschitz orientation preserving homeomorphisms, $\widetilde \phi$ is piecewise linear $3/2$-Lipschitz homeomorphism, such that $\mathrm{lip}\left(\widetilde\phi^{-1}\right)\le\mathrm{lip}(\phi^{-1}).$ We denoted by $\lceil x\rceil$ the least possible integer $n$ such that $n\ge x.$

Then we can repeat these arguments for the mapping $\widetilde \phi^{-1}$ and we will get a sought decomposition. On this step we showed that we can assume that $\phi$ is $3/2$-biLipschitz.

\smallskip
\noindent{\bf Step 3.} On this step we reduce the problem to the case $S=2.$ We apply an affine transformation $A,$ which maps the rectangle $[0;1]\times[0;S]$ to the rectangle $[0;1]\times[0;2].$ If $\widetilde\Phi$ is a sought mapping for the rectangle $[0;1]\times[0;2],$ then the mapping $A^{-1}\widetilde\Phi A$ is a self-homeomorphism of $[0;1]\times[0;S]$, it preserves area since $\det A$ is constant, its restriction to horizontal (respectively, vertical) sides is $\phi$ (respectively, affine), and $\mathrm{bilip}\left(A^{-1}\widetilde\Phi A\right)\le \mathrm{bilip}(A)^2\cdot\mathrm{bilip}\left(\widetilde\Phi\right)\le (2C)^2\cdot\mathrm{bilip}\left(\widetilde\Phi\right).$

\smallskip
\noindent{\bf Step 4.}
We construct a self-homeomorphism $\widetilde\Phi$ of the rectangle $[0;1]\times[0;2]$ such that $\widetilde\Phi(x,0) = (\phi(x),0)$ and $\widetilde\Phi(x,2) = (\phi(x),2)$ if $x\in[0;1]$, $\widetilde\Phi(0,y) = (0,y)$ and $\widetilde\Phi(1,y) = (1, y)$ if $y\in[0;2],$ where $\phi$ is a piecewise linear orientation preserving $3/2$-biLipschitz self-homeomorphism of $[0;1].$

First we extend the homeomorphism $\phi$ to the whole line. We define $\phi(x) = \phi(-x) + 2x$ on the segment $[-1;0].$ We have $\phi(-1) = -1,$ $\phi(0) = 0$ and $\phi'(x) = -\phi'(-x) + 2 \ge 1/2$ so $\phi$ is strictly monotonic and continuous. We define $\phi$ on the segment $[2k - 1; 2k + 1],$ where $k$ is nonzero integer, by $\phi(x) = \phi(x - 2k) + 2k.$ We note that for any $x\in\mathbb R$
$$
\phi(x) = \phi(-x) + 2x.
$$

Indeed, on the segment $[-1;1]$ this is true by definition. If $x = x_0 + 2k,$ where $x_0\in[-1;1]$ and $k$ is an integer, then
$$\phi(x) - \phi(-x) = \phi(x_0 + 2k) - \phi(-x_0 -2k) = \phi(x_0) + 2k - \phi(-x_0) + 2k = 2x_0 + 4k = 2x.$$

Now we apply Proposition~\ref{from-line-to-plane}. Let $\Psi:(x,y)\mapsto(\widetilde x,\widetilde y)$ be a mapping defined by~(\ref{line-to-line}). Then
$$
x(\widetilde x, 2) = \frac{\phi(\widetilde x + 2) + \phi(\widetilde x - 2)}2 = \frac{\phi(\widetilde x) + 2 + \phi(\widetilde x) - 2}2 = \phi(\widetilde x) = x(\widetilde x, 0)
$$
and
$$
\widetilde y(\widetilde x, 2) = \frac{\phi(\widetilde x + 2) - \phi(\widetilde x - 2)}2 = \frac{\left(\phi(\widetilde x) + 2\right) - \left(\phi(\widetilde x) - 2\right)}2 = 2.
$$

So $\Psi(x,0) = (\phi^{-1}(x),0)$ and $\Psi(x,2) = (\phi^{-1}(x), 2)$ for any $x\in[0;1].$

We also have
$$
\widetilde y(\widetilde x = 0, y) = \frac{\phi(y) - \phi(-y)}2 = y
$$
and
$$
\widetilde y(\widetilde x = 1, y) = \frac{\phi(1+y) - \phi(1-y)}2 = \frac{2 + \phi(-1+y) - \phi(1-y)}2 = \frac{2 + 2(-1+y)}2 = y.
$$

\begin{figure}
\center
\includegraphics{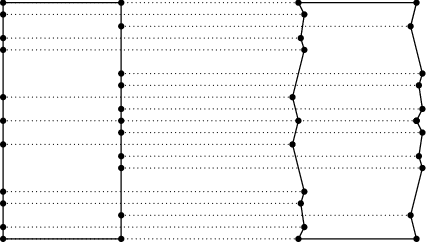}
\caption{The rectangle $[0;1]\times[0;2]$ is on the left, its image under $\Psi^{-1}$ is on the right.}
\label{psi-of-rectangle}
\end{figure}

However $\Psi$ does not map the rectangle $[0;1]\times[0;2]$ to itself in general, since it is possible that $x(0, y)\neq 0$ (Figure~\ref{psi-of-rectangle}). Indeed
$$
x(0,y) = \frac{\phi(y)+\phi(-y)}2 = \phi(y) - y
$$
and
\begin{multline*}
x(1,y) = \frac{\phi(1+y)+\phi(1-y)}2 = \frac{2+\phi(-1+y)+\phi(1-y)}2 = \\
= \frac{2+\phi(-1+y)+ \phi(-1+y) - 2(-1+y)}2 =
\phi(-1+y) + 2 - y = \phi(1+y) - y.
\end{multline*}

The boundary of the polygon $\Psi^{-1}([0;1]\times[0;2])$ consists of two horizontal segments and four more parts isometric to each other. We map this polygon to the rectangle by six skew-mappings from Lemma~\ref{skew-lemma} such that any point preserves its $y$-coordinate and horizontal segments are mapped identically. Let
$$
\overline \Phi_0:
\quad
x \mapsto x,
\quad
y \mapsto
\left\{
\begin{aligned}
y,&\text{ if } x\le 1/4,\\
-2x + y + 1/2,&\text{ if } 1/4 \le x \le 3/4,\\
y - 1,&\text{ if } 3/4 \le x
\end{aligned}
\right.
$$
and
$$
\overline\Phi_1:\quad
x\mapsto
\left\{
\begin{aligned}
x,&\text{ if } y\le 0 \text{ or }1\le y,\\
x + y - \phi(y),&\text{ if } 0\le y\le 1,
\end{aligned}
\right.
\quad
y \mapsto y.
$$

\begin{figure}[h]
\includegraphics{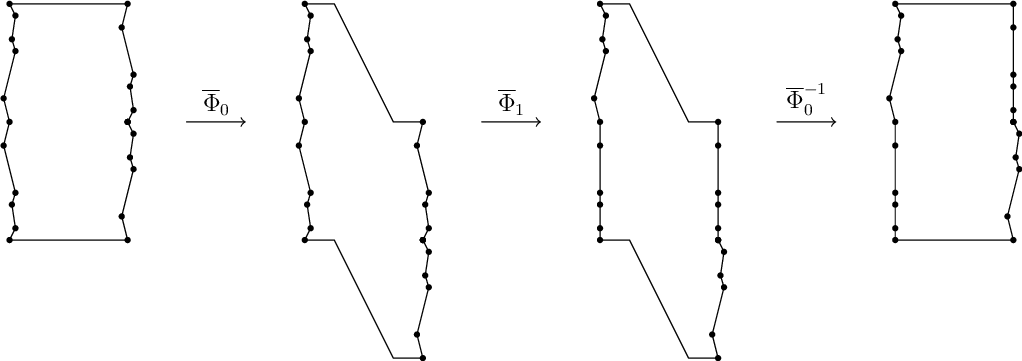}
\caption{Straightening of the polygon $\Psi^{-1}([0;1]\times[0;2])$.}
\label{three-skews}
\end{figure}

We apply $\overline\Phi_0^{-1}\circ\overline\Phi_1\circ\overline\Phi_0$ to the polygon $\Psi^{-1}([0;1]\times[0;2])$ (Figure~\ref{three-skews}). If a horizontal segment on the boundary of the polygon lies inside a region $y\ge 1$ (respectively, $y\le 0$) then its image under $\overline\Phi_0$ lies in the same region. Therefore $\overline\Phi_1$ acts trivially on the images of horizontal segments and the mapping $\overline\Phi_0^{-1}\circ\overline\Phi_1\circ\overline\Phi_0$ acts trivially on horizontal segments. Now let us see how the mapping acts on the left part of the boundary. Since $\phi'(y) - 1 \le 1/2$ for all $y$, $\phi(0) = 0$, $\phi(1) = 1$ and $\phi(2)=2,$ then $\phi(y) - y\le 1/4$ if $0\le y\le 2.$ Therefore $\overline\Phi_0$ acts trivially on the left part of the boundary. The upper half of the left part of the boundary stays fixed under $\overline\Phi_1$. The lower half of the left part of the boundary is mapped to a lower half of the left side of the rectangle while $y$-coordinate of any point does not change. After this $\overline\Phi_0^{-1}$ does nothing. Now let us see how the mapping acts on the right part of the boundary. Since $\phi(1+y) - y=1$ if $y=0,$ $y=1$ or $y=2$, and since $\phi' \ge 1/2$ for all $y$, we have $\phi(1+y)-y\ge3/4$ if $y\in[0;2].$ Therefore the mapping $\overline\Phi_0$ acts on the right part as a parallel translation by a vector $(0,-1).$ So if $y\in[1;2]$
\begin{multline*}
(\phi(1+y) - y, y) \xmapsto{\overline\Phi_0} (\phi(1+y) - y, y - 1) \xmapsto{\overline\Phi_1} (\phi(1+y) - y + \left((y-1) - \phi(y-1)\right), y-1) = \\
= ( \left(2 + \phi(-1 + y)\right) - 1 - \phi(y-1), y-1 ) = (1,y-1) \xmapsto{\overline\Phi_0^{-1}}
(1,y).
\end{multline*}

So the upper half of the right part is mapped to the line $x = 1$ preserving the $y$-coordinate. The lower half stays fixed under $\overline\Phi_0^{-1}\circ\overline\Phi_1\circ\overline\Phi_0$. Similarly we can choose skew-mappings $\overline \Phi_2$ and $\overline \Phi_3$ to straighten the remaining part of the boundary of the polygon so that the mapping $\overline\Phi_2^{-1}\circ\overline\Phi_3\circ\overline\Phi_2\circ\overline\Phi_0^{-1}\circ\overline\Phi_1\circ\overline\Phi_0\circ\Psi^{-1}$ is sought-for.
\end{proof}

We denote a homothety $z\mapsto \lambda z$ by $\alpha_{\lambda}.$ The following lemma is a reformulation of the previous one.

\begin{lemm}
\label{square-to-graph}
Let $f$ be a piecewise linear $L$-Lipschitz function on a segment $[a; b]$. Let $h$, $C_1$ and $C_2$ be positive constants, $h/C_1 \le f(x)\le h\cdot C_1$ for any $x\in[a;b],$ $h/C_2\le b-a \le C_2\cdot h.$ Let $S = \int_a^b f(x) dx.$ Then there exists a $K$-biLipschitz piecewise affine mapping $\Phi: [a;b]\times [0;h] \to \mathbb R^2,$ where $K$ depends only on $L,$ $C_1$ and $C_2$, such that
\begin{enumerate}[label = \arabic*),ref = \arabic*)]
\item
a composition $\Phi\circ\alpha_{\sqrt{\frac{S}{(b-a)h}}}$ preserves area;
\item
$\Phi(x, 0) = (x, 0)$ and $\Phi(x, h) = (x, f(x))$ for any $x\in[a;b]$;
\item
$\Phi(a, y) = \left(a, y\cdot\frac{f(a)}{h}\right)$ and $\Phi(b, y) = \left(b, y\cdot\frac{f(b)}{h}\right)$ for any $y\in[0;h].$
\end{enumerate}
\end{lemm}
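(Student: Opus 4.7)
My approach is to reduce to Lemma~\ref{pl-square-to-graph0} by an affine change of variables that normalises the rectangle $[a;b]\times[0;h]$ to a unit-width strip. Set $\ell=b-a$ and define $\widetilde f:[0;1]\to\mathbb R$ by $\widetilde f(u)=f(a+\ell u)/h$. The hypothesis $\ell\le C_2 h$ gives that $\widetilde f$ is piecewise linear and $(LC_2)$-Lipschitz, the bound $h/C_1\le f\le hC_1$ becomes $1/C_1\le\widetilde f\le C_1$, and $\widetilde S:=\int_0^1\widetilde f(u)\,du=S/(\ell h)$ lies in $[1/C_1,C_1]$. Applying Lemma~\ref{pl-square-to-graph0} to $\widetilde f$ yields a $K'$-biLipschitz area-preserving piecewise affine map $\widetilde\Phi:[0;1]\times[0;\widetilde S]\to\mathbb R^2$ with $K'$ depending only on $L,C_1,C_2$, and satisfying the normalised versions of conditions 2) and 3).

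Next I introduce the two diagonal affine maps $A(x,y)=\bigl((x-a)/\ell,\,y\widetilde S/h\bigr)$ and $B(u,v)=(a+\ell u,\,hv)$, and set $\Phi=B\circ\widetilde\Phi\circ A$. The composition is piecewise affine, and conditions 2) and 3) follow from those of $\widetilde\Phi$ by direct substitution; for example $\Phi(x,h)=B\bigl(\widetilde\Phi((x-a)/\ell,\widetilde S)\bigr)=B\bigl((x-a)/\ell,\widetilde f((x-a)/\ell)\bigr)=(x,f(x))$, and analogously on the other three sides. The Jacobian is constant: $\det D\Phi=\det DB\cdot\det DA=\widetilde S=S/((b-a)h)$, so $\Phi$ composed with a homothety of the appropriate ratio is area-preserving, giving condition 1).

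The main thing to verify is the biLipschitz bound. The maps $A$ and $B$ separately have biLipschitz constants that depend on the unbounded scales $\ell$ and $h$, so a bare product estimate fails. However, writing $D\widetilde\Phi=(a_{ij})$, a direct calculation gives
$$
D\Phi = DB\cdot D\widetilde\Phi\cdot DA = \begin{pmatrix} a_{11} & a_{12}\cdot S/h^2\\ a_{21}\cdot h/\ell & a_{22}\cdot\widetilde S\end{pmatrix},
$$
and the cross-scale factors $S/h^2$, $h/\ell$ and $\widetilde S$ each lie in a compact subinterval of $(0,\infty)$ determined by $C_1$ and $C_2$. Hence the entries of $D\Phi$ and those of $(D\Phi)^{-1}$ (whose determinant $\widetilde S$ is bounded away from zero) are uniformly controlled, yielding $\mathrm{bilip}(\Phi)\le K(L,C_1,C_2)$.

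The only nontrivial observation is this cancellation of anisotropic stretches in the conjugation $B\circ\widetilde\Phi\circ A$, which relies essentially on the hypothesis $\ell\asymp h$; once it is noted, the rest of the argument is bookkeeping.
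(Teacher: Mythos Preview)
Your proof is correct and matches the paper's approach: the paper gives no explicit proof of Lemma~\ref{square-to-graph}, stating only that it ``is a reformulation of the previous one'' (Lemma~\ref{pl-square-to-graph0}), and your affine change of variables is precisely that reformulation carried out in detail. Your observation that the anisotropic stretches in the conjugation $B\circ\widetilde\Phi\circ A$ produce only the bounded cross-factors $S/h^2$, $h/\ell$, $\widetilde S$ (controlled by $C_1,C_2$) is the one point worth making explicit, and you do so.
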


\begin{lemm}
\label{square-to-graph2}
Let $f_1$ and $f_2$ be piecewise linear $L$-Lipschitz functions on a segment $[a; b]$. Let $h$, $C_1$ and $C_2$ be positive constants, $h/C_1 \le f_1(x) - f_2(x)\le h\cdot C_1$ for any $x\in[a;b],$ $h/C_2\le b-a \le C_2\cdot h$. Let $A = \int_a^b f_1(x) - f_2(x) dx.$ Then there exists a piecewise affine $K$-biLipschitz mapping $\Phi: [a;b]\times [0;h] \to \mathbb R^2,$ where $K$ depends only on $L,$ $C_1$ and $C_2$, such that
\begin{enumerate}[label = \arabic*),ref = \arabic*)]
\item
a composition $\Phi\circ\alpha_{\sqrt{\frac{A}{(b-a)h}}}$ preserves area;
\item
$\Phi(x, 0) = (x, f_2(x))$ and $\Phi(x, h) = (x, f_1(x))$ for any $x\in[a;b]$;
\item
$\Phi(a, y) = \left(a, \frac{y}{h}\cdot f_1(a) + \left(1-\frac{y}{h}\right)\cdot f_2(a)\right)$ and $\Phi(b, y) = \left(b, \frac{y}{h}\cdot f_1(b) + \left(1-\frac{y}{h}\right)\cdot f_2(b)\right)$ for any $y\in[0;h].$
\end{enumerate}
\end{lemm}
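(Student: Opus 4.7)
The plan is to reduce Lemma~\ref{square-to-graph2} to Lemma~\ref{square-to-graph} by composing on the left with a volume-preserving vertical shear that lifts the bottom graph from the $x$-axis to the graph of $f_2$. Set $g = f_1 - f_2$, which is piecewise linear and $2L$-Lipschitz on $[a;b]$, and observe that the hypotheses $h/C_1\le g(x)\le h\cdot C_1$ and $h/C_2\le b-a\le C_2\cdot h$ are exactly those required by Lemma~\ref{square-to-graph} applied to $g$, with $A = \int_a^b g(x)\,dx$ playing the role of $S$.

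First I would apply Lemma~\ref{square-to-graph} to $g$ to obtain a piecewise affine $K'$-biLipschitz mapping $\Phi_0:[a;b]\times[0;h]\to\mathbb R^2$ such that $\Phi_0\circ\alpha_{\sqrt{A/((b-a)h)}}$ preserves area, $\Phi_0(x,0) = (x,0)$, $\Phi_0(x,h) = (x,g(x))$, and on the vertical sides $\Phi_0(a,y) = (a, y\cdot g(a)/h)$, $\Phi_0(b,y) = (b, y\cdot g(b)/h)$, where $K'$ depends only on $L$, $C_1$, $C_2$.

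Next I would extend $f_2$ to a piecewise linear $L$-Lipschitz function on the whole line (for instance by a constant continuation outside $[a;b]$) and apply Lemma~\ref{skew-lemma} to obtain the area-preserving piecewise affine $\sqrt{2+L^2}$-biLipschitz shear $T:(x,y)\mapsto(x,y+f_2(x))$. Define $\Phi = T\circ\Phi_0$. Checking the boundary conditions is then routine: on the bottom, $\Phi(x,0) = (x, f_2(x))$; on the top, $\Phi(x,h) = (x, g(x)+f_2(x)) = (x, f_1(x))$; on the left,
\[
\Phi(a,y) = \bigl(a,\ y\cdot g(a)/h + f_2(a)\bigr) = \bigl(a,\ (y/h)\,f_1(a) + (1-y/h)\,f_2(a)\bigr),
\]
and similarly on the right. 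Since $T$ has Jacobian $1$, the composition $\Phi\circ\alpha_{\sqrt{A/((b-a)h)}} = T\circ\bigl(\Phi_0\circ\alpha_{\sqrt{A/((b-a)h)}}\bigr)$ preserves area, and the biLipschitz constant of $\Phi$ is bounded by $K'\cdot\sqrt{2+L^2}$, a quantity depending only on $L$, $C_1$ and $C_2$.

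There is no real obstacle here; the statement is essentially a cosmetic generalization of Lemma~\ref{square-to-graph}, and the main point is simply to verify that the vertical shear $T$ converts all three boundary conditions of Lemma~\ref{square-to-graph} into the corresponding shifted conditions of Lemma~\ref{square-to-graph2} while contributing only an $L$-controlled factor to the biLipschitz constant and preserving area.
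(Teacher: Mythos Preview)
Your proof is correct and follows exactly the same approach as the paper: apply Lemma~\ref{square-to-graph} to $f_1-f_2$ to get $\Phi_0$, then post-compose with the vertical shear $(x,y)\mapsto(x,y+f_2(x))$ from Lemma~\ref{skew-lemma}. You actually spell out the boundary and area-preservation checks in more detail than the paper does.
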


\begin{proof}
We apply Lemma~\ref{square-to-graph} to a function $f_1 - f_2$ and we obtain a mapping $\Phi_0$ which satisfies all conditions except that its image is a region bounded by the graph of $f_1 - f_2$ and $x$-axis, but we need a region bounded by graphs of the functions $f_1$ and $f_2.$ We map the first region to the second one by
$$
\Phi_1: (x,y) \mapsto (x, y + f_2(x)).
$$

By Lemma~\ref{skew-lemma} the mapping $\Phi = \Phi_1\circ\Phi_0$ is sought-for.

\end{proof}

We will use the following lemma only in Section~\ref{correction-on-some-square}.

\begin{lemm}
\label{square-rotation}
Let $B$ be a square on the plane, $U$ be its open neighborhood. There exists $\varepsilon > 0$ such that for any $\varphi\in (-\varepsilon;\varepsilon)$ there exists a piecewise affine area preserving 2-biLipschitz mapping $\Phi:\mathbb R^2\to\mathbb R^2$ which is the identity outside $U$ and whose restriction to $B$ coincides with a rotation by $\varphi$ around its center.
\end{lemm}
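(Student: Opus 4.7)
After translation and isotropic rescaling (which leave the class of PL area-preserving biLipschitz maps essentially intact and only change the biLipschitz constant by a bounded factor), I may assume $B = [-1,1]^2$ centred at the origin. The plan is to fix $\delta > 0$ small enough that the concentric enlarged square $B^* := [-(1+\delta), 1+\delta]^2$ is contained in $U$, to choose $\varepsilon$ so small that $R_\varphi(B) \subset B^*$ for every $|\varphi|<\varepsilon$, and to construct $\Phi$ as a PL area-preserving self-map of $B^*$ that equals $R_\varphi$ on $B$ and the identity on $\partial B^*$, extended by the identity outside $B^*$.

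The central tool is the three-shear factorization of $R_\varphi$, verified by a direct matrix computation:
$$
R_\varphi = V_\alpha\circ H_\beta\circ V_\alpha,\qquad \alpha=\tan(\varphi/2),\quad \beta=-\sin\varphi,
$$
where $V_t(x,y)=(x,y+tx)$ and $H_t(x,y)=(x+ty,y)$; both shear parameters are $O(|\varphi|)$. I will build each of the three factors as a PL area-preserving self-map $\tilde S$ of $B^*$ equal to the corresponding shear $S$ on an inner sub-square (namely $B$, $V_\alpha(B)$, and $H_\beta V_\alpha(B)$ respectively, all contained in $B^*$ for $|\varphi|$ small) and equal to the identity on $\partial B^*$. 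The composition $\tilde V_\alpha\circ\tilde H_\beta\circ\tilde V_\alpha$ then coincides with $R_\varphi$ on $B$ by the factorization, extends by the identity outside $B^*\subset U$, and inherits area preservation from its factors.

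To produce a single localized shear, say $\tilde V_\alpha$, I decompose the annular region $B^*\setminus B$ into four congruent trapezoids $T_N,T_E,T_S,T_W$ by cutting along the diagonals of $B^*$ (which also pass through the corners of $B$). The prescribed boundary values on $\partial(B^*\setminus B)$ are the identity on the four outer edges, $V_\alpha$ on the four inner edges, and affine interpolation on the four diagonal segments; these are manifestly consistent along each shared diagonal, because the diagonal carries only two prescribed endpoint images which are identical on both sides. A shoelace computation shows that each $T_i$ and its prescribed image quadrilateral have area $(1+\delta)^2-1$ (an automatic consequence of the fact that $V_\alpha$ preserves area). On a single trapezoid $T_N$, I will realize the map in three steps: apply Lemma~\ref{trapezoid-lemma} to convert $T_N$ area-preservingly to a rectangle; apply a small shear from Lemma~\ref{skew-lemma} whose slope equals that of the tilted inner edge of the target, so that the target becomes a region between two PL graphs over a common $x$-interval; and finally apply Lemma~\ref{square-to-graph2} to map the rectangle onto this graph region while matching the remaining prescribed boundary data. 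Analogous constructions handle $\tilde H_\beta$ and the second $\tilde V_\alpha$ (with roles of $x$ and $y$ interchanged where appropriate).

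The main obstacle is the uniform control of biLipschitz constants through the three-fold composition. The distortion parameters entering Lemmas~\ref{trapezoid-lemma}, \ref{skew-lemma}, and \ref{square-to-graph2} in our setting take the form $O(1)$ in $\delta$ and $O(|\varphi|)$ in $\varphi$, so by first fixing $\delta$ (depending only on $B$ and $U$) and then choosing $\varepsilon=\varepsilon(\delta)$ small enough, each of the three localized shears will have biLipschitz constant less than $2^{1/3}$; the composition will then be $2$-biLipschitz, PL, area-preserving, equal to $R_\varphi$ on $B$, and the identity outside $U$, as required.
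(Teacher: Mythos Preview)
Your three-shear strategy is a genuinely different route from the paper's, and the factorization $R_\varphi=V_\alpha H_\beta V_\alpha$ together with the idea of localizing each shear to $B^*$ is sound in principle. The real gap is in the biLipschitz bookkeeping. Your construction on each annular trapezoid $T_N$ passes through a rectangle via Lemma~\ref{trapezoid-lemma} and then back out via Lemma~\ref{square-to-graph2}; but the trapezoid-to-rectangle map of Lemma~\ref{trapezoid-lemma} has biLipschitz constant governed by the distortion pair $((b-a)/h,b/a)$ of $T_N$, which depends only on $\delta$ and is bounded \emph{away} from $1$ regardless of $\varphi$. In particular, when $\varphi=0$ your pipeline does \emph{not} produce the identity on $T_N$ (Lemma~\ref{square-to-graph2} is not the inverse of Lemma~\ref{trapezoid-lemma}), so the composite localized shear is not close to the identity and there is no reason each factor should have constant below $2^{1/3}$. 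A secondary issue is the invocation of Lemma~\ref{square-to-graph2}: after your shear the target quadrilateral has its two ``graph'' boundaries over different $x$-intervals and meeting at the corners of $B^*$, so the hypothesis $f_1-f_2\ge h/C_1>0$ fails there.

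The paper avoids both problems by building a \emph{single} map rather than a threefold composition. It places $B$ inside a larger square $B'\subset U$ whose diagonal is parallel to $PP'$ (with $P$ a vertex of $B$ and $P'=R_\varphi P$), tiles $B'\setminus B$ and $B'\setminus R_\varphi(B)$ into mirror-symmetric collections of four triangles, four corner right triangles, and four trapezoids of equal area and equal height, and maps corresponding pieces by affine maps on the triangles and by the trapezoid-to-rectangle-to-trapezoid construction of Lemma~\ref{trapezoid-lemma}. Crucially, for $\varphi=0$ this map \emph{is} the identity, and the Jacobi matrices on each piece vary continuously with $\varphi$; the $2$-biLipschitz bound then follows from continuity, not from the quantitative estimates of the lemmas. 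If you want to salvage your approach, you would need the same kind of continuity argument applied to each localized shear (constructing it so that it equals the identity at $\varphi=0$ and depends continuously on $\varphi$); but once you do that, the detour through three shears buys nothing over the paper's one-step construction.
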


\begin{figure}[h]
\center
\includegraphics{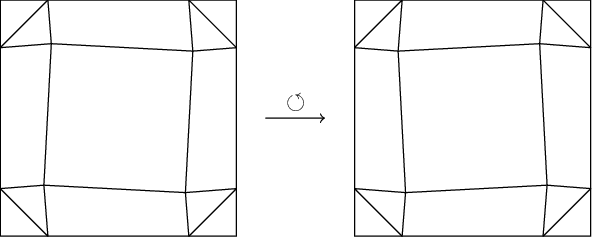}
\caption{A piecewise affine mapping whose restriction to the central square is rotation.}
\label{pl-rotation}
\end{figure}

\begin{proof}
We fix some $\varphi<90^{\circ}.$
Let $P$ be some vertex of the square $B$, $P'$ its image under a rotation of $P$ by $\varphi$ about the center $O$ of $B.$ Let $B'$ be a square with the center $O$ containing $B$ and whose diagonal is parallel to a line $PP'.$ If $\varphi$ is sufficiently small, we can also assume that $B'$ lies inside $U$. On two nearest sides of the square $B'$ to the point $P$ there exists the unique pair of points $P_1,$ $P_2$ such that the segment $P_1 P_2$ is parallel to the line $PP'$ and $\angle P_1 P P_2 = 90^{\circ}.$  Then the square $B'$ is tiled into the square $B$, four equal triangles $PP_1P_2$ and its images under rotations by $90^{\circ},$ $180^{\circ}$ and $270^{\circ},$ four more isosceles right triangles at the vertices of $B'$ and four trapezoids (Figure~\ref{pl-rotation}, on the left).

We construct one more tiling of $B'$ (Figure~\ref{pl-rotation}, on the right). It consists of the image of $B$ under rotation by $\varphi$ and similar triangles and trapezoids. The new tiling can be obtained from the previous one by means of symmetry. We consider another correspondence between elements of these tilings: the one which is the rotation on the square $B$ and which is the identity on the boundary of $B'.$ Due to symmetry, the areas of the corresponding elements are equal and the heights of trapezoids are equal. We consider the mapping  from $B'$ to itself which induces the mentioned correspondence, which is affine on $B$ and on each triangle, and which is defined on trapezoids by applying Lemma~\ref{trapezoid-lemma} two times. Despite that these trapezoids do not satisfy conditions of the lemma (there exists an obtuse angle on the longer base and the heights are too big with respect to the bases) we can repeat the construction in the lemma but we can not use the estimates of that lemma. Since the constructed mapping is close to the identity if $\varphi$ is small, and its Jacobi matrix is continuous on $\varphi$ in the interior of each element of the tiling, for sufficiently small $\varepsilon>0$ the mapping is 2-biLipschitz for all $\varphi\in(-\varepsilon;\varepsilon).$
\end{proof}

%


\section{Proof of the main theorem}
In this section we prove Theorems~\ref{main-theorem},~\ref{annulus-version} and~\ref{the-technical-lemma-without-parameters}.

\subsection{A result of Tukia and an extension to the square}
\label{square-extension}
This section is devoted to the proofs of Propositions~\ref{jacobian-problem} and~\ref{square-extension-lemma}.

\begin{prop}
\label{square-extension-lemma}
For any $L$ there exists $\widetilde K$ such that any $L$-biLipschitz mapping $\phi:\partial B\to\mathbb R^2$ such that the curve $\phi(\partial B)$ bounds a region of area 4 can be extended to an area preserving  $\widetilde K$-biLipschitz mappings of the square $B$ which is locally piecewise affine on the interior of the square.
\end{prop}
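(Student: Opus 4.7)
The plan is to combine three ingredients: Tukia's biLipschitz extension theorem (in the form of Theorem~\ref{Tukia-theorem}), the auxiliary area-preserving building blocks of Section~2, and the prescribed-Jacobian result of Proposition~\ref{jacobian-problem}. First I would apply Tukia's theorem to extend $\phi$ to a $K_0(L)$-biLipschitz mapping $\Psi_0:B\to\phi(B)$. The crucial feature of this extension, recorded in Theorem~\ref{Tukia-theorem}, is that the images $\Psi_0(Q)$ of the quadrilaterals $Q$ of the tiling in Figure~\ref{tiling} fall into finitely many similarity classes depending only on $L$. In particular each such image has bounded aspect ratio, and the ratio $\mathrm{Area}(\Psi_0(Q))/\mathrm{Area}(Q)$ is controlled above and below by constants depending only on $L$.

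The central step is to replace $\Psi_0$ on each quadrilateral $Q$ by a piecewise affine biLipschitz mapping $\Psi_1|_Q:Q\to\Psi_0(Q)$ that coincides with $\Psi_0$ on $\partial Q$ and has constant Jacobian equal to $\mathrm{Area}(\Psi_0(Q))/\mathrm{Area}(Q)$. The strategy is first to approximate $\Psi_0|_{\partial Q}$ by a piecewise linear parametrization of $\partial\Psi_0(Q)$, then, after applying an affine transformation bringing $Q$ into a standard rectangular position, to slice $Q$ and $\Psi_0(Q)$ into matching strips whose boundaries are graphs of piecewise linear Lipschitz functions, and finally to map strip to strip using Lemmas~\ref{square-to-graph} and~\ref{square-to-graph2} precomposed with a suitable homothety that sets the value of the Jacobian. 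Because both $Q$ and $\Psi_0(Q)$ come from a finite list of similarity classes, the aspect-ratio and Lipschitz hypotheses of those lemmas are satisfied with constants depending only on $L$. The pieces glue across neighbouring tiles because each agrees with $\Psi_0$ on $\partial Q$, producing a piecewise affine biLipschitz mapping $\Psi_1:B\to\phi(B)$ of biLipschitz constant $K_1(L)$ extending $\phi$, whose Jacobian $f$ is constant on each quadrilateral of the tiling and satisfies $1/C\le f\le C$ for some $C=C(L)$.

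Since $\mathrm{Area}(B) = \mathrm{Area}(\phi(B)) = 4$ we have $\int_B f = 4$, so $f$ is of exactly the form handled by Proposition~\ref{jacobian-problem}. Applying that proposition yields a locally affine $K_2(L)$-biLipschitz self-mapping $\chi:B\to B$ whose Jacobian equals $f$ almost everywhere and which is the identity on $\partial B$. The composition $\Psi_1\circ\chi^{-1}:B\to\phi(B)$ is then locally piecewise affine on the interior of $B$, has Jacobian $1$ almost everywhere, coincides with $\phi$ on $\partial B$, and has biLipschitz constant $\widetilde K = \widetilde K(L)$ depending only on $L$, which is what we want.

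The main obstacle will be the central step: carrying out the constant-Jacobian modification on each tile with a uniform biLipschitz bound. This is precisely the point where the finiteness of similarity classes in Tukia's construction is essential, and it is the reason we must work with the specific tiling of Figure~\ref{tiling} rather than an arbitrary one. A secondary point to verify is that the mapping $\chi$ supplied by Proposition~\ref{jacobian-problem} is the identity on $\partial B$; we expect this to follow from the construction, since the quadrilaterals of the tiling accumulate towards $\partial B$.
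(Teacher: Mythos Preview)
Your strategy is correct and, once unpacked, coincides with the paper's: the paper applies Lemma~\ref{Tukia-theorem}, then Lemma~\ref{half-theorem} to build a new tiling $\widetilde Q_j$ of $B$ with $\mathrm{Area}(\widetilde Q_j)=\mathrm{Area}(\Phi(Q_j))$, and Lemma~\ref{areafication} to map each $\widetilde Q_j$ onto $\Phi(Q_j)$ by an area-preserving piece; you instead keep the original tiles $Q_j$, make the Jacobian constant on each, and then invoke Proposition~\ref{jacobian-problem}. Since Proposition~\ref{jacobian-problem} is itself proved via Lemma~\ref{half-theorem}, your composition $\Psi_1\circ\chi^{-1}$ literally equals the paper's map $\widetilde\Phi$ tile by tile. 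Two minor points: your ``central step'' is over-engineered---because $\Psi_0\circ q_j|_{\partial(\lambda_j B)}$ already lies in a finite set of similarity classes, you can simply pick once and for all an area-preserving piecewise affine filling $p_k$ of each class (this is exactly what Lemma~\ref{areafication} does, citing~\cite{BD}) and precompose with the constant-Jacobian parametrizations $q_j'$ of $Q_j$ from the proof of Proposition~\ref{jacobian-problem}; no slicing into strips via Lemmas~\ref{square-to-graph}--\ref{square-to-graph2} is needed. And your secondary worry is well-founded but resolves: that $\chi|_{\partial B}=\mathrm{id}$ follows from property~\ref{boundary-limit} of Lemma~\ref{half-theorem}, exactly as you anticipate.
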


\begin{figure}
\center
\includegraphics{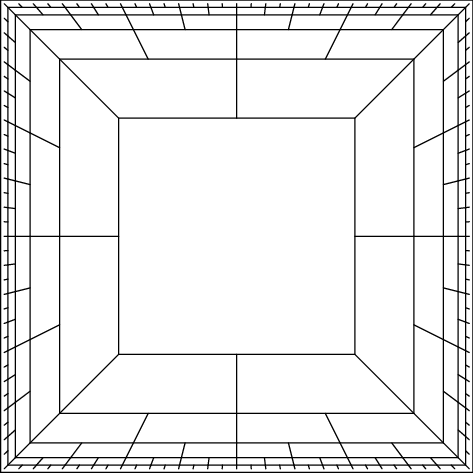}
\hspace{1cm}
\includegraphics{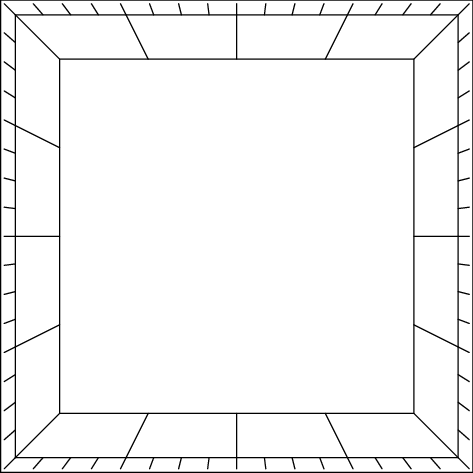}
\caption{The tiling of the square into quadrilaterals $Q_j$ for $m=1$ and for $m=2.$}
\label{tiling}
\end{figure}

Let $m\ge1$ be an integer. Let $B = [-1;1]^2$ be a square on the plane, $B_i = \left(1-2^{-im}\right)\cdot B, i\ge1$ be a sequence of square obtained from $B$ by homotheties. We divide $B$ into the square $B_1$ and annuli $B_{i+1}\setminus B_i.$ The boundary of the annulus $B_{i+1}\setminus B_i$ is a union of the boundaries of two squares. We divide each side of these squares into $2^{im}$ equal segments as in Figure~\ref{tiling}, and we divide the annulus respectively into $4\cdot2^{im}$ quadrilaterals. We denote the square $B_1$ by $Q_0$ and we arbitrarily number quadrilaterals of all annuli by positive integers and denote them by $Q_j.$

Let $\lambda_{j} = \mathrm{diam}(Q_j),$ where $\mathrm{diam}(X)$ denotes the diameter of the set $X.$ Take another copy of the square $B$ and divide it into two triangles by a diagonal. For each quadrilateral $Q_{j}$ we fix a homeomorphism $q_{j}:\lambda_{j}\cdot B\to Q_{j}$ which is affine on each of two triangles.

It is clear that all the mappings $q_{j}$ are $M$-biLipschitz for some constant $M$, which is independent of $m.$

Let $\alpha_{\lambda}:z\mapsto \lambda z$ be the homothety with the coefficient $\lambda$ and with the center in the origin of the coordinate system. By a {\it similarity} we call a composition of an isometry and a homothety.

\begin{lemm}
\label{Tukia-theorem}
For any $L$ there exist an integer $m$, a real number $K$ and a finite number of piecewise affine embeddings $p_k:B \to\mathbb R^2$ such that any $L$-biLipschitz mapping $\phi: \partial B \to \mathbb R^2$ can be extended to such $K$-biLipschitz mapping $\Phi:B\to\mathbb R^2$ that for any quadrilateral $Q_{j}$ (from the construction above which depends on $m$) there exist such $k$ and a similarity $s_{j}$ that
$$
\Phi\big|_{Q_{j}}\circ q_{j} = s_{j}\circ p_k \circ\alpha_{\lambda_{j}^{-1}}.
$$
\end{lemm}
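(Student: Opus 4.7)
The plan is to construct $\Phi$ piece by piece on the tiling $\{Q_j\}$ using a finite library of piecewise affine model maps $p_k\colon B\to\mathbb R^2$, following Tukia's strategy \cite{Tuk} adapted to the self-similar structure of the tiling. The library is obtained by a compactness argument: the space of $L$-biLipschitz embeddings $\partial B\to\mathbb R^2$ modulo similarities of $\mathbb R^2$ is totally bounded in the $C^0$-topology by Arzel\`a--Ascoli. So for $\epsilon>0$ small (to be chosen as a function of $L$) I can pick a finite $\epsilon$-net $\{\psi_k\}_{k=1}^N$ of model boundary maps and, for each $\psi_k$, pre-compute a piecewise affine biLipschitz extension $p_k\colon B\to\mathbb R^2$ using any known constructive extension procedure, for example the one of Daneri--Pratelli \cite{DanPrat}.

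For a given $\phi$, the strategy is to choose $m$ large enough that for each quadrilateral $Q_j$ at scale $\lambda_j=2^{-im}$, the restriction of $\phi$ to the nearby piece of $\partial B$, rescaled to unit size and normalized by an appropriate similarity, is $\epsilon$-close in $C^0$ to some $\psi_{k(j)}$. I then set
$$
\Phi\big|_{Q_j}:=s_j\circ p_{k(j)}\circ\alpha_{\lambda_j^{-1}}\circ q_j^{-1},
$$
where the similarity $s_j$ is chosen to align the model with the true position, orientation and scale of the local pattern of $\phi$ near $Q_j$. This automatically produces the self-similar form required by the statement, and the local biLipschitz constant is controlled in terms of those of $p_{k(j)}$, the piecewise affine $q_j$, the homothety $\alpha_{\lambda_j^{-1}}$, and $s_j$, all of which are bounded by a function of $L$.

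The main obstacle is consistency across the interfaces $\partial B_i$ between adjacent annuli of the tiling and convergence of $\Phi$ to $\phi$ on $\partial B$: since the $p_k$ are only $\epsilon$-approximations of the actual local patterns, naively chosen pairs $(k(j),s_j)$ at neighbouring scales leave small mismatches on shared boundaries. I plan to resolve this by building $\Phi$ recursively on the annuli $B_{i+1}\setminus B_i$ from the inside out, and at each scale defining the target boundary data not as the raw restriction of $\phi$ to $\partial B_i$ but as a piecewise linear approximation whose values on the tiling vertices are already compatible with the previously chosen models at scale $i-1$. The discrepancy between this approximation and $\phi$ at scale $2^{-im}$ shrinks by a factor depending on $\epsilon$ at each step, so a geometric-series argument produces convergence to $\phi$ on $\partial B$ and a uniform global biLipschitz constant $K=K(L)$. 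The auxiliary tools from Section 2 (Lemmas~\ref{skew-lemma} and~\ref{trapezoid-lemma}) furnish the piecewise affine corrections used to absorb these small discrepancies with controlled distortion.
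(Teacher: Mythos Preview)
The paper does not supply its own proof of this lemma; it simply records that the statement is a compilation of Lemmas~2A and~3A and the results of Section~12 of Tukia~\cite{Tuk}, and leaves it at that.

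Your outline has the right high-level shape---compactness of the normalized space of $L$-biLipschitz boundary germs, a recursive construction over the annuli $B_{i+1}\setminus B_i$, and geometric convergence to $\phi$ on $\partial B$---but there is a genuine gap at the point where the \emph{finiteness} of the library meets the \emph{exact} equality required by the conclusion. The lemma demands that on every $Q_j$ the extension equal $s_j\circ p_k\circ\alpha_{\lambda_j^{-1}}\circ q_j^{-1}$ on the nose, with $p_k$ drawn from a fixed finite list independent of $\phi$. If you select $(k(j),s_j)$ by an $\epsilon$-net and then repair the interface mismatches by post-composing with small piecewise affine corrections, those corrections become part of the map on $Q_j$; since they depend on $\phi$ and on the whole history of choices at coarser scales, the resulting collection of tile-maps is in general infinite, and you have lost the finite library. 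What is missing is a mechanism that discretizes the boundary data \emph{before} extension, so that only finitely many similarity types of edge configurations can ever occur on a $Q_j$ and adjacent tiles share their edge data by construction rather than by after-the-fact patching; this is essentially how Tukia organizes the argument. Your appeal to Lemmas~\ref{skew-lemma} and~\ref{trapezoid-lemma} here is also misplaced: those are area-preserving devices used later in the paper for the prescribed-Jacobian step, not for producing Tukia's extension.
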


Lemma~\ref{Tukia-theorem} is proved by P.\,Tukia. It is a compilation of Lemmas~2A and~3A and results of Section~12 in~\cite{Tuk}. 

\begin{rema}
It can be shown that actually in Lemma~\ref{Tukia-theorem} one can always assume that $m=1$. We will not use this fact in this paper.
\end{rema}

By Lemma~\ref{Tukia-theorem} the bounded component of $\mathbb R^2\setminus\phi(\partial B)$ is tiled into polygons $\Phi(Q_j)$ which form a finite set of similarity classes, i. e. equivalence classes with respect to similarities. In the proof of Theorem~\ref{main-theorem} we use this tiling and we will not change it. To make $\Phi$ area preserving we change the tiling of $B$. We will construct new tiling into polygons $\widetilde Q_j$ which have the same area as $\Phi(Q_j).$ The image of $\widetilde Q_j$ under a new mapping will be the same as in Lemma~\ref{Tukia-theorem}, i. e. $\Phi(Q_j).$



\begin{lemm}
\label{half-theorem}
For any $\widetilde C$ there exists a constant $\widetilde M$ such that for any positive real numbers $A_j, j\ge 0,$ such that
\begin{itemize}
\item $1/\widetilde C\le \frac{A_j}{\mathrm{Area}(Q_j)}\le \widetilde C$ for any $j\ge0;$
\item $\sum\limits_{j=0}^{\infty}A_j = 4$
\end{itemize}
there exist polygons $\widetilde Q_j, j\ge0$ and area preserving piecewise affine $\widetilde M$-biLipschitz mappings $\widetilde q_{j}:\widetilde\lambda_{j}\cdot B \to\widetilde Q_j, j\ge 0$, where $\widetilde\lambda_{j} = \frac12\sqrt{A_j}$, such that
\begin{enumerate}[label = \arabic*), ref = \arabic*)]
\item
\label{combinatorial-equivalence}
$\left(\widetilde q_{j}\circ\alpha_{\widetilde\lambda_{j}}\right)^{-1}\circ\left(\widetilde q_{l}\circ\alpha_{\widetilde\lambda_{l}}\right)
=
\left(q_{j}\circ\alpha_{\lambda_{j}}\right)^{-1}\circ\left(q_{l}\circ\alpha_{\lambda_{l}}\right)$ for all $j$ and $l$;
\item
\label{interior-cover}
the interior of $B$ is $\bigcup\limits_j \widetilde Q_j;$
\item
\label{boundary-limit}
a sequence $z_n \in Q_{j_n}$ converges to a point $z\in \partial B$ if and only if any sequence $\widetilde z_n \in \widetilde Q_{j_n}$ converges to $z;$
\item
$\widetilde q_0$ is affine and $\widetilde q_0\left(\widetilde\lambda_0\cdot B\right)$ is a rectangle whose sides are parallel to the sides of $B$.
\end{enumerate}
\end{lemm}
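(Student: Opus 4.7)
The plan is to use Proposition~\ref{jacobian-problem} to produce the new tiling as the image of the old one under a biLipschitz self-map of $B$ with prescribed Jacobian, and then to construct the area preserving parametrisations piece by piece via a Schoenflies-type extension, using the auxiliary lemmas of Section~2.

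Apply Proposition~\ref{jacobian-problem} to the piecewise constant function $f:B\to\mathbb R$ defined by $f|_{Q_j}\equiv A_j/\mathrm{Area}(Q_j)\in[\widetilde C^{-1},\widetilde C]$ (which integrates to $\sum_j A_j = 4 = \mathrm{Area}(B)$). This yields a piecewise affine biLipschitz self-map $\Phi_0:B\to B$ with $\det D\Phi_0 = f$ almost everywhere and biLipschitz constant $L$ depending only on $\widetilde C$. I would assume (or arrange, via a suitable choice of the construction in Proposition~\ref{jacobian-problem}, or by composing with an area preserving boundary-correcting map) that $\Phi_0$ fixes $\partial B$ pointwise and that $\Phi_0|_{Q_0}$ is affine with axis-parallel rectangular image. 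Set $\widetilde Q_j := \Phi_0(Q_j)$; then $\mathrm{Area}(\widetilde Q_j)=A_j$ and the $\widetilde Q_j$ tile $\mathrm{int}(B)$, so conditions 2 and 4 are immediate, and condition 3 follows from $\Phi_0|_{\partial B}=\mathrm{id}$ combined with $\mathrm{diam}(\widetilde Q_j)\le L\cdot\mathrm{diam}(Q_j)\to 0$ as $j$ ranges over deeper annuli.

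For each $j\ge 1$, prescribe $\widetilde q_j$ on $\partial(\widetilde\lambda_j\cdot B)$ to equal $\Phi_0\circ q_j\circ\alpha_{\lambda_j/\widetilde\lambda_j}$; then on shared edges the transition $(\widetilde q_j\circ\alpha_{\widetilde\lambda_j})^{-1}\circ(\widetilde q_l\circ\alpha_{\widetilde\lambda_l})$ reduces, after the two copies of $\Phi_0$ cancel, to the original transition $(q_j\circ\alpha_{\lambda_j})^{-1}\circ(q_l\circ\alpha_{\lambda_l})$, which gives condition~1. It remains to extend this boundary parametrisation to an area preserving piecewise affine biLipschitz map on all of $\widetilde\lambda_j\cdot B$, which has the correct total area $A_j = \mathrm{Area}(\widetilde Q_j)$ by construction. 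This is done by subdividing $\widetilde Q_j$ into a bounded number of trapezoidal and graph-bounded pieces and invoking Lemmas~\ref{pl-square-to-graph0},~\ref{square-to-graph}, and~\ref{square-to-graph2} on each of them in turn.

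The main obstacle is this final extension step: producing the area preserving extensions with a biLipschitz constant $\widetilde M$ uniform in $j$ and in the sequence $\{A_j\}$. Uniformity rests on Lemma~\ref{Tukia-theorem}, which provides only finitely many similarity classes for the original $Q_j$; since $\widetilde Q_j = \Phi_0(Q_j)$ differs from $Q_j$ by a map of bounded distortion $L$, the $\widetilde Q_j$ fall into finitely many classes of bounded-distortion polygons. Consequently the shapes, areas, and boundary parametrisations fed into the Section~2 lemmas are uniformly controlled, so the resulting extensions have biLipschitz constants depending only on $\widetilde C$ and on the fixed finite list of reference polygons from Lemma~\ref{Tukia-theorem}.
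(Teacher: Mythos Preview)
Your argument is circular: in this paper, Proposition~\ref{jacobian-problem} is \emph{proved using} Lemma~\ref{half-theorem} (see the short proof immediately following that of Lemma~\ref{half-theorem} in Section~\ref{square-extension}, where the map with prescribed Jacobian is assembled as $\widetilde q_j\circ\alpha_{\widetilde\lambda_j/\lambda_j}\circ(q_j')^{-1}$ on each $Q_j$). So you cannot invoke it here. The paper instead builds the $\widetilde Q_j$ directly: it first cuts $B$ into a central rectangle $\widetilde Q_0$ and four trapezoids $\widetilde T_n$ of the right total areas, and then uses the Main Lemma (Lemma~\ref{main-lemma}) to produce piecewise-linear functions $f_{N,k}$ whose graphs bound auxiliary polygons $\overline Q_j$ of area exactly $A_j$; these are transported into the $\widetilde T_n$ to give the $\widetilde Q_j$, and Lemmas~\ref{square-to-graph} and~\ref{square-to-graph2} then supply the parametrisations $\widetilde q_j$ with uniform constants.

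Even if one granted Proposition~\ref{jacobian-problem} as a black box, your final extension step does not go through as written. Your polygons $\widetilde Q_j=\Phi_0(Q_j)$ are images of quadrilaterals under a piecewise affine map whose triangulation you do not control, so they need not be trapezoids or graph-bounded regions at all; Lemmas~\ref{pl-square-to-graph0}, \ref{square-to-graph}, \ref{square-to-graph2} do not apply to such shapes without further work. The claim that ``the $\widetilde Q_j$ fall into finitely many classes of bounded-distortion polygons'' is not a finiteness statement that yields uniform area-preserving extensions (bounded biLipschitz distortion is a continuum of shapes, not a finite list), and the citation of Lemma~\ref{Tukia-theorem} is misplaced --- that lemma controls the images $\Phi(Q_j)$ under Tukia's extension, not your $\Phi_0$. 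Producing an area-preserving biLipschitz filling of a quadrilateral with prescribed boundary data and uniform constant is precisely the difficulty that Lemma~\ref{half-theorem} (together with Lemma~\ref{main-lemma}) is designed to overcome.
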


Condition~\ref{combinatorial-equivalence} means that the tiling of $B$ into polygons $\widetilde Q_{j}$  is combinatorially equivalent to the tiling into quadrilaterals $Q_{j}.$ In particular, polygons $\widetilde Q_j$ and $\widetilde Q_l$ do not intersect if polygons $Q_j$ and $Q_l$ do not intersect. If quadrilaterals $Q_j$ and $Q_l$ have a common side, polygons $\widetilde Q_j$ and $\widetilde Q_l$ have a common arc of their boundaries, each preimage of this arc under the mappings $\widetilde q_{j}$ and $\widetilde q_{l}$ is a side of the square and the composition $\widetilde q_{j}^{-1}\circ \widetilde q_{l}$ is a linear mapping from one side to the second.

\begin{proof}[Proof of Lemma~\ref{half-theorem}] 
In Figure~\ref{tiling} the square $B$ is divided into the central square $Q_0$ and four trapezoids $T_{n}$, $n=1,2,3,4$, one base of which coincides with a side of the square $Q_0$, and the second one coincides with a side of the square $B$.

\begin{figure}[h]
\center
\includegraphics{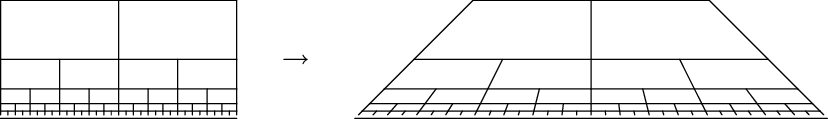}
\caption{The mapping $t_n:R\to T_n$ for $m=1$. The trapezoid is tiled into quadrilaterals $Q_j$.}
\label{rectifying-trapezoid}
\end{figure}

Let $R = [0;1]\times[0;2^{-m}]\subset\mathbb R^2.$ 
We fix such mappings $t_{n}: R\to T_{n}$ that their restriction on each side of the rectangle $R$ and on each horizontal line is affine and the lower side (of length 1) of the rectangle $R$ is mapped to the longer base of the trapezoid (Figure~\ref{rectifying-trapezoid}).

\begin{figure}[h]
\center
\includegraphics{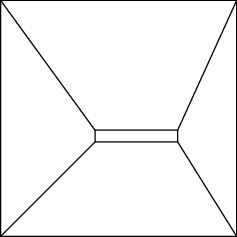}
\caption{The square is divided into the rectangle $\widetilde Q_0$ and the trapezoids $\widetilde T_1$, $\widetilde T_2$, $\widetilde T_3$, $\widetilde T_4$.}
\label{tiling-area}
\end{figure}

\smallskip
\noindent{\bf Step 1.}
We divide the square $B$ into a rectangle $\widetilde Q_{0}$ and four trapezoids $\widetilde T_{n}$ such that
$$
\mathrm{Area}\left(\widetilde Q_0\right) = A_0,\ \mathrm{Area}\left(\widetilde T_{n}\right) = \sum\limits_{Q_j\subset T_n}A_j, n = 1,2,3,4,
$$
one base of each trapezoid coincides with a side of the square $B$, the second one coincides with a side of the rectangle $\widetilde Q_0$, and trapezoids $\widetilde T_{n}$ are numbered in the same way as the trapezoids $T_{n}.$ This can be done in the unique way (Figure~\ref{tiling-area}).

By assumptions of the lemma $A_j\ge \mathrm{Area}(Q_j)/\widetilde C$, so we have $\mathrm{Area}\left(\widetilde Q_0\right)\ge\mathrm{Area}(Q_0)/\widetilde C$ and $\mathrm{Area}\left(\widetilde T_n\right)\ge \mathrm{Area}(T_n)/\widetilde C$ for any $n=1,2,3,4.$ Therefore the lengths of sides of the rectangle and of heights of constructed trapezoids are separated from zero by a real number that depends only on $\widetilde C.$ So we can choose area preserving piecewise affine mappings
$$
\widetilde q_{0}:\widetilde\lambda_0\cdot B\to \widetilde Q_{0},\ \widetilde t_{n}:\sqrt{S_{n}\cdot2^{m}}\cdot R\to \widetilde T_{n},
$$
where $S_{n} = \mathrm{Area}\left(\widetilde T_{n}\right)$, in such a way that the biLipschitz constants of these mappings are bounded above by some function of $\widetilde C$ (since the trapezoids have acute angles on the longer base, we can apply Lemma~\ref{trapezoid-lemma}).

We can also choose a mapping $\widetilde q_0$ to be affine and so that for each side of the square $B$ its image under the mapping $\widetilde q_0\circ\alpha_{\widetilde \lambda_0}$ lies to the same side from the rectangle $\widetilde Q_0$ as its image under $q_0\circ\alpha_{\lambda_0}$ lies from the square $Q_0$: respectively above, below, to the left or to the right. We also can choose the mappings $\widetilde t_{n}$ in such a way that the lower side is mapped to the longer base of the trapezoid, mappings $\widetilde t_{n}\circ\alpha_{\sqrt{S_{n}\cdot2^{m}}}$ are affine on the sides of the rectangle $R$ and
$$
\left(\widetilde t_{n}\circ\alpha_{\sqrt{S_{n}\cdot2^{m}}}\right)^{-1}\circ \left(\widetilde t_{k}\circ\alpha_{\sqrt{S_{k}\cdot2^{m}}}\right) = t_{n}^{-1}\circ t_{k}
$$
for any $n,k\in\lbrace1,2,3,4\rbrace.$

\smallskip
\noindent{\bf Step 2.} Before we tile the trapezoids $\widetilde T_n$ into polygons $\widetilde Q_j,$ we  construct auxiliary polygons $\overline Q_j$ which form a tiling (of another region) combinatorially equivalent to the tiling of the trapezoid into quadrilaterals $Q_j$ and such that $\mathrm{Area}(\overline Q_j) = A_j.$ Their union will be a region bounded by a graph of some function on the segment $[0;1]$ and the $x$-axis.

We fix $n\in\lbrace1,2,3,4\rbrace$.

For each quadrilateral $Q_j\subset T_n$ we construct a polygon $\overline Q_j$ which has two common vertical sides with the rectangle $t_n^{-1}(Q_j)$ (Figure~\ref{rectifying-trapezoid}). Endpoints of these sides have the form

$$
((k-1)\cdot 2^{-im}, 2^{-im}),\ \left((k-1)\cdot 2^{-im}, 2^{-(i+1)m}\right),\ (k\cdot 2^{-im}, 2^{-im}),\ \left(k\cdot 2^{-im}, 2^{-(i+1)m}\right)
$$
for some $i$ and $k.$
The rest part of the boundary of $\overline Q_j$ is a union of two graphs of piecewise linear functions.

To construct these functions we apply Lemma~\ref{main-lemma}. First we need to define real numbers $A_{N,k}$ from assumptions of the lemma.

A pair of integers $(N, k)$ is called {\it admissible} if $N\ge 0$ and $k\in\left\lbrace1, 2, \dots, 2^N\right\rbrace.$

\begin{figure}
\center
\includegraphics{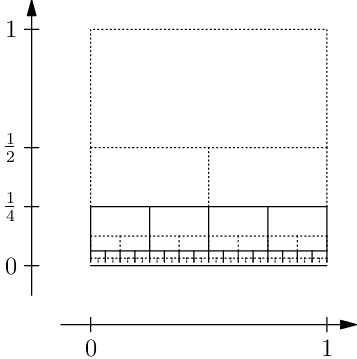}
\caption{The boundary of rectangles $t_n^{-1}(Q_j)$ is depicted by solid lines. Solid and dashed lines bound rectangles $R(N,k)$. The case $m=2$.}
\label{R-n-k}
\end{figure}

For each admissible pair $(N, k)$ we define a rectangle $R(N,k)$ with vertices
$$
\left((k-1)\cdot 2^{-N}, 2^{-N}\right),\ \left((k-1)\cdot 2^{-N}, 2^{-N-1}\right),\ \left(k\cdot 2^{-N}, 2^{-N}\right),\ \left(k\cdot 2^{-N}, 2^{-N-1}\right),
$$
see~Figure~\ref{R-n-k}.

If $N<m$ we set $A_{N,k} = 2^{-2N}/2.$

If $N = i\cdot m + r$, where $i,r$ are integers, $i\ge1,$ $0\le r<m$  and $R(N,k)\subset t_n^{-1}(Q_j),$ we set
$$
A_{N,k} = A_j\cdot\frac{\mathrm{Area}(R(N,k))}{\mathrm{Area}\left(t_n^{-1}(Q_j)\right)} = A_j\cdot2^{-2r-1} /\left(1 - 2^{-m}\right).
$$

Note that
\begin{equation}
\label{area-equation}
\sum\limits_{R(N,k)\subset t_n^{-1}\left(Q_j\right)} A_{N,k} = A_j.
\end{equation}

Now we check the assumptions of Lemma~\ref{main-lemma}.

Let quadrilateral $Q_j$ lie in an annulus $B_{i+1}\setminus B_i.$
By construction
$$
\mathrm{Area}(Q_j) = \left(\left(1 - 2^{-(i+1)m}\right)^2 - \left(1 - 2^{-im}\right)^2\right)\cdot 2^{-im}.	
$$

Then
\begin{multline*}
\mathrm{Area}(Q_j) = \left( 2\cdot \left( 2^{-im} - 2^{-(i+1)m} \right) + 2^{-2(i+1)m} - 2^{-2im} \right)\cdot 2^{-im} = \\
 = 2^{-2im}\cdot\left(2\cdot(1-2^{-m}) - 2^{-im}\cdot(1 - 2^{-2m}) \right).
\end{multline*}

It is clear that $\mathrm{Area}(Q_j) < 2\cdot2^{-2im}.$
Also
$$
\mathrm{Area}(Q_j)\ge 
2^{-2im}\cdot\left(2\cdot(1-2^{-m}) - 2^{-m}\cdot(1 - 2^{-2m}) \right) = 
2^{-2im}\cdot\left(2 - 3\cdot2^{-m} + 2^{-3m} \right) > 2^{-2im}/2.
$$

So
$$
\left(2\widetilde C\right)^{-1}\cdot 2^{-2im} < A_j < 2\widetilde C\cdot 2^{-2im}.
$$

Therefore
$$
A_{N,k} < 2\widetilde C\cdot2^{-2im}\cdot2^{-2r-1} /\left(1 - 2^{-m}\right) = \widetilde C\cdot2^{-2N}/(1-2^{-m}) 
< 2\widetilde C\cdot2^{-2N}
$$
and
$$
A_{N,k} > \left(2\widetilde C\right)^{-1}\cdot 2^{-2im}\cdot2^{-2r-1} /\left(1 - 2^{-m}\right) = \left(4\widetilde C\right)^{-1}\cdot2^{-2N}/\left(1 - 2^{-m}\right) > \left(4\widetilde C\right)^{-1}\cdot2^{-2N}.
$$

So we can use Lemma~\ref{main-lemma} for $C = 4\widetilde C.$ Let $f_{N,k}$ be functions given by the lemma.

\begin{figure}[h]
\center
\includegraphics{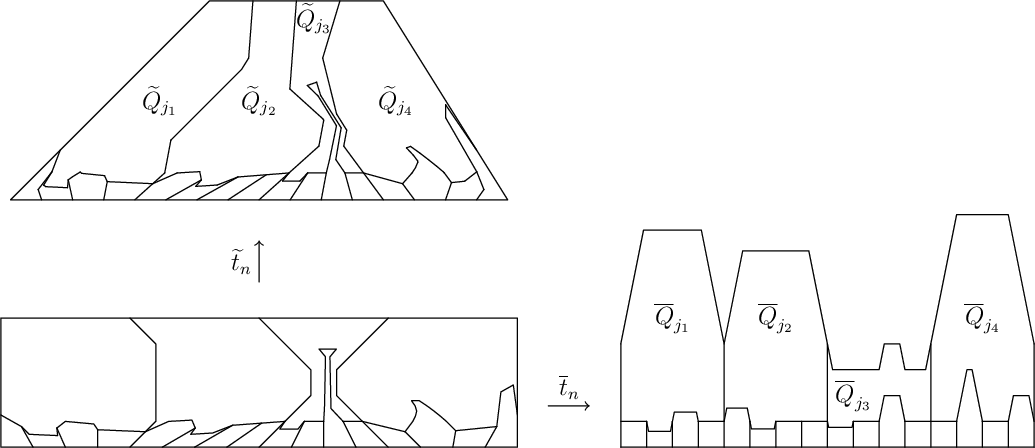}
\caption{Mappings $\overline t_n: \sqrt{S_n\cdot 2^{m}}\cdot R\to \overline T_n$ and $\widetilde t_n: \sqrt{S_n\cdot 2^{m}}\cdot R\to \widetilde T_n.$ We define a polygon $\widetilde Q_j$ as the image of the polygon $\overline Q_j$ under the mapping $\widetilde t_n\circ\overline t_n^{-1}.$ On the right $2^{m}$ top polygons $\overline Q_j$ are shown. The case $m=2.$}
\label{t-overline-tilde}
\end{figure}

Now we define polygons $\overline Q_j$. We already defined the vertical part of their boundaries. Let $\left(k\cdot2^{-im}, 2^{-im}\right)$ be the right top corner of $\overline Q_j.$ We define the upper part of the boundary as a graph of the function $f_{im, k}.$ We define the lower part as a union of graphs of functions $f_{i(m+1), l},$ where $(k-1)\cdot2^m \le l \le k\cdot2^{m}$ (Figure~\ref{t-overline-tilde}, on the right). This union is a continuous curve by condition~\ref{mlp3} of Lemma~\ref{main-lemma}.

The area of a polygon $\overline Q_j$ equals $A_j$ by equality~(\ref{area-equation}) and property~\ref{mlp5} of Lemma~\ref{main-lemma} (to see this sum up the equality in \ref{mlp5} for all admissible pairs $(N,k)$ such that $R(N,k)\subset t_n^{-1}(Q_j)$).

\smallskip
\noindent{\bf Step 3.} On this step we define polygons $\widetilde Q_j.$

Let $n\in\lbrace1,2,3,4\rbrace$ be the number that was fixed on the previous step.
Let $\overline T_n$ be the union of polygons $\overline Q_j.$ The polygon $\overline T_n$ is a union of regions bounded by graphs of functions $f_{m, l},$ where $1\le l \le 2^m,$ and $x$-axis. 

Functions $f_{m,l}$ are piecewise linear and $8C$-Lipschitz by property~\ref{mlp2} of Lemma~\ref{main-lemma}, are at least $2^{-m}/8C$ at each point by property~\ref{mlp4} and equal $2^{-m}$ at the endpoints by property~\ref{mlp3}. So by Lemma~\ref{square-to-graph} there exists a mapping $\overline t_n: \sqrt{S_n\cdot 2^{m}}\cdot R\to \overline T_n$ (Figure~\ref{t-overline-tilde}) such that
\begin{enumerate}[label = \arabic*), ref = \arabic*)]
\item the left (respectively, the right) vertical side of the rectangle is mapped to the set $\{x = 0\}\cap \overline T_n$ (respectively, $\{x = 1\}\cap \overline T_n$);
\item the lower side of the rectangle is mapped to the set $\{y = 0\}\cap \overline T_n;$
\item the restriction of $\overline t_n$ on the left, on the right and on the lower sides are affine;
\item if two points on the boundary of the rectangle lie on the same vertical line, their images also lie on the common vertical line;
\item $\overline t_n$ preserves area;
\item $\overline t_n$ is piecewise affine and $L_1$-biLipschitz, where $L_1$ depends only on $L$.
\end{enumerate}

We set
$$
\widetilde Q_j = \widetilde t_n\left(\overline t_n^{-1}(\overline Q_j)\right).
$$

By definition $\widetilde T_n = \bigcup\limits_{Q_j\subset T_n}\widetilde Q_j$.

Since the mappings $\widetilde t_n$ and $\overline t_n$ preserve area,
$$
\mathrm{Area}(\widetilde Q_j) = \mathrm{Area}(\overline Q_j) = A_j.
$$

\smallskip
\noindent{\bf Step 4.} On this step we construct mappings $\widetilde q_{j}:\widetilde\lambda_{j}\cdot B \to\widetilde Q_j.$ 

First we construct mappings
$\overline q_{j}:\widetilde\lambda_{j}\cdot B \to\overline Q_j.$

Recall that $n\in\lbrace1,2,3,4\rbrace$ is a number fixed on step 2.

By definition the boundary of a polygon $\overline Q_j$ consists of a graph of the function $f_{im, k}$, of a union of graphs of functions $f_{i(m+1), l},$ where $(k-1)\cdot2^m \le l \le k\cdot2^{m},$ and two vertical segments lying on the vertical lines $x = (k-1)\cdot2^{-im}$ and $x=k\cdot2^{-im}.$ These functions are piecewise linear and $8C$-Lipschitz by property~\ref{mlp2} of Lemma~\ref{main-lemma}. In each point of the segment $[(k-1)\cdot2^{-im}; k\cdot2^{-im}]$ the difference between the function $f_{im, k}$ and the corresponding function $f_{i(m+1), l}$ is at least $2^{-im}/8C$ by property~\ref{mlp4} of Lemma~\ref{main-lemma}, and in the point $k\cdot2^{-im}$ this difference is at most $2^{-im}$ by property~\ref{mlp3}. So by Lemma~\ref{square-to-graph2} there exists a mapping $\overline q_j: \widetilde\lambda_{j}\cdot B \to\overline Q_j$ such that
\begin{enumerate}[label = \arabic*), ref = \arabic*)]
\item the left (respectively, the right) vertical side of the square is mapped to the left (respectively, to the right) vertical segment of the boundary of the polygon $\overline Q_j$;
\item the upper side of the square is mapped to the graph of the function $f_{im, k}$;
\item the restriction of the mapping $\overline t_n$ on the left side and on the right side of the square are affine;
\item a composition of a restriction of $\overline t_n$ on the lower or on the upper side with the orthogonal projection to the $x$-axis is affine;
\item $\overline q_j$ preserves area;
\item $\overline q_j$ is piecewise affine and $L_2$-biLipschitz, where $L_2$ depends only on $L$.
\end{enumerate}

We define mappings $\widetilde q_j = \widetilde t_n\circ\overline t_n^{-1}\circ\overline q_j.$

\smallskip
\noindent{\bf Step 5.} Finally we check the needed properties of the constructed polygons and mappings.

We set $\widetilde M = L_3\cdot L_1\cdot L_2,$ where $L_3$ bounds from above biLipschitz constants of mappings $\widetilde q_0$ and $\widetilde t_n$ for $n=1,2,3,4.$ Then the mappings $\widetilde q_j$ are $\widetilde M$-biLipschitz.

Mappings $\widetilde q_j$ preserve area and are piecewise affine, since they were defined as a composition of such mappings.

Now we check property~\ref{combinatorial-equivalence}. A relation $\left(\widetilde q_{j}\circ\alpha_{\widetilde\lambda_{j}}\right)^{-1}\circ\left(\widetilde q_{l}\circ\alpha_{\widetilde\lambda_{l}}\right)$ is
\begin{itemize}
\item a relation between empty sets if the polygons $\widetilde Q_j$ and $\widetilde Q_l$ do not intersect;
\item the identity if $j=l;$
\item the mapping from a vertex of a square to a vertex of a square if the polygons $\widetilde Q_j$ and $\widetilde Q_l$ have a common point;
\item an affine mapping from a side of a square to the side of a square in other cases.
\end{itemize}
All these properties are also satisfied for mappings $q_j$ and quadrilaterals $Q_j$ by construction. The equality
$$
\left(\widetilde q_{j}\circ\alpha_{\widetilde\lambda_{j}}\right)^{-1}\circ\left(\widetilde q_{l}\circ\alpha_{\widetilde\lambda_{l}}\right)
=
\left(q_{j}\circ\alpha_{\lambda_{j}}\right)^{-1}\circ\left(q_{l}\circ\alpha_{\lambda_{l}}\right)
$$ 
is satisfied because the tiling into polygons $\widetilde Q_j$ is combinatorially equivalent to the tiling into quadrilaterals $Q_j.$

Property~\ref{interior-cover} is satisfied because initially we divided the square $B$ into the rectangle $\widetilde Q_0$ and four trapezoids each of which is covered by polygons $\widetilde Q_j.$

Property~\ref{boundary-limit} is satisfied because $t_n\circ\alpha_{\sqrt{S_n^{-1}\cdot2^{-m}}}\circ \widetilde t_n^{-1}$ is the identity on a side of the square for all $n=1,2,3,4$ and the diameter of polygons $Q_j$ and $\widetilde Q_j$ converges to zero if $j\to\infty.$
\end{proof}

\begin{proof}[Proof of Proposition~\ref{jacobian-problem}]
By Lemmas~\ref{skew-lemma} and~\ref{trapezoid-lemma} there exists a real number $M'$ and for all $j\ge0$ there exist mappings $q_j':\lambda_j\cdot B\to Q_j$ such that for all $j\ge0$
\begin{itemize}
\item the mappings $q_j$ and $q_j'$ coincide on the boundary of the square;
\item the mapping $q_j'$ is piecewise affine, has a constant jacobian and is $M'$-biLipschitz.
\end{itemize}

Let $\widetilde q_j:\widetilde\lambda_j\cdot B\to\widetilde Q_j$ be the mappings constructed in Lemma~\ref{half-theorem}.

Then a mapping which on each quadrilateral $Q_j$ is defined as
$$
\widetilde q_j\circ\alpha_{\widetilde\lambda_j/\lambda_j}\circ\left(q_j'\right)^{-1},
$$
is sought-for.
\end{proof}

Two mappings $f$ and $g$ are called {\it similar}, if there exist similarities $s_1$ and $s_2$ such that $f = s_1\circ g \circ s_2.$

\begin{lemm}
\label{areafication}
Let $\mathcal P$ be a finite set of similarity classes of piecewise affine mappings from $\partial B$ to the plane and $K$ be a real number. Then there exists a real number $\widetilde K$ such that for any $K$-biLipschitz mapping $\Phi:B\to\mathbb R^2$ such that
$$\left[\Phi\circ q_j\big|_{\lambda_j\cdot\partial B}\right]\in\mathcal P,$$
and mappings $\widetilde q_j$ satisfying all properties listed in Lemma~\ref{half-theorem} for $\widetilde C = K^2$ and
$$A_j = \mathrm{Area}(\Phi(Q_j))$$
there exists an area preserving piecewise affine on the interior of the square $\widetilde K$-biLipschitz mappings $\widetilde\Phi:B\to\mathbb R^2$ such that
\begin{itemize}
\item
$
\left(\widetilde\Phi\circ\widetilde q_j\right) (\widetilde \lambda_j\cdot z) = 
\left(\Phi\circ q_j\right) ( \lambda_j\cdot z)$
for any $j$ and any $z\in\partial B$;
\item
$\widetilde\Phi(z) = \Phi(z)$ for any $z\in\partial B.$
\end{itemize}
\end{lemm}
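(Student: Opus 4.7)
My plan is to build $\widetilde{\Phi}$ one quadrilateral at a time, defining $\widetilde{\Phi}\big|_{\widetilde Q_j}\colon\widetilde Q_j\to\Phi(Q_j)$ as an area preserving piecewise affine biLipschitz map matching the prescribed boundary values. The local pieces glue correctly because of the combinatorial equivalence property~\ref{combinatorial-equivalence} of Lemma~\ref{half-theorem}; inspecting the proof of that lemma one sees that the subdivisions of $\partial B$ induced by the two tilings $\{Q_j\}$ and $\{\widetilde Q_j\}$ coincide \emph{pointwise} (both $t_n$ and $\widetilde t_n\circ\alpha_{\sqrt{S_n\cdot 2^m}}$ restrict to the same affine map on the lower side of the reference rectangle $R$), which also reconciles the two boundary conditions in the statement on $\partial B$.

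For each similarity class $[p_k]\in\mathcal P$ (finitely many), fix once and for all an area preserving piecewise affine biLipschitz extension $P_k\colon B\to D_k$ of a normalized representative $p_k$ whose enclosed region $D_k$ has area $4$. For every $j$, let $k$ denote the class of $\Phi\circ q_j|_{\lambda_j\partial B}$, write $\Phi\circ q_j|_{\lambda_j\partial B}=s_1^{(j)}\circ p_k\circ s_2^{(j)}$ with similarities $s_2^{(j)}$ of scale $1/\lambda_j$ and $s_1^{(j)}$ of scale $\widetilde\lambda_j$, and let $q_j'\colon\lambda_j B\to Q_j$ be the piecewise affine biLipschitz replacement of $q_j$ with constant Jacobian that coincides with $q_j$ on $\partial(\lambda_j B)$, constructed in the proof of Proposition~\ref{jacobian-problem}. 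Define
\[\Phi'|_{Q_j}=s_1^{(j)}\circ P_k\circ s_2^{(j)}\circ(q_j')^{-1},\qquad \widetilde\Phi\big|_{\widetilde Q_j}=\Phi'\circ q_j'\circ\alpha_{\lambda_j/\widetilde\lambda_j}\circ\widetilde q_j^{-1}.\]
The Jacobian of $\widetilde\Phi|_{\widetilde Q_j}$ is the product of the four factors $1$, $(\lambda_j/\widetilde\lambda_j)^2$, $\mathrm{Area}(Q_j)/(4\lambda_j^2)$ and $A_j/\mathrm{Area}(Q_j)$, which simplifies to $A_j/(4\widetilde\lambda_j^2)=1$, so $\widetilde\Phi$ is area preserving on each $\widetilde Q_j$. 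For $w=\widetilde q_j(\widetilde\lambda_j z)$ with $z\in\partial B$ one computes $\alpha_{\lambda_j/\widetilde\lambda_j}(\widetilde\lambda_j z)=\lambda_j z$, $q_j'(\lambda_j z)=q_j(\lambda_j z)\in\partial Q_j$, and $\Phi'=\Phi$ on $\partial Q_j$ by construction of the $P_k$ and the choice of similarities, so $\widetilde\Phi(w)=\Phi(q_j(\lambda_j z))$, matching the prescribed boundary value. Uniform biLipschitz control follows from finiteness of $\mathcal P$, the bound $\widetilde\lambda_j^2/\lambda_j^2$ controlled by $\widetilde C$ via $A_j/\mathrm{Area}(Q_j)\in[\widetilde C^{-1},\widetilde C]$, and the uniform constants supplied by Lemma~\ref{half-theorem} and Proposition~\ref{jacobian-problem}.

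The crux of the argument is the existence of the area preserving $P_k$ for each class. This is a restricted instance of the same type of problem as the main theorem, but with fixed piecewise linear boundary data and no uniformity required across the family. For each fixed $k$ one can produce $P_k$ by hand, for instance by first building any piecewise affine biLipschitz extension of $p_k$ (e.g.\ via Tukia's construction applied to $p_k$) and then correcting the resulting piecewise constant Jacobian to the constant value $1$ by an ad hoc combination of the techniques from the proofs of Proposition~\ref{jacobian-problem} and Lemma~\ref{half-theorem} applied to a refinement of the canonical tiling adapted to the triangulation of $p_k$; the constants involved may depend on $k$, but finiteness of $\mathcal P$ makes this sufficient.
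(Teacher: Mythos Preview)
Your construction is essentially the paper's: once you observe that the factors $q_j'$ and $(q_j')^{-1}$ cancel, your formula collapses to $\widetilde\Phi\big|_{\widetilde Q_j} = s_1^{(j)} \circ P_k \circ (\text{similarity of scale }\widetilde\lambda_j^{-1}) \circ \widetilde q_j^{-1}$, which is exactly the paper's formula~\eqref{Phi-definition}. The detour through $\Phi'$ and $q_j'$ is harmless but unnecessary.

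Two points deserve more care. First, for the existence of the area-preserving extensions $P_k$ the paper simply cites~\cite{BD}, whereas your sketch (``Tukia plus ad hoc corrections via Proposition~\ref{jacobian-problem} and Lemma~\ref{half-theorem}'') is vague; you are right, however, that finiteness of $\mathcal P$ means any per-class construction with no uniformity suffices. Second, and more substantively, your argument only yields a uniform \emph{local} biLipschitz bound on each $\widetilde Q_j$. Convexity of $B$ globalizes the forward Lipschitz constant, but the image $\Phi(B)$ need not be convex, so the global inverse Lipschitz bound requires a separate argument. The paper supplies one: connect two image points by a straight segment, use the local bound on the portions inside $\Phi(B)$, and use the $K$-biLipschitz bound on $\Phi$ for the first and last intersection points with $\Phi(\partial B)$, obtaining $\widetilde K = \max(\widetilde K_0, K)$. (Alternatively one could note that $\Phi(B)$ is $K^2$-quasiconvex.) This is exactly where the hypothesis $\mathrm{bilip}(\Phi)\le K$ enters the final constant, so you should not omit it.
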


\begin{proof}
For each class $k\in\mathcal P$ we choose an arbitrary area preserving piecewise linear mapping $p_k:B\to\mathbb R^2$ such that $[p_k\big|_{\partial B}] = k$ (for the existence of such mappings see for example~\cite{BD}).

Then for each $j$ there exists $k$ and a similarity $s_j$ such that
\begin{equation}
\label{phi-pk}
\Phi\circ q_j(\lambda_j\cdot z) = s_j\circ p_k (z)
\end{equation}
for any $z\in\partial B.$ Then we define a mapping $\widetilde\Phi$ on each polygon $\widetilde Q_j$
\begin{equation}
\label{Phi-definition}
\widetilde\Phi\big|_{\widetilde Q_{j}} = s_{j}\circ p_k\circ \alpha_{\widetilde\lambda_{j}^{-1}} \circ \widetilde q_{j}^{-1}.
\end{equation}

This mapping is defined correctly on the intersections of polygons $\widetilde Q_j$ by~(\ref{phi-pk}) and property~\ref{combinatorial-equivalence} of Lemma~\ref{half-theorem}.

Now we check that $\widetilde\Phi$ preserves area and is biLipschitz.

By construction $\widetilde \Phi(\widetilde Q_j) = \Phi(Q_j)$ and $\mathrm{Area}\left(\widetilde Q_j\right) = \mathrm{Area}\left(\widetilde \Phi\left(\widetilde Q_j\right)\right).$

The mappings $p_k$ and $\widetilde q_{j}$ preserve area, and other mappings on the right side of equation~(\ref{Phi-definition}) are similarities. Since areas of polygons $\widetilde Q_j$ and $\widetilde \Phi(\widetilde Q_j)$ are equal, the mapping $\widetilde\Phi$ preserve area.

Since each mapping $p_k$ is piecewise affine, preserves area and it is embedding on the boundary, it is actually embedding hence it is biLipschitz. Since the mappings $p_k$ are in finite number, they all are biLipschitz with a common constant $L'$.

Let $\widetilde K_0 = L'\cdot\widetilde M$, where $\widetilde M$ is given by Lemma~\ref{half-theorem}. Now we prove that the mapping $\widetilde\Phi\big|_{\widetilde Q_j}$ is $\widetilde K_0$-biLipschitz. The mapping $\widetilde q_j$ is $\widetilde M$-biLipschitz. Since the mapping on the left side of the equality~(\ref{Phi-definition}) preserves area, the mappings $p_k$ and $\widetilde q_j$ on the right side preserve area, then contributions to the biLipschitz constant of similarities on the right side cancel. Therefore $\widetilde\Phi\big|_{\widetilde Q_j}$ is $\left(L'\cdot\widetilde M\right)$-biLipschitz.

Since the square $B$ is convex and the mappings $\widetilde\Phi\big|_{\widetilde Q_j}$ are $\widetilde K_0$-Lipschitz, then by property~\ref{interior-cover} of Lemma~\ref{half-theorem} $\widetilde\Phi$ is $\widetilde K_0$-Lipschitz.

From the Lipschitz property it follows that the mapping $\widetilde\Phi$ extends to the boundary of the square by continuity. Now we check that $\widetilde \Phi\big|_{\partial B} = \Phi\big|_{\partial B}.$ Let $z\in\partial B$. If a sequence $z_n\in Q_{j_n}$ converges to $z\in\partial B,$ then since $\widetilde \Phi(\widetilde Q_j) = \Phi(Q_j)$ there exists a sequence $\widetilde z_n\in\widetilde Q_{j_n}$ such that $\Phi(z_n) = \widetilde \Phi(\widetilde z_n).$ By property~\ref{boundary-limit} of Lemma~\ref{half-theorem} the sequence $\widetilde z_n$ converges to $z$. Therefore $\Phi (z) = \widetilde \Phi(z).$

Now we show that the mapping $\widetilde\Phi^{-1}$ is $\left(\max\left(\widetilde K_0, K\right)\right)$-Lipschitz. Let $z_0,z_1$ be interior points of $B,$ $w_i = \widetilde\Phi(z_i)$ if $i=0,1.$ We connect $w_0$ with $w_1$ by a straight line segment $I$ of length $\ell.$ 

If the segment $I$ lies inside the region bounded by the curve $\Phi(\partial B),$ it can be divided into a countably many segments of the form $I\cap\widetilde\Phi(\widetilde Q_j)$ on which the mapping $\widetilde\Phi^{-1}$ is $\widetilde K_0$-Lipschitz. Therefore the preimage of $I$ under $\widetilde \Phi$ is a curve of length at most $\widetilde K_0\cdot\ell.$ So 
$$
\mathrm{dist}(z_0, z_1) \le \widetilde K_0\cdot\mathrm{dist}(w_0, w_1).
$$

If the segment $I$ intersects the curve $\Phi(\partial B),$ let $w_{1/3}$ be the nearest to $w_0$   point of the intersection $I\cap\Phi(\partial B),$ and $w_{2/3}$ be the nearest point to $w_1.$ Let $z_{1/3}\in\partial B$ and $z_{2/3}\in\partial B$ be such points that $\Phi(z_{1/3}) = w_{1/3}$ and $\Phi(z_{2/3}) = w_{2/3}.$ Similarly to the above argument
$$
\mathrm{dist}(z_0, z_{1/3}) \le \widetilde K_0\cdot\mathrm{dist}(w_0, w_{1/3}),\ 
\mathrm{dist}(z_{2/3},z_1) \le \widetilde K_0\cdot\mathrm{dist}(w_{2/3},w_1).
$$

Since $\Phi$ is $K$-biLipschitz
$$
\mathrm{dist}(z_{1/3},z_{2/3}) \le K\cdot\mathrm{dist}(w_{1/3},w_{2/3}).
$$

Summing up the obtained inequalities, we get
$$
\mathrm{dist}(z_0, z_1) \le \max\left(\widetilde K_0, K\right)\cdot\mathrm{dist}(w_0, w_1).
$$

Therefore the mapping $\widetilde \Phi$ is $\widetilde K$-biLipschitz, where $\widetilde K=\max\left(\widetilde K_0, K\right).$
\end{proof}

\begin{proof}[Proof of Proposition~\ref{square-extension-lemma}]
Follows immediately from Lemmas~\ref{Tukia-theorem},~\ref{half-theorem} and~\ref{areafication}.
\end{proof}

\subsection{Merging elements of the tiling}
\label{merge-tiling}
Let $N$ be a positive integer. Let $q_j:\lambda_j\cdot B\to Q_j$ be the mappings from the construction in the beginning of Section~\ref{square-extension} for the given $m.$ Let $q'_j:\lambda'_j\cdot B\to Q'_j$ be mappings from the same construction but for $m' = Nm.$

\begin{lemm}
\label{merging-elements-of-tiling-lemma}
Let $\mathcal P$ be a finite set of similarity classes of piecewise affine embeddings of $\partial B$ to the plane and $N$ be a positive integer. Then there exists a finite set $\mathcal P'$ of similarity classes of piecewise affine mappings from $\partial B$ to the plane such that for any mapping $\Phi:B\to\mathbb R^2$ such that
$$\left[\Phi\circ q_j\big|_{\lambda_j\cdot\partial B}\right]\in\mathcal P$$
the following is true
$$
\left[\Phi\circ q_j'\big|_{\lambda_j'\cdot\partial B}\right]\in\mathcal P'.
$$
\end{lemm}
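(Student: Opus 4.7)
The plan is to exploit the fact that the finer tiling is combinatorially a refinement of the coarser one. Precisely, since $B'_{i'} = (1-2^{-i'Nm})\cdot B = B_{i'N}$, every $N$-th concentric square of the $m$-tiling coincides with a concentric square of the $(Nm)$-tiling, and every radial side of either tiling lies on a line through the origin. I would first show that the boundary $\partial Q'_{j'}$ of any finer quadrilateral is contained in the coarser skeleton $\bigcup_{j} \partial Q_j$. Indeed, for $Q'_{j'}$ in the finer annulus $B'_{i'+1}\setminus B'_{i'} = B_{(i'+1)N}\setminus B_{i'N}$, its two annular arcs lie on $\partial B_{i'N}$ and $\partial B_{(i'+1)N}$, which are coarser square boundaries; and each of its two radial sides, being a segment of a ray from the origin, restricts on each coarser sub-annulus $B_{i'N+s+1}\setminus B_{i'N+s}$ (with $s=0,\dots,N-1$) to a coarser radial side---because the finer angular marks $k'/2^{i'Nm}$ form a subset of the coarser ones $k/2^{(i'N+s)m}$ via $k = k'\cdot 2^{sm}$.

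Next, using the canonical parametrization $q'_{j'}: \lambda'_{j'}\cdot B \to Q'_{j'}$, I would enumerate up to similarity the possible intersection patterns $(q'_{j'})^{-1}\bigl(\bigcup_{j}\partial Q_j\bigr)\cap \lambda'_{j'}\cdot B$ inside the standard square. This is a piecewise linear graph cut out by $N-1$ intermediate concentric sub-squares and by the coarser angular marks at each of the $N$ sub-levels; I would argue that, as $j'$ ranges over all finer cells, it realizes only finitely many similarity classes, the count depending solely on $N$ and $m$. The classes are indexed by the corner/edge/interior type of $Q'_{j'}$ together with its radial sub-level and angular position within the finer annulus, and the finiteness follows from the self-similar scaling of the tiling from one annular level to the next combined with the uniform diagonal-based choice of $q_j$ and $q'_{j'}$. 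Combining this with the hypothesis $[\Phi\circ q_j|_{\lambda_j\partial B}]\in\mathcal P$ and the piecewise affine nature of $\Phi$ on each coarser cell (so that $\Phi$ restricted to $\partial Q_j$ is a representative of a class in $\mathcal P$, up to similarity), the map $\Phi\circ q'_{j'}|_{\lambda'_{j'}\partial B}$ assembles as a piecewise linear concatenation of finitely many pieces, each a similarity image of the restriction of an element of $\mathcal P$ to an arc of $\partial B$. Taking $\mathcal P'$ to be the (finite) collection of similarity classes of all such assemblies yields the required set.

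The main obstacle is the enumeration in the second step: although by the first step $\partial Q'_{j'}$ lies in $\bigcup_{j}\partial Q_j$, the coarser trapezoids $Q_j$ themselves are not globally similar to one another (their base ratios $(1-2^{-im})/(1-2^{-(i+1)m})$ depend on the annulus index $i$), so it is not immediate that the pullback patterns in $\lambda'_{j'}\cdot B$ form only finitely many similarity types. The resolution leverages the specific self-similar structure of the tiling under the homothety relating consecutive annuli far from the origin, together with the uniform definition of the parametrizations $q_j$ and $q'_{j'}$ through the same diagonal decomposition, which together force the pullback configurations to collapse into a finite list of similarity classes.
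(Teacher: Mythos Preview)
Your proposal identifies the correct starting observation---that $\partial Q'_{j'}\subset\bigcup_j\partial Q_j$---but the second step contains a genuine gap that you yourself flag and then do not resolve. The claim that the pullback configurations $(q'_{j'})^{-1}\bigl(\bigcup_j\partial Q_j\bigr)$ fall into finitely many \emph{similarity} classes is false. The trapezoids $Q'_{j'}$ in different annuli have base ratios $(1-2^{-i'Nm})/(1-2^{-(i'+1)Nm})$ that genuinely vary with $i'$, and consequently the normalized parametrizations $q'_{j'}\circ\alpha_{\lambda'_{j'}}$ differ from one annulus to the next; the preimages of the interior annular arcs therefore sit at positions that depend on $i'$. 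Your proposed resolution---an asymptotic homothety ``far from the origin''---does not exist here: the ratio of consecutive annulus widths tends to $1$, not to a nontrivial fixed constant, so no exact self-similarity collapses the infinitely many cases.

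The paper avoids this by never asking for similarity of the full interior skeleton. Its argument has two independent pieces. First, for a \emph{fixed} $Q'_j$ one propagates across shared edges: once the similarity representative on one $Q_l\subset Q'_j$ is pinned down, the compatibility $p_{k_0}\circ q_{l_0}^{-1}=s\circ p_{k_1}\circ q_{l_1}^{-1}$ on a common side leaves only finitely many choices for the adjacent cell, so the set $\mathcal P_j$ of compatible skeleton maps is finite for each $j$. Second, and this is the point you are missing, the restriction $\partial\mathcal P_j$ depends only on how $\partial Q'_j$ is subdivided by the $\partial Q_l$, and these subdivision \emph{ratios} are the same for every $j\neq0$: the outer side of $Q'_j$ is always cut into $2^{(N-1)m}$ equal pieces, each radial side is always cut at the parameters $(1-2^{-sm})/(1-2^{-Nm})$, and the inner side is undivided. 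Since $q'_j$ is affine on each side, these ratios transfer verbatim to $\lambda'_j\cdot\partial B$. Together with the fact that the interior combinatorics (hence the propagation constraints) are identical for all $j\neq0$, this gives $\partial\mathcal P_{j_0}=\partial\mathcal P_{j_1}$ whenever $q'_{j_0}$ and $q'_{j_1}$ send the same side of $B$ to the longer base. That yields finitely many boundary classes without ever needing the interior skeletons to be similar.

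A minor remark: you consistently call $Q'_{j'}$ the ``finer'' cells and $Q_j$ the ``coarser'' ones, but in the paper's setup it is the other way around---each $Q'_{j'}$ is a union of several $Q_l$. This does not affect your mathematics, but it makes the write-up harder to follow.
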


\begin{proof}
In each class $k\in\mathcal P$ we fix a representative $p_k.$

\begin{figure}[h]
\center
\includegraphics{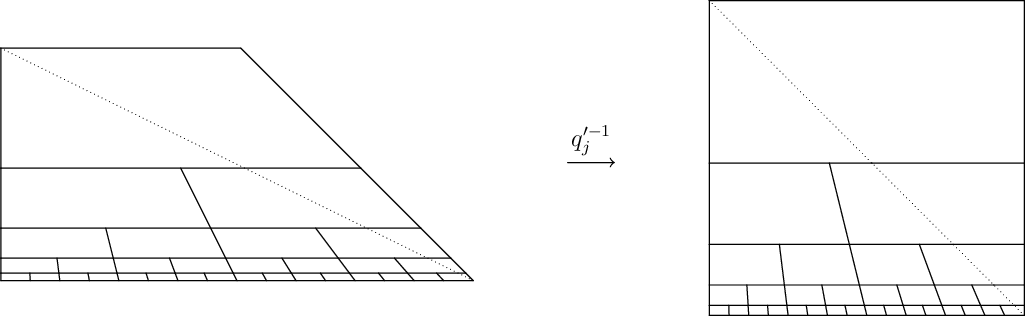}
\caption{The preimage of the tiling of a quadrilateral $Q_j'$ into quadrilaterals $Q_l$ under $q_j'.$ The case $m=1,$ $N=5.$}
\label{m-multiplying}
\end{figure}

Each quadrilateral $Q'_j$ is tiled into quadrilaterals $Q_l.$ 
If $Q_{l_0}$ and $Q_{l_1}$ have a common side, then for each $p_{k_0}$ there exists only finite number of similarities $s$ such that
$$
p_{k_0}\circ q_{l_0}^{-1}\big|_{Q_{l_0}\cap Q_{l_1}} = s\circ p_{k_1}\circ q_{l_1}^{-1}\big|_{Q_{l_0}\cap Q_{l_1}}
$$
for some $p_{k_1}.$ Repeating this argument for all adjacent quadrilaterals $Q_l$ contained in $Q'_j$ we obtain that the set of mappings
$$F:\bigcup\limits_{Q_l\subset Q_j'}\left(q'_j\right)^{-1}(\partial  Q_l)\to\mathbb R^2$$
such that
$$\left[F\circ \left(q'_j\right)^{-1}\circ q_l\big|_{\lambda_l\cdot\partial B}\right]\in\mathcal P
\text{ if }
Q_l\subset Q'_j
$$
is a union of finite number of similarity classes. We denote this set of similarity classes by $\mathcal P_j$ and let
$$
\partial\mathcal P_j
=
\left\{
F\big|_{\lambda_j'\cdot\partial B}:\ \left(F:\bigcup\limits_{Q_l\subset Q_j'}\left(q'_j\right)^{-1}(\partial  Q_l)\to\mathbb R^2\right),\ [F]\in \mathcal P_j
\right\}.
$$

If $j_0,j_1\neq 0$  the tilings of quadrilaterals $Q_{j_0}'$ and $Q_{j_1}'$ are combinatorially equivalent (Figure~\ref{m-multiplying}). Also the corresponding sides of quadrilaterals $Q_{j_0}'$ and $Q_{j_1}'$ for $j\neq0$ are divided into segments in the same ratios. Since mappings $q_j'$ and $q_l$ are affine on each side of the square, the corresponding sides of squares $\lambda_{j_0}'\cdot B$ and $\lambda_{j_1}'\cdot B$ are divided in the same ratios. Therefore, if the mappings $q_{j_0}'$ and $q_{j_1}'$ map the same side of the square (respectively the upper, lower, right or left) to the longer base of a trapezoid, then
$$
\partial \mathcal P_{j_0} = \partial \mathcal P_{j_1}.
$$

So $\mathcal P' = \bigcup\limits_j\partial\mathcal P_j$ is finite.

\end{proof}

\subsection{Correcting the mapping on the square $r_0\cdot B$}
\label{correction-on-some-square}

This section is devoted to the proof of the following proposition.

\begin{prop}
\label{extension-to-annulus-lemma}
For any $r_0,$ $r_1,$ $L,$ where $0<r_0<r_1<1,$ there exists such $\widetilde K$ that any $L$-biLipschitz mapping $\phi:\partial B\to\mathbb R^2$, such that the curve $\phi(\partial B)$ bounds a region of area 4 and contains a square $r_1\cdot B$, can be extended to an area preserving locally piecewise affine on the interior of the square $\widetilde K$-biLipschitz mapping of $B$ which is the identity on $r_0\cdot B.$
\end{prop}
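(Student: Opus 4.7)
The plan is to first extend $\phi$ to a biLipschitz (not yet area preserving) map of $B$ that is already the identity on $r_0\cdot B$, and then to promote it to an area preserving map by adapting the machinery of Section~\ref{square-extension} to the annulus $B\setminus(r_0\cdot B)$.

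First apply Lemma~\ref{Tukia-theorem} to $\phi$ and use Lemma~\ref{merging-elements-of-tiling-lemma} to refine the Tukia parameter $m$ until $2^{-m}<(r_1-r_0)/K(L)$, where $K(L)$ is the biLipschitz constant supplied by Tukia. Call the resulting $K$-biLipschitz extension $\Phi_0:B\to\Omega$. Every point of $\partial Q_0=(1-2^{-m})\cdot\partial B$ lies at Euclidean distance $2^{-m}$ from $\partial B$, so $\Phi_0(\partial Q_0)$ lies within Hausdorff distance $K\cdot 2^{-m}$ of $\partial\Omega$. The hypothesis $r_1\cdot B\subset\Omega$ gives $\mathrm{dist}(r_0\cdot B,\partial\Omega)\ge r_1-r_0$, and the strict inequality $K\cdot 2^{-m}<r_1-r_0$ then forces $r_0\cdot B\subset\mathrm{int}\,\Phi_0(Q_0)$ (and trivially $r_0\cdot B\subset\mathrm{int}\,Q_0$).

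Next modify $\Phi_0$ only on the central square $Q_0$ to produce a biLipschitz map $\Phi_1:B\to\Omega$ equal to $\Phi_0$ off $Q_0$, equal to the identity on $r_0\cdot B$, and still mapping $Q_0$ onto $\Phi_0(Q_0)$. Divide the source annulus $Q_0\setminus(r_0\cdot B)$ into four isosceles trapezoids by segments joining corresponding corners of $\partial(r_0\cdot B)$ and $\partial Q_0$, and divide the target annulus $\Phi_0(Q_0)\setminus(r_0\cdot B)$ analogously by segments from the images of the corners of $\partial Q_0$ to the matching corners of $\partial(r_0\cdot B)$. On each pair of corresponding pieces construct a piecewise affine biLipschitz map agreeing with $\Phi_0$ on the outer boundary and with the identity on the inner, using Lemmas~\ref{skew-lemma} and~\ref{trapezoid-lemma}. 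The choice of $m$ keeps both annuli comparably thick, yielding a biLipschitz constant depending only on $L,r_0,r_1$.

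Finally, convert $\Phi_1$ into an area preserving map by adapting Lemmas~\ref{half-theorem} and~\ref{areafication} to the annulus $B\setminus(r_0\cdot B)$ in place of the whole square $B$. The role of the central tile $\widetilde Q_0$ is played by the fixed inner square $r_0\cdot B$, on which $\Phi_1$ is already the identity and hence area preserving. The annular complement is tiled by the four trapezoids in $Q_0\setminus(r_0\cdot B)$ from the previous step together with Tukia's quadrilaterals $Q_j$ for $j\ge 1$; the decomposition into four outer macro-trapezoids $\widetilde T_n$ filling the slab between $\partial(r_0\cdot B)$ and $\partial B$ is the direct analogue of Step~1 of the proof of Lemma~\ref{half-theorem}. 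Setting $A_j=\mathrm{Area}(\Phi_1(Q_j))$, the rest of the proofs of Lemmas~\ref{half-theorem} and~\ref{areafication} carry over with only notational changes, producing the sought area preserving $\widetilde K$-biLipschitz map $\widetilde\Phi$.

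The principal obstacle is the annular interpolation of the second step. The outer boundary $\Phi_0(\partial Q_0)$ is piecewise affine but its shape depends on $\phi$, so the four-piece subdivision of the target annulus need not produce geometrically regular quadrilaterals. Control is obtained from two ingredients, both available from Step~1: Lemma~\ref{Tukia-theorem} restricts $\Phi_0(\partial Q_0)$ to finitely many similarity types, and the estimate $K\cdot 2^{-m}<r_1-r_0$ keeps the two boundary components of the target annulus well separated, so that each of the four quadrilateral pieces has bounded distortion in terms of $L$, $r_0$, and $r_1$.
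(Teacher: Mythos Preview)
Your overall strategy---fix the center first, then area-preserve on the annulus---reverses the paper's order, and the reversal creates a genuine obstruction in Step~3. In Step~1 of the proof of Lemma~\ref{half-theorem} the central tile $\widetilde Q_0$ is a \emph{rectangle} whose shape and position are dictated by the requirement $\mathrm{Area}(\widetilde T_n)=\sum_{Q_j\subset T_n}A_j$ for each of the four macro-trapezoids. If instead you force $\widetilde Q_0=r_0\cdot B$, the four trapezoids between $\partial(r_0\cdot B)$ and $\partial B$ are congruent and have equal area $1-r_0^2$, while the four target areas (each now including one of your four pieces of $\Phi_1(Q_0)\setminus(r_0\cdot B)$) depend on $\phi$ and are generically unequal. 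The area identity that drives the rest of Lemma~\ref{half-theorem} therefore fails, and the claim that the proof ``carries over with only notational changes'' is not correct. This is precisely why the paper proceeds in the opposite order: it first runs Lemmas~\ref{half-theorem} and~\ref{areafication} \emph{unchanged} to obtain an area preserving $\widetilde\Phi$ with a rectangular $\widetilde Q_0$, and only afterwards corrects the central piece to be the identity on $r_0\cdot B$ via the compactness arguments of Lemmas~\ref{square-travel} and~\ref{square-to-rectangle}.

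Your Step~2 also needs more than Lemmas~\ref{skew-lemma} and~\ref{trapezoid-lemma}: the four target pieces of $\Phi_0(Q_0)\setminus(r_0\cdot B)$ have a piecewise affine (not straight) outer side, so neither lemma applies directly. You correctly observe that $\Phi_0(\partial Q_0)$ lies in finitely many similarity classes and that the annulus is uniformly thick, but turning this into a uniform biLipschitz bound requires a compactness argument over the relative position of $r_0\cdot B$ inside $\Phi_0(Q_0)$---essentially the content of Lemma~\ref{square-travel}, which the paper develops for exactly this purpose.
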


Let $\Phi$ be a $K$-biLipschitz extension of $\phi$ provided by Lemma~\ref{Tukia-theorem}.
Let $r_1' = \frac{r_0+r_1}2.$ Let $N$ be such positive integer that
$$
K\cdot 2^{-Nm} < \frac{r_1-r_0}2
\quad\text{and}\quad
K^2\cdot 4\cdot 2^{-Nm} < 2 - 2r_1'.
$$

We can assume that $N$ depends only on $L$, $r_0$ and $r_1$ because $m = m(L)$ and $K = K(L).$ Then
$$
\Phi\left(\left(1-2^{-Nm}\right)\cdot B\right)\supset r_1'\cdot B 
\quad
\text{ and }
\quad
\mathrm{Area}\left(\Phi\left(\left(1-2^{-Nm}\right)\cdot B\right)\right) > 2\cdot (1+r_1').
$$

Now we apply Lemma~\ref{merging-elements-of-tiling-lemma} for $m'=Nm.$ The set $\mathcal P$ of similarity classes of mappings $\Phi\circ q_j'\big|_{\lambda_j'\cdot\partial B}$ for all possible $j$ is finite and this set depends only on $L$ and $N.$ Therefore by Lemma~\ref{areafication} there exists an area preserving locally piecewise affine on the interior of the square $\widetilde K(L,N)$-biLipschitz mapping $\widetilde\Phi:B\to\Phi(B)$ such that
\begin{itemize}
\item
$\widetilde\Phi\big|_{\partial B} = \phi;$
\item
$\widetilde\Phi\left(\widetilde Q_j\right) = \Phi(Q_j)$ for any $j$;
\item
$\left[\widetilde\Phi\circ \widetilde q_j\big|_{\widetilde\lambda_j\cdot \partial B}\right]\in\mathcal P$ for any $j,$
\end{itemize}
where the mappings $\widetilde q_j:\widetilde\lambda_j\cdot B\to\widetilde Q_j$ satisfy all properties listed in the statement of Lemma~\ref{half-theorem} for the tiling into quadrilaterals $Q_j'.$ We also recall that by Lemma~\ref{half-theorem} the mapping $\widetilde q_0$ is affine and that $\widetilde Q_0$ is a rectangle whose sides are parallel to the sides of $B.$

By construction
$$\widetilde Q_0\supset r_1'\cdot B + \widetilde O,$$
where $\widetilde O$ is the center of the rectangle $\widetilde Q_0,$ because $\mathrm{Area}\left(\widetilde Q_0\right) = \mathrm{Area}(\Phi(Q_0))>2\cdot (1+r_1').$ We also have
$$
\widetilde\Phi\left(\widetilde Q_0\right)\supset r_1'\cdot B.
$$

\begin{lemm}
\label{square-travel}
Let $P$ be a polygon on the plane, $\mathrm{Area}(P)=4,$ $f:\partial B\to\partial P$ be a piecewise linear homeomorphism, $r_0,r_1$ be such real numbers that $0<r_0<r_1<1,$ $C_0 \ge 1.$ Then there exists such $L'$ that for any $s\in[r_1;C_0]$ and for any isometric embedding $b:r_1\cdot B\to s\cdot P$ there exists an area preserving piecewise affine $L'$-biLipschitz mapping $F:s\cdot B\to s\cdot P$ such that
\begin{itemize}
\item $F\big|_{r_0\cdot B} = b\big|_{r_0\cdot B};$
\item $F(s\cdot z) = s\cdot f(z)$ for any $z\in\partial B.$
\end{itemize}
\end{lemm}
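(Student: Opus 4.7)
The plan is to define $F$ piece by piece: directly as $b$ on the inner square $r_0\cdot B$, and as a carefully constructed area preserving piecewise affine map on the annular region $A_{\mathrm{src}}=s\cdot B\setminus r_0\cdot B$, matching $b$ on $\partial(r_0\cdot B)$ and $sf$ on $\partial(s\cdot B)$. The target annular region $A_{\mathrm{tgt}}=s\cdot P\setminus b(r_0\cdot B)$ has the same area $4s^2-4r_0^2$ as $A_{\mathrm{src}}$, so area preservation is compatible with the Jacobian constraint.

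I would partition $A_{\mathrm{src}}$ into four trapezoids $T_1,\dots,T_4$ by the two diagonals of $s\cdot B$; each $T_k$ has parallel bases of lengths $2r_0$ and $2s$, height $s-r_0$, and area $s^2-r_0^2$. To partition $A_{\mathrm{tgt}}$ analogously, for each corner $v_k$ of $B$ I set $c_k=b(r_0 v_k)$ and $C_k=sf(v_k)$, and connect $c_k$ to $C_k$ by a piecewise linear arc $\gamma_k\subset A_{\mathrm{tgt}}$. Such arcs exist because the hypothesis $b(r_1\cdot B)\subset s\cdot P$ gives an isometric annular buffer of width $r_1-r_0$ around $b(r_0\cdot B)$. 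After small perturbations of the $\gamma_k$ inside this buffer, each of the four sub-regions $R_1,\dots,R_4$ cut off by the arcs can be arranged to have area exactly $s^2-r_0^2$.

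For each matched pair $(T_k,R_k)$ I construct an area preserving piecewise affine biLipschitz map $F_k:T_k\to R_k$ respecting the prescribed boundary data: $b$ on the inner base (a side of $r_0\cdot B$), $sf$ on the outer base (a side of $s\cdot B$, whose image is a piece of $s\cdot\partial P$), and the arcs $\gamma_k,\gamma_{k\pm 1}$ on the two slanted sides. Concretely, Lemma~\ref{trapezoid-lemma} rectifies $T_k$ to a rectangle; an affine transformation together with skew transformations from Lemma~\ref{skew-lemma} straighten $R_k$ so that its inner base lies on the $x$-axis and its two slanted sides become vertical, presenting $R_k$ as a strip between two piecewise linear graphs; Lemma~\ref{square-to-graph2} then yields the map with Jacobian $1$ and the correct boundary data. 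The four $F_k$ glue along the shared slanted sides and together with $F|_{r_0\cdot B}=b$ into a single piecewise affine area preserving $F:s\cdot B\to s\cdot P$.

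The main obstacle is to ensure that the biLipschitz constant $L'$ of $F$ depends only on $P$, $f$, $r_0$, $r_1$, $C_0$ and not on $s\in[r_1,C_0]$ or on the isometric embedding $b$. Uniformity will rely on: the buffer of width at least $r_1-r_0$ combined with the bound $s\le C_0$ controlling the geometry (aspect ratios and skew constants needed to straighten) of the $R_k$; the isometry $b$ providing ideal Lipschitz data on the inner bases; $sf$ being $sL$-biLipschitz with $s\in[r_1,C_0]$, so the outer bases have bounded slope and length range depending only on $f$ and $C_0$; and the area adjustment of the $\gamma_k$ contributing only a bounded biLipschitz factor by Lemma~\ref{skew-lemma}. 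Bookkeeping these contributions gives a single $L'=L'(P,f,r_0,r_1,C_0)$ for which the constructed $F$ is $L'$-biLipschitz.
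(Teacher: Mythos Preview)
Your approach differs substantially from the paper's. The paper uses compactness rather than an explicit construction: the space $\mathcal B$ of pairs $(b,s)$ with $b(r_1\cdot B)\subset s\cdot P$ and $s\in[r_1,C_0]$ is compact; for each fixed $(b,s)$ one simply \emph{fixes} some area-preserving piecewise affine $F_{b,s}$ meeting the boundary conditions on a slightly enlarged inner square $\tfrac{2r_0+r_1}{3}\cdot B$ (existence is taken for granted, not constructed); Lemma~\ref{square-rotation} together with a rescaling then lets this single $F_{b,s}$ serve every nearby $(b',s')$ with biLipschitz penalty at most $4$; a finite subcover yields the uniform $L'=4\max_i\mathrm{bilip}(F_{b_i,s_i})$.

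Your construction has a genuine gap at the straightening step, prior to any uniformity bookkeeping. A skew from Lemma~\ref{skew-lemma} can turn one curve into a vertical line, but not the two arcs $\gamma_{k-1}$ and $\gamma_k$ simultaneously; composing with an affine map does not help unless the two arcs are already parallel straight segments. Second, for a general polygon $P$ the outer boundary arc of $R_k$ (the image under $s\cdot f$ of one side of $B$) need not be a graph over any axis, so the hypothesis of Lemma~\ref{square-to-graph2} can fail regardless of how the sides are handled. Third, Lemma~\ref{square-to-graph2} imposes a specific affine parametrization on the left and right sides of its source rectangle; since the straightening transformations for $R_k$ and $R_{k+1}$ are different, the two parametrizations they induce on the shared arc $\gamma_k$ have no reason to agree, so your four maps $F_k$ need not glue to a continuous $F$. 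An explicit construction along your lines may well be possible, but Lemmas~\ref{skew-lemma}, \ref{trapezoid-lemma} and \ref{square-to-graph2} as stated are not enough to carry it out; the paper's compactness argument sidesteps all of these issues at the cost of being non-constructive.
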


\begin{proof}

Let $\mathcal B$ be a set of all pairs $(b,s),$ where $b:\mathbb R^2\to\mathbb R^2$ is an isometry, $b(r_1\cdot B)\subset s\cdot P$ and $s\in[r_1;C_0].$ We equip this set with a metric
$$
\mathrm{dist}\left((b,s),(b',s')\right) = \mathrm{dist}\left(b\big|_{r_1\cdot B},b'\big|_{r_1\cdot B}\right) + \left|s-s'\right|.
$$

It is clear that $\mathcal B$ is compact.

For any pair $(b,s)\in\mathcal B$ we fix an area preserving piecewise affine mapping $F_{b,s}:s\cdot B\to s\cdot P$ such that
$$F_{b,s}\big|_{\frac{2r_0+r_1}3\cdot B} = b\big|_{\frac{2r_0+r_1}3\cdot B}
\quad\text{and}\quad
F_{b,s}(s\cdot z) = s\cdot f(z)
\text{ for any }
z\in\partial B.$$

By Lemma~\ref{square-rotation} for any isometry $b:\mathbb R^2\to\mathbb R^2$ there exists such positive $\varepsilon = \varepsilon(b)$ that if $\mathrm{dist}(b\big|_{r_1\cdot B},b'\big|_{r_1\cdot B})<\varepsilon$ then there exists an area preserving piecewise affine 4-biLipschitz mapping $\Phi_{b,b'}:\mathbb R^2\to\mathbb R^2$ such that
$$
\Phi_{b,b'}\circ b\big|_{\frac{2r_0+r_1}3\cdot B} = b'\big|_{\frac{2r_0+r_1}3\cdot B},\quad
\Phi_{b,b'}\big|_{\mathbb R^2\setminus b\left(\frac{r_0+2r_1}3\cdot B\right)} = \mathrm{id}_{\mathbb R^2\setminus b\left(\frac{r_0+2r_1}3\cdot B\right)}
$$
(if $b$ and $b'$ differ by a parallel translation, we should apply Lemma~\ref{square-rotation} twice using two rotations by opposite angles around distinct points). For any isometry $b:\mathbb R^2\to\mathbb R^2$ and $\lambda > 0$ we denote by $b_{\lambda}$ an isometry $\alpha_{\lambda}\circ b\circ\alpha_{\lambda^{-1}},$ where $\alpha_{\lambda}(z) = \lambda\cdot z$ for any $z\in\mathbb R^2.$ If $(b,s)\in\mathcal B$ then we set
$$
\mathcal U_{b,s} = \left\{(b',s')\in\mathcal B:\ \mathrm{dist}\left(b\big|_{r_1\cdot B}, b'_{s/s'}\big|_{r_1\cdot B} \right)<\varepsilon(b)\text{\quad and \quad} s' > s\cdot\max\left(\frac{r_0+2r_1}{3r_1},\frac{3r_0}{2r_0+r_1} \right)  \right\}
$$
(actually among two real numbers in the maximum function the first one is always greater than the second). By definition $\mathcal U_{b,s}$ is open and $(b,s)\in\mathcal U_{b,s}.$ If $(b',s')\in\mathcal U_{b,s},$ then $\frac{s'}{s}\cdot r_1 > \frac{r_0+2r_1}3,$ therefore
$$b'_{s/s'}\left(\frac{r_0+2r_1}3\cdot B\right)\subset s\cdot P
\quad\text{and}\quad
\Phi_{b, b'_{s/s'}}\big|_{\mathbb R^2\setminus s\cdot P} = \mathrm{id}_{\mathbb R^2\setminus s\cdot P}.
$$

If $(b',s')\in \mathcal U_{b,s},$ then $\frac{s}{s'}\cdot r_0 < \frac{2r_0+r_1}3,$
$$
\alpha_{s'/s}\circ \Phi_{b,b'_{s/s'}}\circ F_{b,s}\circ\alpha_{s/s'}\big|_{r_0\cdot B} = b'\big|_{r_0\cdot B}
\quad\text{and}\quad
\alpha_{s'/s}\circ \Phi_{b,b'_{s/s'}}\circ F_{b,s}\circ\alpha_{s/s'}(s'\cdot z) = s'\cdot f(z)
$$
for any $z\in\partial B.$

Since $\mathcal B$ is compact, there exist a finite number of neighborhoods $\mathcal U_{b_i,s_i}$ which cover $\mathcal B.$ Hence for any $(b',s')\in\mathcal B$ there exists such $i$ that a mapping
$$
F' = \alpha_{s'/s_i}\circ \Phi_{b_i,b'_{s_i/s'}}\circ F_{b_i,s_i}\circ\alpha_{s_i/s'}
$$
is piecewise affine, preserves area, $L'$-biLipschitz, where $L' = 4\max\limits_i\mathrm{bilip}(F_{b_i,s_i}),$ and
\begin{itemize}
\item $F'\big|_{r_0\cdot B} = b'\big|_{r_0\cdot B};$
\item $F'(s'\cdot z) = s'\cdot f(z)$ for any $z\in\partial B.$
\end{itemize}
\end{proof}

We fix $M(r_0,r_1)$-biLipschitz area preserving piecewise affine mapping $\Psi_0:\mathbb R^2\to \mathbb R^2$ the restriction of which on the complement to the square $\frac{r_0+2r_1}3\cdot B$ is the identity, and the restriction on the square $\frac{2r_0+r_1}3\cdot B$ is a rotation by $90^{\circ}.$

Let $\lambda\in [1/M_1;M_1],$ where $M_1 = \min\left(\frac{2r_0+r_1}{3r_0}, \frac{3r_1}{r_0+2r_1} \right) = \frac{3r_1}{r_0+2r_1}.$ Let $A_{\lambda}:(x,y)\mapsto (\lambda\cdot x, y/\lambda).$ Then a mapping $A_{\lambda}\circ\Psi_0\circ A_{\lambda}$ is the identity on the square $r_0\cdot B,$ and coincides with $A_{\lambda^2}$ outside the square $r_1\cdot B.$ Its biLipschitz constant is bounded above by a function of $r_0$ and $r_1.$

\begin{lemm}\label{square-to-rectangle}
Let $C_1\ge 1$. Let $s$ and $k$ be such real numbers that $s/k,$ $s\cdot k\in[r_1;1]$ and $k\in[1/C_1;C_1].$ There exists $M(C_1,r_0,r_1)$-biLipschitz area preserving piecewise affine mapping $\Psi: s\cdot B\to [-s/k;s/k]\times[-s\cdot k;s\cdot k]$ such that
\begin{itemize}
\item $\Psi\big|_{r_0\cdot B}$ is the identity;
\item $\Psi(x,y) = (x/k, k\cdot y)$ if $(x,y)\in s\cdot \partial B.$
\end{itemize}
\end{lemm}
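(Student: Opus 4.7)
The plan is to iterate the base construction $A_\lambda\circ\Psi_0\circ A_\lambda$ introduced in the paragraph immediately preceding the lemma, which already handles the restricted range $\lambda=1/\sqrt{k}\in[1/M_1,M_1]$, i.e.\ $k\in[1/M_1^2,M_1^2]$. For a general $k\in[1/C_1,C_1]$, I would decompose $k$ into a product of factors each lying in $[1/M_1^2,M_1^2]$ and apply the base map once per factor. Since $M_1=3r_1/(r_0+2r_1)>1$ (because $r_1>r_0$), the required number of factors is bounded by some $n=n(C_1,r_0,r_1)\le\lceil\log_{M_1^2}C_1\rceil+1$.

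Concretely, I would first reduce to $k\ge 1$ by swapping the $x$- and $y$-coordinates (which preserves $r_0\cdot B$ and turns the problem for $k$ into the one for $1/k$). Next, pick $1=t_0<t_1<\cdots<t_n=k$ with $t_i/t_{i-1}\in[1,M_1^2]$, set $\lambda_i=\sqrt{t_{i-1}/t_i}\in[1/M_1,1]$, and let $\phi_i=A_{\lambda_i}\circ\Psi_0\circ A_{\lambda_i}$. By the discussion preceding the lemma, each $\phi_i:\mathbb R^2\to\mathbb R^2$ is piecewise affine, area preserving, $M(r_0,r_1)$-biLipschitz, the identity on $r_0\cdot B$, and coincides with $A_{t_{i-1}/t_i}$ on $\mathbb R^2\setminus r_1\cdot B$. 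The sought mapping is $\Psi=(\phi_n\circ\phi_{n-1}\circ\cdots\circ\phi_1)\big|_{s\cdot B}$.

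I would verify the required properties by induction on $i$. The partial composition $\Psi^{(i)}=\phi_i\circ\cdots\circ\phi_1$ maps $s\cdot B$ bijectively onto the rectangle $R_i=[-s/t_i,s/t_i]\times[-st_i,st_i]$, is the identity on $r_0\cdot B$, and acts as $A_{1/t_i}$ on $s\cdot\partial B$; the first of these follows from a standard Jordan-curve argument (each $\phi_j$ is a homeomorphism fixing $r_0\cdot B\subset R_j$), and the chained boundary identity is the simple calculation $A_{t_i/t_{i+1}}\circ A_{1/t_i}=A_{1/t_{i+1}}$. The inductive step needs $\partial R_i$ to lie outside $r_1\cdot B$, so that $\phi_{i+1}$ acts on $\partial R_i$ as the linear map $A_{t_i/t_{i+1}}$ and sends $\partial R_i$ to $\partial R_{i+1}$. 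This is precisely where the hypotheses $s/k,sk\in[r_1,1]$ enter: for every intermediate $t_i\in[1,k]$ one has $s/t_i,st_i\in[s/k,sk]\subset[r_1,1]$, whence $R_i\supset r_1\cdot B$.

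Taking $i=n$ yields the required $\Psi:s\cdot B\to[-s/k,s/k]\times[-sk,sk]$, identity on $r_0\cdot B$, equal to $(x,y)\mapsto(x/k,ky)$ on $s\cdot\partial B$, piecewise affine and area preserving as a composition of such maps, with biLipschitz constant at most $M(r_0,r_1)^n=M(C_1,r_0,r_1)$. The only point requiring genuine care is the containment $R_i\supset r_1\cdot B$ at every intermediate step; without the full two-sided hypothesis $s/k,sk\in[r_1,1]$ some intermediate rectangle would fail to contain $r_1\cdot B$, the next $\phi_{i+1}$ would not act linearly on the relevant boundary, and the chained identity would break. Aside from this bookkeeping, the argument is a mechanical iteration of the already-constructed base case.
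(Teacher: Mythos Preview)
Your argument is correct and takes a genuinely different route from the paper's. The paper does \emph{not} iterate the base map. Instead it first asserts, for every admissible $(s,k)$, the existence of \emph{some} piecewise affine area-preserving map $\Psi_{s,k}$ with the correct boundary behaviour and identity on $r_0\cdot B$ (with no control on the biLipschitz constant), observes that one may take $s$ minimal for each fixed $k$, and then uses compactness of $[1/C_1,C_1]$ to pick finitely many $k_1,\dots,k_n$ so that every $k$ lies in some $[k_i/M_1^2,\,k_iM_1^2]$. The desired map is then the single composition $A_\lambda\circ\Psi_0\circ A_\lambda\circ\Psi_{s(k_i),k_i}$ with $k=\lambda^2 k_i$, and the uniform biLipschitz bound comes from the finiteness of the family $\{\Psi_{s(k_i),k_i}\}$. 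Your approach avoids both the unquantified existence step and the compactness argument: by factoring $k$ as a product of at most $\lceil\log_{M_1^2}C_1\rceil$ numbers in $[1,M_1^2]$ and composing the corresponding base maps, you get an entirely explicit construction with the concrete bound $M(r_0,r_1)^{\lceil\log_{M_1^2}C_1\rceil}$. The point you single out---that each intermediate rectangle $R_i$ contains $r_1\cdot B$, so that $\phi_{i+1}$ acts linearly on $\partial R_i$---is exactly the right thing to check, and your verification via $s/t_i\ge s/k\ge r_1$ and $st_i\ge s\ge r_1$ is complete. Your reduction to $k\ge1$ by conjugating with the coordinate swap is also clean. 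In short: the paper trades constructivity for brevity via compactness, while you give an elementary explicit iteration; both work.
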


\begin{proof}
For any $s$ and $k$ we choose a mapping $\Psi_{s,k}$ which satisfies all conditions in the statement of the lemma except the estimate on the biLipschitz constant. Since $\Psi_{s,k}$ can be extended by an affine mapping on the complement to the square $s\cdot B,$ any mapping $\Psi_{s_0, k}$ satisfies all conditions for $s>s_0.$ Hence we may assume that $\Psi_{s, k} = \Psi_{s(k), k}$ for all admissible $s$ and $k$, where $s(k)$ is the minimal admissible $s$ for a fixed $k$. By compactness there exist $k_1,\dots,k_n\in[1/C_1;C_1]$ such that for any $k\in[1/C_1;C_1]$ there exist such $\lambda\in [1/M_1;M_1]$ and $i\in\{1,2,\dots,n\}$ that $k = \lambda^2\cdot k_i.$ Therefore the following mapping is sought-for.
$$
A_{\lambda}\circ\Psi_0\circ A_{\lambda}\circ \Psi_{s(k_i), k_i}.
$$

Its biLipschitz constant is bounded above by a function of $r_0, r_1$ and $C_1.$
\end{proof}

Recall that $\left[\widetilde\Phi\circ\widetilde q_0\big|_{\widetilde\lambda_0\cdot\partial B}\right] \in\mathcal P$, where $\mathcal P$ is finite and depends only on $r_0,r_1$ and $L$. Since $\widetilde\lambda_0<1$ and $\widetilde\Phi\left(\widetilde Q_0\right)\supset r_1'\cdot B,$ then by Lemma~\ref{square-travel} there exists an area preserving piecewise affine $L'(r_0,r_1,L)$-biLipschitz mapping $F:\widetilde\lambda_0\cdot B\to\widetilde\Phi\left(\widetilde Q_0\right)$ such that
\begin{itemize}
\item $F\big|_{r_0\cdot B} = \mathrm{id}_{r_0\cdot B}$;
\item $F\big|_{\widetilde\lambda_0\cdot\partial B} = \widetilde\Phi\circ\widetilde q_0\big|_{\widetilde\lambda_0\cdot\partial B}.$
\end{itemize}

By Lemma~\ref{square-to-rectangle} there exists a mapping $G:\widetilde\lambda_0\cdot B \to \widetilde Q_0$ such that
\begin{itemize}
\item $G\big|_{r_0\cdot B}$ is a parallel translation by a vector $\overrightarrow{O\widetilde O},$ where $O$ is a coordinate origin;
\item $G (x, y) = (x/k, k\cdot y) + \widetilde O$ for any point $(x,y)\in\widetilde\lambda_0\cdot\partial B,$ where $k$ is a positive constant;
\item $G$ is piecewise affine, preserves area, and its biLipschitz constant is bounded above by a function of $r_0, r_1$ and $L$ (since the lengths of sides of the rectangle $\widetilde Q_0$ are bounded from below by a positive function of $L$).
\end{itemize}

By construction the mapping $G^{-1}\circ\widetilde q_0\big|_{\widetilde\lambda_0\cdot\partial B}$ is an isometry. Let $b:\mathbb R^2\to\mathbb R^2$ be an extension of this isometry. Then
$$
G\circ b\big|_{\lambda_0\cdot \partial B} = \widetilde q_0\big|_{\widetilde\lambda_0\cdot B}.
$$

Since $B\supset r_1'\cdot B  + \widetilde O$, then by Lemma~\ref{square-travel} there exists an area preserving piecewise affine mapping $H:B\to B$ whose biLipschitz constant is bounded from above by a function of $r_0$ and $r_1$ such that
\begin{itemize}
\item $H(z) = b(z) + \widetilde O$ for any $z\in r_0\cdot B;$
\item $H\big|_{\partial B} = \mathrm{id}\big|_{\partial B}.$
\end{itemize}

Finally we define a mapping $\Phi_{\square}:B\to \widetilde \Phi(B)$ by the rule
$$
\Phi_{\square}(z) = 
\left\{
\begin{aligned}
F \circ b^{-1} \circ G^{-1} \circ H(z),& \text{ if } z\in H^{-1}\left(\widetilde Q_0\right),\\
\widetilde\Phi(z),& \text{ if } z\notin H^{-1}\left(\widetilde Q_0\right).
\end{aligned}
\right.
$$

Then $\Phi_{\square}\big|_{r_0\cdot B} = \mathrm{id}_{r_0\cdot B}$ and $\Phi_{\square}\big|_{\partial B} = \phi.$ Also the mapping $\Phi_{\square}$ is locally piecewise affine on the interior of the square, preserves are and its biLipschitz constant is bounded above by a function of $r_0, r_1$ and $L.$ So $\Phi_{\square}$ is sought-for.

\subsection{Extending to the complement}
\label{extension-to-complement}
In this section we prove Proposition~\ref{extension-to-complement-lemma} and we finish the proof of Theorems~\ref{main-theorem},~\ref{annulus-version} and~\ref{the-technical-lemma-without-parameters}.

For any $R$ and $r$ such that $R>r$ we denote by $\Omega_{R,r}$ the annulus bounded by squares $\alpha_R(B)$ and $\alpha_r(B).$

\begin{lemm}
\label{infinity-extension-lemma}
For any $L,$ $r_1, R_0,R_1$ such that $r_1<1<R_1<R_0$ there exists $\widetilde K$ such that any $L$-biLipschitz mappings $\phi:\partial B\to\mathbb R^2$ such that $\phi(\partial B)\subset\Omega_{R_1,r_1}$ and $\phi(\partial B)$ bounds a region of area 4 can be extended to a mapping $\widetilde\Phi$ of the complement to the interior of $B$ and this mapping
\begin{itemize}
\item is locally piecewise affine in the complement to $B$;
\item preserves area;
\item is the identity on $\mathbb R^2\setminus \left(R_0\cdot B\right)$;
\item is $\widetilde K$-biLipschitz.
\end{itemize}
\end{lemm}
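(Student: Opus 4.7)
The plan is to mirror the proof of Proposition~\ref{extension-to-annulus-lemma} in Section~\ref{correction-on-some-square}, now working on the exterior annulus $R_0\cdot B\setminus\mathrm{int}(B)$ in place of the square $B,$ with $\partial(R_0\cdot B)$ playing the role that $\partial(r_0\cdot B)$ played in that proof. The three ingredients will be: (i) an exterior version of Tukia's extension producing a $K(L)$-biLipschitz model on the outside of $B$ with a self-similar tiling whose images lie in finitely many similarity classes; (ii) an interpolation on a controlled intermediate annulus matching the extension to the identity on $\partial(R_0\cdot B);$ and (iii) the tile-by-tile area-correction of Lemma~\ref{areafication}.

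For (i), Tukia's theorem from~\cite{Tuk} extends any $L$-biLipschitz $\phi:\partial B\to\mathbb R^2$ to a biLipschitz self-homeomorphism of the \emph{whole} plane, and its self-similar construction on the exterior of $B$ produces a tiling by quadrilaterals $Q_j$ lying in annuli $(1+2^{-im})\cdot B\setminus(1+2^{-(i+1)m})\cdot B$ for $i\ge 0,$ each divided into $4\cdot 2^{im}$ pieces, whose images form only finitely many similarity classes. The statement and proof of Lemma~\ref{Tukia-theorem} transcribe verbatim to the exterior; alternatively one conjugates by a piecewise affine ``square inversion'' $\sigma$ sending $r\cdot B$ to $r^{-1}\cdot B$ to reduce to the interior version. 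The integer $m,$ the constant $K,$ and the finite set of model maps $p_k$ depend only on $L.$

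For (ii), choose $N$ depending only on $L, R_1, R_0$ so that the self-similar part fits inside a square $\rho\cdot B$ with $R_1<\rho<R_0$ uniformly separated from $\partial(R_0\cdot B).$ After merging the outermost layer using Lemma~\ref{merging-elements-of-tiling-lemma}, the region $R_0\cdot B\setminus(\rho\cdot B)$ becomes one controlled annular piece bounded by two piecewise linear curves of comparable size and separation. On it I would construct, following the scheme of Lemma~\ref{square-travel} (and using Lemma~\ref{trapezoid-lemma}, Lemma~\ref{square-to-graph2}, etc.\ on an explicit decomposition into trapezoidal pieces), an area-preserving piecewise affine biLipschitz map matching the Tukia extension on $\partial(\rho\cdot B)$ and the identity on $\partial(R_0\cdot B),$ with biLipschitz constant depending only on $L, R_1, R_0.$ This introduces only finitely many additional similarity classes of boundary matchings.

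For (iii), apply Lemma~\ref{areafication} to the self-similar part: each $Q_j$ is replaced by a polygon $\widetilde Q_j$ with $\mathrm{Area}(\widetilde Q_j)=\mathrm{Area}(\Phi(Q_j))$ and the map is rebuilt tile by tile from the finite family of models $p_k.$ The area balance $\sum_j\mathrm{Area}(\widetilde Q_j)=4R_0^2-4=\mathrm{Area}(R_0\cdot B\setminus B)$ holds because $\phi(\partial B)$ encloses area $4.$ Outside $R_0\cdot B$ we simply set the map to be the identity; continuous matching on $\partial(R_0\cdot B)$ is guaranteed by (ii). The main obstacle is (i), because Lemma~\ref{Tukia-theorem} is stated only for the interior of $B$; a careful rereading of~\cite{Tuk} or the explicit conjugation by $\sigma$ gives what we need, but with constants depending only on $L.$ The subsidiary difficulty is controlling the biLipschitz constant of the interpolation in (ii), which is exactly the reason the hypotheses $r_1<1<R_1<R_0$ enter: they pin down a lower bound on the geometric size of the intermediate annulus that is independent of $\phi.$
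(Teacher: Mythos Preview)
Your outline is plausible but takes a genuinely different route from the paper, and it is considerably more laborious. The paper does \emph{not} redo Tukia, Lemma~\ref{half-theorem}, or Lemma~\ref{areafication} on the exterior. Instead it reduces the exterior problem to the already-proven interior result, Proposition~\ref{extension-to-annulus-lemma}, by a single trick: fix an area-preserving piecewise affine self-map $\delta$ of the annulus $\Omega_{R_1,r_1}$ that exchanges the two boundary components. Then $\delta\circ\phi(\partial B)$ bounds a region of area $S$ determined by $4+S=4(r_1^2+R_1^2)$, and after rescaling by $2/\sqrt{S}$ one is exactly in the situation of Proposition~\ref{extension-to-annulus-lemma}. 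That proposition hands back an area-preserving extension $\Psi$ of the square which is the identity on a controlled inner square $r_0\cdot B$. A second area-preserving annulus swap $\widetilde\delta:\Omega_{R_0',1}\to\Omega_{\sqrt{S}/2,r_0}$ (with $R_0'$ chosen so that the two annuli have equal area, and one checks $R_0'\le R_0$) lets one define $\widetilde\Phi=\delta^{-1}\circ\Psi_\alpha\circ\widetilde\delta$ on $\Omega_{R_0',1}$; this is the identity on $R_0'\cdot\partial B$ and extends by the identity outside. All constants are controlled because $\delta,\widetilde\delta$ depend only on $r_1,R_1,R_0$.

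Compared to this, your plan has real work to do in every step. Your ``square inversion'' $\sigma$ is not area-preserving, so it only helps for the raw Tukia extension in (i), not for the area correction; you therefore still need exterior analogues of Lemma~\ref{half-theorem} and Lemma~\ref{areafication}, which are stated and proved only for the interior tiling of $B$. Step~(ii) is also underspecified: Lemma~\ref{square-travel} handles a square with a small inner square, not an annulus with a piecewise linear inner boundary carrying prescribed (Tukia) data and an outer square boundary carrying the identity; building that interpolation with controlled constants is feasible with the paper's toolbox but is an additional construction, not a citation. The paper's reduction via the area-preserving annulus swap $\delta$ sidesteps all of this and reuses Proposition~\ref{extension-to-annulus-lemma} as a black box.
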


\begin{rema}
At first glance the condition that the curve bounds a region of area 4 is unnecessary because $\widetilde\Phi$ is a mapping between unbounded components. Actually without this condition we can not make $\widetilde \Phi$ to be the identity outside some square.
\end{rema}

\begin{proof}
Let us fix an arbitrary piecewise affine mapping $\delta: \Omega_{R_1,r_1} \to \Omega_{R_1,r_1}$ which preserves area and orientation and exchanges the boundary components. The mapping $\delta$ depends only on $r_1$ and $R_1$, hence the biLipschitz constant of a mapping $\delta\circ\phi$ is bounded above by a function of $L,$ $r_1$ and $R_1.$ Let the curve $\delta\circ\phi(\partial B)$ bound a region of total area $S$. Then
\begin{equation}
\label{Sdef}
4 + S = 4\left(r_1^2 + R_1^2\right).
\end{equation}

Let $r_0$ be such real number that
\begin{equation}
\label{r0def}
\mathrm{Area}(\Omega_{r_1,r_0}) = 4\min\left(r_1^2/2, R_0^2 - R_1^2\right).
\end{equation}

By Lemma~\ref{extension-to-annulus-lemma} for a mapping $\alpha_{2\sqrt{S}^{-1}}\circ\delta\circ\phi$ and real numbers $\frac{2r_0}{\sqrt{S}},$ $\frac{2r_1}{\sqrt{S}}$ there exists a mapping $\Psi: B\to\mathbb R^2$ such that
\begin{itemize}
\item $\Psi\big|_{\partial B} = \alpha_{2\sqrt{S}^{-1}}\circ\delta\circ\phi;$
\item $\Psi\big|_{\frac{2r_0}{\sqrt{S}}\cdot B} = \mathrm{id}_{\frac{2r_0}{\sqrt{S}}\cdot B};$
\item $\Psi$ is locally piecewise affine on the interior of the square, preserves area and its biLipschitz constant is bounded above by a function of $L,$ $r_1,$ $R_0,$ and $R_1.$
\end{itemize}

A mapping $\alpha_{\sqrt{S}/2}\circ\Psi\circ\alpha_{2\sqrt{S}^{-1}}$ we denote by $\Psi_{\alpha}.$ Then
$$\Psi_{\alpha}\big|_{\sqrt{S}/2\cdot\partial B} = \delta\circ\phi\circ\alpha_{2\sqrt{S}^{-1}}
\quad\text{and}\quad
\Psi_{\alpha}\big|_{r_0\cdot B} = \mathrm{id}_{r_0\cdot B}.
$$

Since $\sqrt{S}/2 > r_1 > r_0,$ there exists such real number $R_0'$ that
$$\mathrm{Area}(\Omega_{R_0', 1}) = \mathrm{Area}(\Omega_{\sqrt {S}/2, r_0}).$$

Then by (\ref{Sdef}) and (\ref{r0def})
\begin{equation}
\label{R0prime}
\left(R_0'\right)^2 = 1 + S/4 - r_0^2 = r_1^2 + R_1^2 - r_0^2 = \left(r_1^2 -r_0^2\right) + R_1^2 = \min\left(r_1^2/2, R_0^2 - R_1^2\right) + R_1^2 \le R_0^2.
\end{equation}

Let $\widetilde \delta: \Omega_{R_0', 1} \to \Omega_{\sqrt {S}/2, r_0}$ be such area preserving piecewise affine mapping that
$$
\widetilde \delta\big|_{R_0'\cdot\partial B} = \alpha_{r_0/R_0'}\big|_{R_0'\cdot\partial B}
\quad\text{and}\quad
\widetilde \delta\big|_{\partial B} = \alpha_{\sqrt S/2}\big|_{\partial B}.
$$


Since $\left(R_0'\right)^2 = \left(r_1^2 -r_0^2\right) + R_1^2$ (see~(\ref{R0prime})) 
$$
\mathrm{Area}(\Omega_{r_1,r_0}) = \mathrm{Area}(\Omega_{R_0',R_1}).
$$

Therefore we can extend the mapping $\delta$ to the annulus $\Omega_{r_1,r_0}$ such that it preserves area, is piecewise affine and its restriction to $r_0\cdot B$ coincides with a homothety $\alpha_{R_0'/r_0}.$

We define a mapping $\widetilde\Phi$ on the annulus $\Omega_{R_0',1}$ as a composition
$$
\Omega_{R_0',1}
\xrightarrow{\widetilde\delta}
\Omega_{\sqrt {S}/2, r_0}
\xrightarrow{\Psi_{\alpha}}
\Omega_{R_1, r_0}
\xrightarrow{\delta^{-1}}
\Omega_{R_0',r_1}.
$$

By construction $\widetilde \Phi\big|_{\partial B} = \phi.$ The mapping $\widetilde\Phi$ preserves area, since the mappings $\widetilde\delta,$ $\Psi_{\alpha}$ and $\delta$ preserve area. By construction the restriction $\widetilde\Phi\big|_{R_0'\cdot\partial B}:R_0'\cdot\partial B\to R_0'\cdot\partial B$ is a composition of a homotheties with center in the coordinate origin, hence it is the identity. Therefore we can extend the mapping $\widetilde \Phi$ by the identity to the complement to the square $R_0'\cdot B.$ Since $R_0'\le R_0$ by inequality~(\ref{R0prime}), $\widetilde\Phi$ is the identity outside the square $R_0\cdot B.$
\end{proof}

Let $O$ be the coordinate origin.

\begin{prop}
\label{extension-to-complement-lemma}
For any $L$ there exist $\widetilde K$ and $R_0$ such that any $L$-biLipschitz mapping $\phi:\partial B\to\mathbb R^2$ onto the boundary of a region of area 4 can be extended to a mapping of the complement to the interior of $B$ such that this mapping
\begin{itemize}
\item is locally piecewise affine on the complement to $B$;
\item preserves area;
\item $\widetilde K$-biLipschitz;
\item is a parallel translation by a vector $\overrightarrow{OW}$ outside the square $R_0\cdot B,$ where $W$ is a point lying in the bounded component of the complement to the curve $\phi(\partial B).$
\end{itemize}
\end{prop}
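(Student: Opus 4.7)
The plan is to reduce Proposition~\ref{extension-to-complement-lemma} to the already-proved Lemma~\ref{infinity-extension-lemma} by translating $\phi$ so that the translated curve sits in a controlled annulus around the origin. First I would invoke Tukia's extension theorem (cited in the introduction) to extend $\phi$ to an $M$-biLipschitz self-homeomorphism $\Psi:\mathbb R^2\to\mathbb R^2$, with $M=M(L)$, and set $W=\Psi(O)$, where $O$ is the coordinate origin. Since $\Psi$ is $M$-biLipschitz and $\mathrm{dist}(O,\partial B)=1$, the point $W$ lies in the bounded component of $\mathbb R^2\setminus\phi(\partial B)$ at distance at least $1/M$ from $\phi(\partial B)$; the same biLipschitz bound combined with $\mathrm{diam}(\partial B)=2\sqrt2$ gives $|\phi(z)-W|\le\sqrt2\,M$ for every $z\in\partial B$.

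Next I would fix the parameters feeding into Lemma~\ref{infinity-extension-lemma}. Take $r_1=1/(2M)$, so that $\sqrt2\,r_1<1/M$ and hence $r_1\cdot B+W$ lies strictly inside the bounded region enclosed by $\phi(\partial B)$; take $R_1=\sqrt2\,M+1$, so that the translated curve sits inside $R_1\cdot B$; and finally take $R_0=R_1+1$. Each of $r_1,R_1,R_0$ depends only on $L$, and $r_1<1<R_1<R_0$. The translated boundary map $\phi'(z):=\phi(z)-W$ is still $L$-biLipschitz, still encloses a region of area $4$, and by construction satisfies $\phi'(\partial B)\subset\Omega_{R_1,r_1}$.

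Now I would apply Lemma~\ref{infinity-extension-lemma} to $\phi'$ with these parameters, producing an extension $\widetilde\Phi':\mathbb R^2\setminus\mathrm{int}(B)\to\mathbb R^2$ which is locally piecewise affine off $B$, area preserving, $\widetilde K$-biLipschitz for some $\widetilde K=\widetilde K(L)$, agrees with $\phi'$ on $\partial B$, and is the identity outside $R_0\cdot B$. Setting $\widetilde\Phi(z):=\widetilde\Phi'(z)+W$ yields the required map: translation by $W$ preserves area, Jacobians, piecewise-affine structure, and biLipschitz constants, so $\widetilde\Phi$ inherits all the listed regularity; on $\partial B$ it satisfies $\widetilde\Phi=\phi'+W=\phi$; and outside $R_0\cdot B$ the map reduces to the parallel translation $z\mapsto z+\overrightarrow{OW}$, whose target vector points to a point $W$ in the bounded component of $\mathbb R^2\setminus\phi(\partial B)$, as required.

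The only genuine obstacle is securing a base point with clearance from $\phi(\partial B)$ quantified purely in terms of $L$: area $4$ together with an $L$-biLipschitz boundary does not by itself forbid hair-shaped regions with arbitrarily small inscribed squares, so Tukia's theorem is essential to locate $W$. Once $W$ is chosen, everything else is routine and amounts to translation invariance of all properties in the conclusion of Lemma~\ref{infinity-extension-lemma}.
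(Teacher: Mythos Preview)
Your proposal is correct and follows essentially the same approach as the paper: use Tukia's extension to locate a point $W$ in the bounded component with clearance from $\phi(\partial B)$ controlled by $L$, translate so that $W$ goes to the origin, and then apply Lemma~\ref{infinity-extension-lemma} with parameters depending only on $L$. The only cosmetic differences are that the paper uses the square extension from Lemma~\ref{Tukia-theorem} rather than the whole-plane version and chooses slightly different numerical values $r_1=1/(\sqrt2\,K_0)$, $R_1=\sqrt2\,K_0$.
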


\begin{proof}
By Lemma~\ref{Tukia-theorem} $\phi$ can be extended to a $K_0$-biLipschitz mapping of the square. Let $W$ be the image of the origin under this mapping. Then
\begin{itemize}
\item
$1/K_0 \le \mathrm{dist}(z, W) \le \sqrt2\cdot K_0$ for any $z\in\phi(\partial B);$
\item
$W$ lies inside the bounded component of the complement to the curve $\phi(\partial B).$
\end{itemize}

Consider a parallel translation which maps $W$ to the origin. In the following let us  denote the composition of $\phi$ with this translation also by $\phi.$ Then the curve $\phi(\partial B)$ lies inside the annulus $\Omega_{\sqrt{2}\cdot K_0, 1/\left(\sqrt{2}\cdot K_0\right)}.$

We apply Lemma~\ref{infinity-extension-lemma} to the mapping $\phi$ and real numbers $r_1 = 1/\left(\sqrt{2}\cdot K_0\right),$ $R_1 = \sqrt{2}\cdot K_0$ and $R_0 = R_1 + 1.$ Since $K_0$ depends only on $L$, we are done.

\end{proof}

\begin{proof}[Proof of Theorem~\ref{main-theorem}]
Immediately follows from Propositions~\ref{square-extension-lemma} and~\ref{extension-to-complement-lemma}.
\end{proof}

\begin{proof}[Proof of Theorem~\ref{annulus-version}]
First we apply Propositions~\ref{extension-to-annulus-lemma} and~\ref{extension-to-complement-lemma}. We obtain a mapping $\widetilde \Phi$ which satisfies all conditions in the statement of the theorem except that it is not the identity outside a square $R\cdot B,$ where $R = R(L)$ is some real number. Outside a square $R_0\cdot B$, where $R_0 = R_0(L)$, it is a parallel translation by a vector $\overrightarrow{OW},$ where $W$ is some point inside a region bounded by the curve $\phi(\partial B).$

Since the points $O$ and $W$ lie inside the region bounded by $\phi(\partial B),$ then
$$
\mathrm{dist}(O,W) \le \mathrm{diam}(\phi(\partial B)).
$$

Since $\phi$ is $L$-biLipschitz,
$$
\mathrm{diam}(\phi(\partial B))\le 2\sqrt2\cdot L.
$$

Let $R = 2\sqrt2\cdot L + 2R_0.$

Now we apply Lemma~\ref{square-travel} for $C_0 = 1,$ $P = B,$ $f = \mathrm{id},$ $r_0 = R_0/R,$ $r_1 = 2R_0/R,$ $b$ which is a parallel translation by a vector $\overrightarrow{OW}/R.$ Let $F$ be the obtained mapping. Let $s$ be a parallel translation by a vector $\overrightarrow{OW}.$ Then the mapping
$$
s \circ\left(\alpha_R\circ F^{-1}\circ\alpha_{R^{-1}} \right)\circ\widetilde\Phi
$$
is sought-for.
\end{proof}

\begin{proof}[Proof of Theorem~\ref{the-technical-lemma-without-parameters}]
Immediately follows from Proposition~\ref{extension-to-annulus-lemma} and Lemma~\ref{infinity-extension-lemma}.
\end{proof}

\section{Main lemma}

In the previous section we called a pair of integers $(N, k)$ {\it admissible} if $N\ge 0$ and $k\in\left\lbrace1, 2, \dots, 2^N\right\rbrace.$

\begin{lemm}
\label{main-lemma}
Let $C>0$ and for any admissible pair $N,$ $k$ a real number $A_{N, k}$ is given such that $C^{-1}\cdot 2^{-2N} \le A_{N, k} \le C\cdot 2^{-2N}.$ Then there exist functions $f_{N, k}$ such that for any admissible pair $N,$ $k$
\begin{enumerate}[label = \arabic*), ref = \arabic*)]
\item
\label{mlp1}
the function $f_{N, k}$ is defined on a segment $\left[(k-1)\cdot2^{-N}; k\cdot2^{-N}\right]$ and is positive;
\item
\label{mlp2}
the function $f_{N, k}$ is piecewise linear, and its slope can only be zero or $\pm\varkappa$, where $\varkappa = 8C$;
\item
\label{mlp3}
$f_{N, k}\left((k-1)\cdot2^{-N}\right) = f_{N, k}\left(k\cdot2^{-N}\right) = 2^{-N};$
\item
\label{mlp4}
$f_{N, k} - f_{N+1, 2k-1} \ge d\cdot 2^{-N}$ and $f_{N, k} - f_{N+1, 2k} \ge d\cdot 2^{-N},$ where $d = 1/(8C)$, at each point where both functions are defined;
\item
\label{mlp5}
$$
\int f_{N, k}(x) dx - \int f_{N+1, 2k-1}(x) dx - \int f_{N+1, 2k}(x) dx = A_{N, k},
$$
where the integral is over the domain of a function.

\end{enumerate}
\end{lemm}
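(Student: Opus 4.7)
The plan is to construct the $f_{N,k}$ by an explicit recipe, handling conditions (1)--(3) and (5) by design and treating (4) as the main technical point.

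First, condition (5) is a telescoping recursion: summing over all descendants $(n,l)$ of $(N,k)$ in the binary refinement tree, the total integral $F_{N,k}:=\int f_{N,k}$ is forced to equal the convergent series $\sum A_{n,l}$. Using $C^{-1}\cdot 2^{-2n}\le A_{n,l}\le C\cdot 2^{-2n}$, a geometric series estimate gives
\[
C^{-1}\cdot 2^{-2N}\ \le\ F_{N,k}\ \le\ 2C\cdot 2^{-2N},
\]
together with the compatibility identity $F_{N,k}=A_{N,k}+F_{N+1,2k-1}+F_{N+1,2k}$ and the pointwise bound $A_{N,k}\ge 2^{-2N}/C$.

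Second, writing $L:=2^{-N}$, I would take $f_{N,k}$ on $[(k-1)L,kL]$ to be a symmetric piecewise linear profile with endpoint values $L$, a central plateau at height $L(1+\mu_{N,k})$ reached by $\pm\varkappa$-ramps of horizontal length $\ell_{N,k}=|\mu_{N,k}|L/\varkappa$, and with the plateau strictly positive. We choose a tent ($\mu_{N,k}>0$) when $F_{N,k}\ge L^2$ and a bathtub ($\mu_{N,k}<0$) otherwise; the equation $\mu_{N,k}(L-\ell_{N,k})=(F_{N,k}-L^2)/L$ has a unique admissible solution by the bounds above. Conditions (1), (2), (3), and (5) then hold by construction.

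The remaining and main obstacle is condition (4), the pointwise domination $f_{N,k}\ge f_{N+1,2k\pm 1}+dL$ with $d=1/(8C)$. On average the gap is $A_{N,k}/L\ge L/C$, far exceeding $dL$; the difficulty is uniform control. The dangerous configuration is a bathtub parent aligned with a tent child, where the parent dips in the middle while the child's plateau can be as tall as $(L/2)(1+4C)$. I would address this by enlarging the family from Step 2 as needed, admitting a double-plateau (``M''-shaped) profile whose two plateaus sit directly above the tall regions of the children with the required margin $dL$, while the central depression absorbs the residual integral constraint. The key input is the coupling forced by (5) together with $A_{N,k}\ge L^2/C$: in normalized form $\delta_{N,k}\ge C^{-1}+(\delta_{N+1,2k-1}+\delta_{N+1,2k})/4$ with $\delta_{N,k}:=F_{N,k}/L^2$, which bounds how tall the children can be relative to the parent and drives a case analysis over the tent/bathtub types of parent and each child. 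I expect this case analysis, together with the book-keeping of ramp positions so that every $\pm\varkappa$-segment of $f_{N,k}$ stays above every $\pm\varkappa$-segment of $f_{N+1,2k\pm 1}$, to be the bulk of the work.
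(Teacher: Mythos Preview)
Your approach is genuinely different from the paper's, and as written it has a real gap in the treatment of condition (4).

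You correctly reduce (5) to prescribing $F_{N,k}:=\int f_{N,k}$ as the descendant sum, and the symmetric tent/bathtub with plateau height $L(1+\mu_{N,k})$ does take care of (1)--(3) and (5). The problem is that your repair for the bad configuration (bathtub parent over a tall tent child) is to pass to an M-shaped parent whose bumps sit $dL$ above the children's tall regions. But once you allow M-shaped parents, the \emph{grandparent} must dominate an M-shaped child, which is not one of the cases in your ``case analysis over the tent/bathtub types of parent and each child.'' The locations and heights of the parent's bumps are inherited from the grandchildren's profiles, so the shape of $f_{N,k}$ ends up depending on descendants at all levels, not just on the two immediate children. Your integral coupling $\delta_{N,k}\ge C^{-1}+\tfrac14(\delta_{N+1,2k-1}+\delta_{N+1,2k})$ controls averages, but (4) is a pointwise inequality, and nothing in the sketch explains why the family of admissible shapes stays finite or why the book-keeping of ramp positions closes up across more than one generation. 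As stated, the induction does not close.

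The paper does not attempt a direct recipe. Instead it runs an iterative stabilization: at step $i$ each not-yet-stabilized $f_{N,k}^i$ is defined as the maximum of a base profile $g_{N,k,y_0}$ (with the reference height $y_0$ tuned so that $\int f_{N,k}^i=F_{N,k}$) and the shifted copies $f_{M,l}^{i-1}+2d\,(2^{-N}-2^{-M})$ of \emph{all} already-stabilized descendants at every depth $M>N$, not just the two children. At each step one function that already dominates all deeper ones by the required margin is frozen. The substance of the proof is then (a) that a reference value with the correct integral always exists in the range $(2d\cdot 2^{-N},(\varkappa/2+1)2^{-N}]$, and (b) that every function is eventually frozen. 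This builds (4) into the definition rather than verifying it after the fact, and it is exactly the mechanism that absorbs the multi-level cascade your outline leaves open. If you want to salvage a direct construction, you would at minimum need to define $f_{N,k}$ as a maximum over \emph{all} descendants simultaneously and then argue that the integral can still be adjusted without destroying the endpoint and slope constraints; that is essentially what the paper's iteration accomplishes.
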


\begin{figure}[h]
\center
\includegraphics{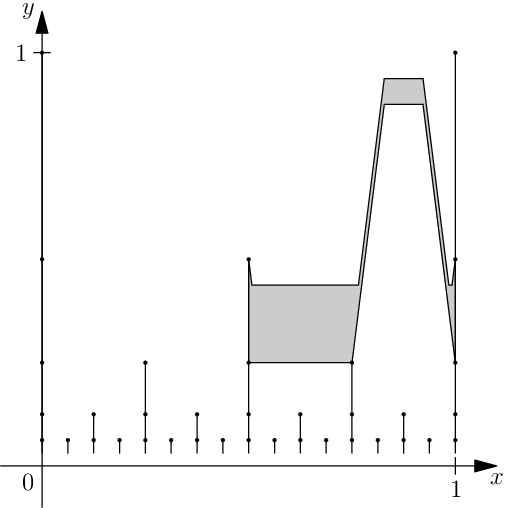}
\caption{A polygon bounded by graphs of three functions $f_{N,k}$ and two vertical segments.}
\label{skeleton-with-graphs}
\end{figure}

In Figure~\ref{skeleton-with-graphs} we illustrate conditions in the statement of Lemma~\ref{main-lemma}. We marked points of the form $\left(k\cdot2^{-N},2^{-N}\right)$ for any admissible pair $N,$ $k$, and of the form $\left(0, 2^{-N}\right)$ for all non-negative integer $N.$ Points lying on a common vertical line are connected by a segment. Condition~\ref{mlp3} means that the endpoints of graphs of functions $f_{N,k}$ are marked. Graphs of functions $f_{N, k}$, $f_{N+1, 2k-1},$ $f_{N+1, 2k}$ together with two vertical segments form a closed simple curve. Condition~\ref{mlp5} means that this curve bounds a region of area $A_{N,k}.$ Conditions~\ref{mlp2} and~\ref{mlp4} will allow us to bound the biLipschitz constant of a mapping from this region to a square.

From conditions~\ref{mlp1},~\ref{mlp2} and~\ref{mlp3} it follows that $f_{N,k}\le (1+\varkappa)\cdot2^{-N}$ on a domain of the function. Therefore
\begin{equation}
\label{area-to-zero}
\lim\limits_{N\to+\infty}\sum\limits_{1\le k\le 2^N}\int f_{N,k}(x) dx = 0.
\end{equation}

For each admissible pair $N,k$ we set $S_{N,k} = \sum A_{M, l},$ where we sum over all admissible pairs $M, l$ such that $(k-1)\cdot 2^{-N}<l\cdot 2^{-M}\le k\cdot 2^{-N}$ and $M\ge N.$ From condition~\ref{mlp5} and equality~(\ref{area-to-zero}) it follows that $S_{N,k}$ is the area below the graph of the function $f_{N,k}.$ Since $C^{-1}\cdot 2^{-2M} \le A_{M, l} \le C\cdot 2^{-2M},$

\begin{equation}
\label{S-inequality}
2C^{-1}\cdot 2^{-2N} \le S_{N, k} \le 2C\cdot 2^{-2N}.
\end{equation}

First we will construct auxiliary functions $f_{N,k}^{i}$ for each non-negative integer~$i$. Functions $f_{N,k}$ will be obtained as a limit

$$
f_{N,k} = \lim\limits_{i\to+\infty} f_{N,k}^{i}.
$$

We will construct functions $f_{N,k}^i$ by induction. For each admissible pair we will define a number $T(N, k)$ so that $T$ is a bijection between the set of all admissible pairs and the set of all non-negative integers and that $f_{N,k}^i = f_{N,k}^j$ if $i\ge j \ge T(N,k).$ We call number $T(N, k)$ {\it the moment of stabilization} of the corresponding function. If $i\ge T(N, k)$, we call a function $f_{N,k}^i$ {\it stabilized}.

For each admissible $(M, l)$ we denote a segment $\left[(l-1)\cdot2^{-M}; l\cdot 2^{-M}\right]$ by $I(M, l).$

For any $x_0\in\mathbb R$ we define a function $h_{x_0} (x) = \varkappa\cdot|x-x_0|.$

\begin{figure}[h]
\center
\includegraphics{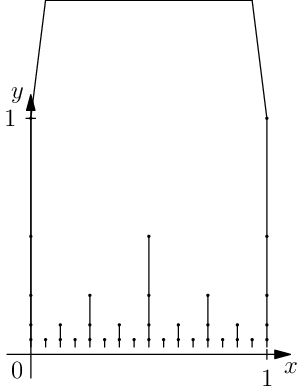}
\includegraphics{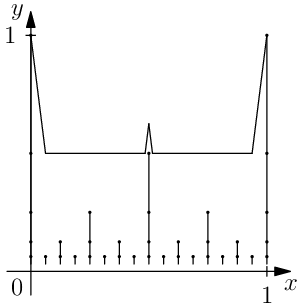}
\includegraphics{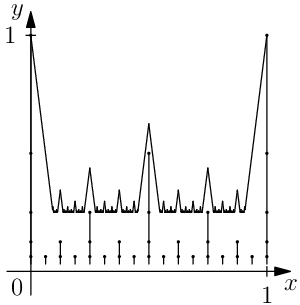}
\caption{Auxiliary function $g$ from Definition~\ref{minimal-function} for $y_0 = 3/2, 1/2$ and for $y_0\le 1/4$ (from the left to the right) and for parameters $N=0,$ $k=1,$ $\varkappa = 8,$ $d=1/8$.}
\label{aux-functions}
\end{figure}

\begin{defi}
\label{minimal-function}
Let $(N,k)$ be admissible, $y_0$ is non-negative integer. We define a function $g = g_{N, k, y_0}$ on the segment $I(N,k)$ (Figure~\ref{aux-functions}).

If $y_0 > 2^{-N}$ 
$$
g = \min\left(y_0, 2^{-N} + h_{(k-1)\cdot2^{-N}}, 2^{-N} + h_{k\cdot2^{-N}}\right).
$$

If $y_0 \le 2^{-N}$
$$
g = \max\left(y_0,  \max\limits_{l\cdot2^{-M}\in I(N,k)}\left(2^{-M} + 2d\cdot\left(2^{-N} - 2^{-M}\right) -h_{l\cdot2^{-M}}\right)\right).
$$
\end{defi}

\begin{clai}
\label{piecewise-linear-g}
If $y_0>2d\cdot2^{-N}$ the function $g$ is piecewise linear.
\end{clai}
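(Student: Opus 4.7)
The plan is to split on the two cases in Definition~\ref{minimal-function}. The case $y_0 > 2^{-N}$ is immediate: $g$ is the pointwise minimum of three functions on $I(N,k)$, namely a constant and two shifted absolute-value affine functions $2^{-N}+h_{(k-1)\cdot 2^{-N}}$ and $2^{-N}+h_{k\cdot 2^{-N}}$, each piecewise linear with slopes in $\{0,\pm\varkappa\}$, so their minimum is piecewise linear on the bounded interval $I(N,k)$.

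The substantial case is $2d\cdot 2^{-N} < y_0 \le 2^{-N}$, where $g$ is written as the pointwise maximum of the constant $y_0$ together with the family of tent functions
$$
\varphi_{M,l}(x) \;=\; 2^{-M}+2d\bigl(2^{-N}-2^{-M}\bigr)-h_{l\cdot 2^{-M}}(x),
$$
indexed by admissible $(M,l)$ with $l\cdot 2^{-M}\in I(N,k)$. Each individual $\varphi_{M,l}$ is already piecewise linear on $I(N,k)$, so it suffices to reduce this a priori infinite maximum to a finite one. The key observation is that $\varphi_{M,l}$ attains its global maximum on $\mathbb{R}$ at $x=l\cdot 2^{-M}$, with peak value $(1-2d)\cdot 2^{-M}+2d\cdot 2^{-N}$. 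Hence, whenever this peak value is at most $y_0$, the entire function $\varphi_{M,l}$ is dominated by $y_0$ on $I(N,k)$ and can be discarded from the maximum without changing $g$.

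To conclude I would use the hypothesis $y_0>2d\cdot 2^{-N}$ to bound how large $M$ has to be before $\varphi_{M,l}$ is everywhere dominated. If $d\ge 1/2$, then $(1-2d)\cdot 2^{-M}\le 0$ for every $M\ge 0$, so every $\varphi_{M,l}$ is already dominated and $g\equiv y_0$ is trivially piecewise linear. Otherwise $1-2d>0$, and the dominance condition $(1-2d)\cdot 2^{-M}+2d\cdot 2^{-N}\le y_0$ rearranges to
$$
2^{-M}\;\le\;\frac{y_0-2d\cdot 2^{-N}}{1-2d},
$$
whose right-hand side is strictly positive by hypothesis. Hence there exists a threshold $M_0=M_0(y_0,N,d)$ such that every $M\ge M_0$ satisfies the dominance condition, while for the finitely many $M<M_0$ (with $M\ge N$ enforced by $l\cdot 2^{-M}\in I(N,k)$ together with $l$ a positive integer and, where applicable, the few boundary values with $M<N$) there are only finitely many admissible $l$. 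The maximum defining $g$ therefore collapses to a finite maximum of piecewise linear functions, which is piecewise linear.

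I do not anticipate a serious obstacle: the content of the claim is really the quantitative observation that the peak heights $(1-2d)\cdot 2^{-M}+2d\cdot 2^{-N}$ decrease monotonically to $2d\cdot 2^{-N}$ as $M\to\infty$, so the strict inequality $y_0>2d\cdot 2^{-N}$ automatically cuts off all but finitely many indices. The only thing to be mildly careful about is the degenerate regime $d\ge 1/2$, where the same conclusion holds but by a different mechanism, as noted above.
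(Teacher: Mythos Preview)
Your proof is correct and follows essentially the same approach as the paper: the paper's one-line proof simply observes that only finitely many functions in the maximum exceed $y_0$, and your argument makes this explicit by computing the peak value $(1-2d)\cdot 2^{-M}+2d\cdot 2^{-N}$ of each tent function and noting it drops below $y_0$ once $M$ is large enough. Your extra case $d\ge 1/2$ is harmless but unnecessary here, since in the paper $d=1/(8C)$ with $C\ge 1$.
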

\begin{proof}
It follows from the fact that only a finite number of functions in the maximum function are greater than $y_0$ and that all these functions are piecewise linear.
\end{proof}

\begin{clai}
\label{g-integral}
If $y_0 = 2d\cdot2^{-N}$
$$
\int_{I(N,k)} g(x)dx < 2^{-2N}\cdot\left(2d + \frac3{2\varkappa}\right).
$$
\end{clai}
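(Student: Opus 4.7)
Since $g\ge y_0$ on the interval $I(N,k)$ of length $2^{-N}$, split
$$
\int_{I(N,k)} g(x)\,dx \;=\; 2d\cdot 2^{-2N} \;+\; \int_{I(N,k)}\bigl(g(x)-y_0\bigr)\,dx,
$$
so the task reduces to showing $\int_{I(N,k)}(g-y_0)\,dx<\frac{3}{2\varkappa}\cdot 2^{-2N}$. Writing each hat in the definition of $g$ as $T_{M,l}(x)+y_0$, where
$$
T_{M,l}(x)\;=\;2^{-M}(1-2d)-\varkappa\,\bigl|x-l\cdot 2^{-M}\bigr|,
$$
one has $g-y_0=\max_{(M,l)}T_{M,l}^+$, and each $T_{M,l}^+$ is a triangle of height $2^{-M}(1-2d)$ and half-base $(1-2d)\,2^{-M}/\varkappa$, hence of total area $(1-2d)^2\cdot 2^{-2M}/\varkappa$ (halved if the tip $l\cdot 2^{-M}$ sits at an endpoint of $I(N,k)$). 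The cornerstone estimate will be the trivial bound $\int\max T^+\le\sum\int T^+$; everything else is arranging the sum.

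To keep the sum manageable I first discard pairs $(M,l)$ whose hats are globally dominated in the family. If $M>N$ and $l$ is even, then $l\cdot 2^{-M}=(l/2)\cdot 2^{-(M-1)}$, so the pair $(M-1,l/2)$ has the same tip but a taller hat; indeed $T_{M,l}=T_{M-1,l/2}-2^{-M}(1-2d)$ pointwise. Iterating, the range of $M$ in the maximum may be taken $\ge N$ (only finer-scale hats contribute to this lower envelope on $f_{N,k}$; hats with $M<N$ would express upper bounds, not lower ones), and within this range only odd $l$ remain for $M>N$, together with the endpoint tips of $I(N,k)$ at level $M=N$ (at most two). For each $j\ge 1$ the odd integers $l$ strictly between $(k-1)2^j$ and $k\cdot 2^j$ form an arithmetic progression with common difference $2$ starting from the odd number $(k-1)2^j+1$, so there are exactly $2^{j-1}$ of them.

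Summing the triangle areas,
$$
\int_{I(N,k)}(g-y_0)\,dx\;\le\;2\cdot\frac{(1-2d)^2\cdot 2^{-2N}}{2\varkappa}\;+\;\sum_{j\ge 1}2^{j-1}\cdot\frac{(1-2d)^2\cdot 2^{-2(N+j)}}{\varkappa}\;=\;\frac{(1-2d)^2\cdot 2^{-2N}}{\varkappa}\!\left(1+\sum_{j\ge 1}2^{-j-1}\right)\;=\;\frac{3(1-2d)^2}{2\varkappa}\cdot 2^{-2N}.
$$
Since $d>0$ gives $(1-2d)^2<1$, this is strictly less than $\frac{3}{2\varkappa}\cdot 2^{-2N}$, completing the estimate. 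The main obstacle I foresee is the reduction step: pinning down why only pairs with $M\ge N$ and (for $M>N$) odd $l$ need to enter the maximum defining $g$; once that dictionary is fixed, the remaining argument is combinatorial bookkeeping plus a geometric series.
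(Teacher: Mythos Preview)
Your proof is correct and follows essentially the same approach as the paper: both split off the constant $y_0=2d\cdot 2^{-N}$, bound the maximum of the hat functions by their sum, compute each triangle's area as $(1-2d)^2\cdot 2^{-2M}/\varkappa$, count two half-triangles at level $M=N$ and $2^{M-N-1}$ triangles at each level $M>N$, and sum the resulting geometric series. Your explicit pruning of even $l$ at levels $M>N$ is just a more detailed justification of the count $2^{M-N-1}$ that the paper asserts directly.
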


\begin{proof}
By definition
$$
g = \max\left(2d\cdot2^{-N},  \max\limits_{l\cdot2^{-M}\in I(N,k)}\left(2^{-M} + 2d\cdot\left(2^{-N} - 2^{-M}\right) -h_{l\cdot2^{-M}}\right)\right).
$$

Then
$$
g =2d\cdot2^{-N} + \max\left(0,  \max\limits_{l\cdot2^{-M}\in I(N,k)}\left((1-2d)\cdot2^{-M} -h_{l\cdot2^{-M}}\right)\right).
$$

Then
$$
\int_{I(N,k)} g(x)dx \le 2d\cdot2^{-2N} + \sum\limits_{l\cdot2^{-M}\in I(N,k)}\int_{I(N,k)}\max\left(0, (1-2d)\cdot2^{-M} -h_{l\cdot2^{-M}}(x)\right)dx.
$$

The following integral equals the area of an isosceles triangle such that its height has length $(1-2d)\cdot2^{-M}$ and the slope of its legs equal $\varkappa$:

$$
\int_{\mathbb R}\max\left(0, (1-2d)\cdot2^{-M} -h_{l\cdot2^{-M}}(x)\right)dx = (1-2d)^2\cdot2^{-2M}/\varkappa.
$$

If $M>N$ the number of triangles equal $2^{M-N-1}.$ So

$$
\int_{I(N,k)} g(x)dx \le
2d\cdot2^{-2N} + \left(2^{-2N} + \sum\limits_{M>N} 2^{-2M}\cdot 2^{M-N-1} \right)\cdot(1-2d)^2/\varkappa .
$$

Since
$$
\sum\limits_{M>N} 2^{-2M}\cdot 2^{M-N-1} = 2^{-N-1}\sum\limits_{M>N} 2^{-M} = 2^{-2N}/2,
$$
then
$$
\int_{I(N,k)} g(x)dx \le 2^{-2N}\cdot\left(2d + \frac32\cdot(1-2d)^2/\varkappa\right) < 2^{-2N}\cdot\left(2d + \frac3{2\varkappa}\right).
$$
\end{proof}

\begin{defi}
\label{induction-step}
Let $i$ be a non-negative integer and let the functions $f_{N,k}^j$ be already defined for all admissible $(N,k)$ and all non-negative integers $j$ smaller than $i.$

Let us fix for a while a real number $y_0 = y_0(N, k, i) > 2d\cdot2^{-N}.$
Let $g = g_{N, k, y_0}$ be the function from Definition~\ref{minimal-function}. 

If $i=0,$ we set $f_{N,k}^i = g.$

Let $i>0.$
We set $f_{N, k}^{i} = f_{N,k}^{i-1}$ if the function $f_{N,k}^{i-1}$ is stabilized. In other case
\begin{equation}
\label{maximum-from-definition}
f_{N,k}^i = \max\left(g, f_{M,l}^{i-1} + 2d\cdot \left(2^{-N} - 2^{-M}\right)\right),
\end{equation}
where the maximum is taken over all stabilized functions $f_{M,l}^{i-1}$ such that $N< M.$

Now return to the choice of the number $y_0.$ We must choose it so that the above construction gives
$$\int f_{N, k}^i(x) dx = S_{N, k}.$$
We call the number $y_0$ {\it a reference value} of the function. We call the function $g$ a {\it base part} of the function $f_{N, k}^i$.

Now we choose a non-stabilized function $f_{M, l}^i$ such that
$$
f_{M, l}^i - f_{N, k}^i \ge 2d\cdot \left(2^{-M} - 2^{-N}\right)
$$
for any $N> M$ and admissible pair $N,$ $k$ at each point where both functions are defined. Among such functions we choose one with minimal $M$. If there are several such functions we choose one with minimal $l$ among them. We set
$$T(M, l) = i.$$

Now we extend just stabilized function $f_{M, l}^i$ to the whole line by requiring that on rays $x \ge l\cdot2^{-M}$ and $x \le (l-1)\cdot2^{-M}$ the function is linear, its slope equals $-\varkappa$ on the right ray, and $\varkappa$ on the left one.
\end{defi}

\begin{figure}
\center
\includegraphics{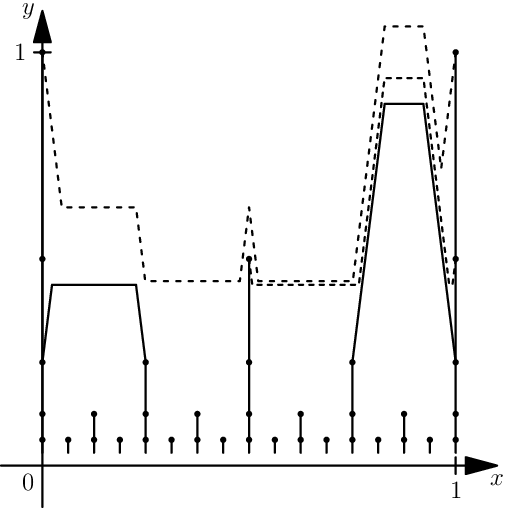}
\includegraphics{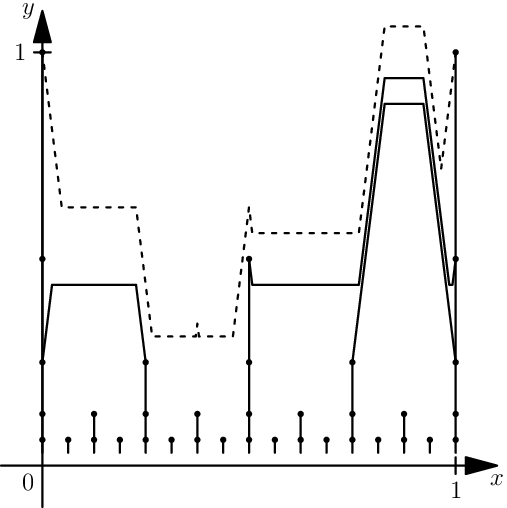}
\caption{A change of functions $f_{N,k}^i$ during stabilization of one of them. Stabilized functions are depicted with solid lines. The dashed line on the interval $(1/2;3/4)$ became higher because one more function appeared in the maximum function~(\ref{maximum-from-definition}). The dashed line on the interval $(1/4;1/2)$ became lower because the integral of function preserves.}
\label{induction-step-figure}
\end{figure}

In Figure~\ref{induction-step-figure} we show how functions $f_{N,k}^i$ change during stabilization of one function.

In Claims~\ref{derivative-claim}--\ref{T-is-bijection} we will check correctness of Definition~\ref{induction-step}.

We fix $i$ and we assume that all functions $f_{N,k}^j$ are correctly defined for all non-negative integers $j < i.$ We also assume that for each such $j$ we successfully chose a function whose moment of stabilization equals $j$. We will consider a family of functions $f_{N,k}^i$ depending on the parameter $y_0$ --- the reference value of a function.

\begin{clai}\label{derivative-claim}
Each function $f_{N,k}^j$ is piecewise linear, its derivative equals $0$ or $\pm\varkappa.$
\end{clai}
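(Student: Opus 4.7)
The plan is induction on $j$, using that the three basic building blocks of the construction in Definition~\ref{induction-step}, namely the constants $y_0$, the absolute-value functions $h_{x_0}$, and already-stabilized functions, all have slopes in $\{0,\pm\varkappa\}$, and that taking max/min of finitely many piecewise linear functions with this property preserves the property.

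\textbf{Base case $j=0$.} Here $f_{N,k}^{0}=g_{N,k,y_0}$ with $y_0=y_0(N,k,0)>2d\cdot 2^{-N}$. Every function appearing in the min/max defining $g$ is either a constant or the sum of a constant and a function $\pm h_{x_0}$, hence piecewise linear with slopes in $\{0,\pm\varkappa\}$. By Claim~\ref{piecewise-linear-g}, only finitely many of them are relevant on $I(N,k)$, so $g$ is piecewise linear, and taking min or max of finitely many functions with slopes in $\{0,\pm\varkappa\}$ gives a function whose slopes lie in the same set.

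\textbf{Inductive step.} Assume the claim for all functions $f_{M,l}^{j'}$ with $j'<j$. If $f_{N,k}^{j-1}$ is already stabilized then $f_{N,k}^{j}=f_{N,k}^{j-1}$ and there is nothing to prove. Otherwise formula~(\ref{maximum-from-definition}) expresses $f_{N,k}^{j}$ as the maximum of the base part $g$ (treated in the base case) and finitely many functions of the form $f_{M,l}^{j-1}+2d(2^{-N}-2^{-M})$; here finiteness is guaranteed because at step $j-1$ only finitely many functions can have been stabilized (exactly $j$ of them, since $T$ is assigned one new admissible pair per step). Adding a constant and the linear extension of each $f_{M,l}^{j-1}$ outside $I(M,l)$ by rays of slope $\pm\varkappa$ described at the end of Definition~\ref{induction-step} both preserve the slope set $\{0,\pm\varkappa\}$, so by the inductive hypothesis all arguments of the maximum are piecewise linear with slopes in $\{0,\pm\varkappa\}$. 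Their maximum is therefore piecewise linear with slopes in the same set.

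No step should present difficulty: everything reduces to the closure of the class of piecewise linear functions with slopes in $\{0,\pm\varkappa\}$ under finite max, finite min, addition of constants, and the one-sided linear extension of slope $\pm\varkappa$. The only mild care needed is to check that each max/min in the construction is over a finite collection of functions, which in the base part $g$ uses $y_0>2d\cdot 2^{-N}$ via Claim~\ref{piecewise-linear-g}, and in~(\ref{maximum-from-definition}) uses that at stage $j-1$ only finitely many $f_{M,l}^{j-1}$ have been stabilized.
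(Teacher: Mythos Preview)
Your proof is correct and follows essentially the same inductive approach as the paper: induction on $j$, base case via Claim~\ref{piecewise-linear-g}, inductive step via the finite maximum~(\ref{maximum-from-definition}) and the linear extension of stabilized functions. You supply more detail than the paper (closure of the slope set under finite max/min, explicit finiteness of the stabilized set), but the argument is the same.
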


\begin{proof}
We use induction on $j$. The function was defined as a maximum of a finite number of functions defined on the segment $I(N,k)$ for which the statement is true by induction.
The base of induction for $j=0$ is true by Claim~\ref{piecewise-linear-g}.
After stabilization the function is extended outside the segment $I(N,k)$ as a linear function with a slope $\pm\varkappa.$
\end{proof}

\begin{clai}
\label{boundary-values}
For any $j\le i$ and admissible pair $N,k$
$$
f_{N,k}^j \left((k-1)\cdot2^{-N}\right) = f_{N,k}^j \left(k\cdot2^{-N}\right) = 2^{-N}.
$$
\end{clai}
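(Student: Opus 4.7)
My plan is to prove the claim by induction on $j$, treating only the left endpoint $x_0 = (k-1)\cdot 2^{-N}$ since the right endpoint is symmetric. The key arithmetic relation is $\varkappa d = 1$ with $d = 1/(8C) < 1/2$.

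For the base case $j=0$, the function $f_{N,k}^0$ coincides with the base part $g = g_{N,k,y_0}$ of Definition~\ref{minimal-function}. If $y_0 > 2^{-N}$, direct evaluation gives $g(x_0) = \min(y_0,\, 2^{-N},\, 2^{-N}(1+\varkappa)) = 2^{-N}$, since $h_{(k-1)\cdot 2^{-N}}(x_0) = 0$ and $h_{k\cdot 2^{-N}}(x_0) = \varkappa\cdot 2^{-N}$. If $y_0 \le 2^{-N}$, the term in the max corresponding to $M=N$, $l=k-1$ (interpreted as $l=0$ when $k=1$, consistent with the marked points $(0,2^{-N})$ listed before Definition~\ref{minimal-function}) contributes exactly $2^{-N}$. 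To see that no other term exceeds $2^{-N}$: the $M=N$, $l=k$ contribution equals $2^{-N}(1-\varkappa) < 0$, while any $M > N$ contribution is at most
$$2^{-M}(1-2d) + 2d\cdot 2^{-N} \le 2^{-N-1}(1-2d) + 2d\cdot 2^{-N} = 2^{-N}\bigl(\tfrac12 + d\bigr) < 2^{-N}.$$

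For the inductive step, assume the claim for all $j' < j$. If $f_{N,k}^j = f_{N,k}^{j-1}$ we are done. Otherwise, formula~(\ref{maximum-from-definition}) writes $f_{N,k}^j(x_0)$ as the maximum of $g(x_0) = 2^{-N}$ (by the base-case calculation above) and terms $f_{M,l}^{j-1}(x_0) + 2d(2^{-N}-2^{-M})$ for stabilized $f_{M,l}^{j-1}$ with $M > N$. Since $x_0$ is a dyadic rational at level $N < M$, it is either an endpoint of $I(M,l)$, in which case the inductive hypothesis gives $f_{M,l}^{j-1}(x_0) = 2^{-M}$, or it lies strictly outside $I(M,l)$, in which case the piecewise linear extension introduced at the end of Definition~\ref{induction-step} satisfies $f_{M,l}^{j-1}(x_0) < 2^{-M}$. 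Either way the candidate is bounded by $2^{-M}(1-2d) + 2d\cdot 2^{-N} \le 2^{-N}(\tfrac12 + d) < 2^{-N}$. Hence the maximum equals $2^{-N}$, achieved by $g$, and the induction closes.

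The only delicate point is the index bookkeeping in the base case: one must verify that $x_0$ always appears among the dyadic points $l\cdot 2^{-M}$ entering the max defining $g$ when $y_0 \le 2^{-N}$, adopting the convention $l=0$ at the left corner of $I(N,1)$. Once this is settled, the entire argument collapses to the single inequality $2^{-M}(1-2d) + 2d\cdot 2^{-N} < 2^{-N}$ for $M > N$, which is essentially what motivates the particular choice $d = 1/(8C)$ in Lemma~\ref{main-lemma}.
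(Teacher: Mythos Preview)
Your proof is correct and follows the same inductive strategy as the paper: both argue by induction on $j$, both reduce the inductive step to showing that every stabilized term $f_{M,l}^{j-1}(x_0)+2d(2^{-N}-2^{-M})$ with $M>N$ is strictly below $2^{-N}$, using that $x_0$ can only be an endpoint of (or lie outside) $I(M,l)$. Your treatment of the base case $g(x_0)=2^{-N}$ is more explicit than the paper's one-line ``true by definition'', and your flagging of the $l=0$ convention at the left corner of $I(N,1)$ is a genuine clarification. One minor remark: the inequality $2^{-M}(1-2d)+2d\cdot 2^{-N}<2^{-N}$ for $M>N$ only needs $d<1/2$, so it does not by itself motivate the specific choice $d=1/(8C)$; that choice is driven by the integral estimates in Claims~\ref{g-integral} and~\ref{minimal-y0}.
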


\begin{proof}
We use induction by $j$. For the base function the equality is true by definition, so the induction base is done. 

If functions $f_{N,k}^{j-1}$ and $f_{N,k}^j$ have distinct values at  $x_0=k\cdot2^{-N}$, then by~(\ref{maximum-from-definition}) $f_{N,k}^j(x_0)= f_{M,l}^{j-1}(x_0) + 2d\cdot \left(2^{-N}-2^{-M}\right)>2^{-N}$ for some stabilized function $f_{M,l}^{j-1}$ and $M>N$. The point $x_0$ can not be the inner point of the segment $I(M,l)$. If $x_0$ is its endpoint, then by induction $f_{M,l}^{j-1}(x_0)=2^{-M}$, hence $ f_{M,l}^{j-1}(x_0) + 2d\cdot \left(2^{-N}-2^{-M}\right)<2^{-N}$, a contradiction. If it is outside the segment, then  $f_{M,l}^{j-1}(x_0)$ is even smaller due to the way we extend stabilized functions. The argument for the second endpoint $x_1=(k-1)\cdot2^{-N}$ is similar, so we have the induction step.
\end{proof}

\begin{clai}
\label{influence-of-stabilized2}
Let $j\le i$, $f_{M, l}^{j-1}$ be stabilized, $f_{N,k}^{j-1}$ be not stabilized, $M > N$, $x\in I(N,k)$ and
$$
f_{N,k}^j (x) =  f_{M,l}^{j-1}(x) + 2d\cdot \left(2^{-N} - 2^{-M}\right).
$$
Then $I(M,l)\subset I(N,k)$. If $x$ does not lie in the interior of the segment $I(M,l),$ then $f_{N,k}^j(x) = g(x),$ where $g$ is the base part of the function $f_{N,k}^j(x).$
\end{clai}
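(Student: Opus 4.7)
The plan is to split the claim into its two conclusions: (i) $I(M,l) \subset I(N,k)$, and (ii) $f_{N,k}^j(x) = g(x)$ for $x$ outside $\mathrm{int}\, I(M,l)$. Both rest on two ingredients that I would extract at the outset: the Lipschitz lower bound $f_{N,k}^j(x) \ge 2^{-N} - \varkappa|x - x_0|$ for either endpoint $x_0 \in \{(k-1)2^{-N}, k\cdot 2^{-N}\}$, coming from Claims~\ref{derivative-claim} and~\ref{boundary-values}; and the explicit linear extension $f_{M,l}^{j-1}(y) = 2^{-M} - \varkappa\cdot\mathrm{dist}(y, I(M,l))$ for $y$ outside $I(M,l)$, read off directly from the stabilization prescription in Definition~\ref{induction-step}.

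For (i) I would argue by contradiction using the dyadic trichotomy: since $M > N$, either $I(M,l) \subset I(N,k)$, or the intervals share a single endpoint, or they are disjoint. In the shared-endpoint case, say $(l-1)\cdot 2^{-M} = k\cdot 2^{-N}$, the two ingredients combine at once: the difference $f_{N,k}^j(x) - (f_{M,l}^{j-1}(x) + 2d(2^{-N} - 2^{-M}))$ is at least $(1-2d)(2^{-N} - 2^{-M})$, which is strictly positive because $M > N$ and $d = 1/(8C) < 1/2$, contradicting the hypothesized equality; the other shared-endpoint subcase is symmetric. In the disjoint case, the key observation is that because $M > N$ both endpoints of $I(N,k)$ lie on the $2^{-M}$-lattice, so $\mathrm{dist}(x, I(M,l)) \ge 2^{-M}$ for every $x \in I(N,k)$; this forces $f_{M,l}^{j-1}(x) \le 2^{-M}(1 - \varkappa) < 0$, hence $f_{M,l}^{j-1}(x) + 2d(2^{-N} - 2^{-M}) < 2d\cdot 2^{-N} < g(x) \le f_{N,k}^j(x)$, where the penultimate inequality comes from inspecting the two forms of $g$ in Definition~\ref{minimal-function} under the standing choice $y_0 > 2d\cdot 2^{-N}$ (in both regimes one gets $g \ge \min(y_0, 2^{-N})$).

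For (ii), given $I(M,l) \subset I(N,k)$ and (WLOG) $x \ge l\cdot 2^{-M}$, the extension formula yields
\[
f_{M,l}^{j-1}(x) + 2d(2^{-N} - 2^{-M}) = 2^{-M} + 2d(2^{-N} - 2^{-M}) - h_{l\cdot 2^{-M}}(x),
\]
which, since $l\cdot 2^{-M} \in I(N,k)$ and $(M,l)$ is admissible, is precisely one of the entries of the $\max$ defining $g$ in the regime $y_0 \le 2^{-N}$. Hence $g(x) \ge f_{N,k}^j(x)$, and combined with the standing inequality $g \le f_{N,k}^j$ this gives $f_{N,k}^j(x) = g(x)$. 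The regime $y_0 > 2^{-N}$ is vacuous: the right-hand side is then at most $2^{-M} + 2d(2^{-N} - 2^{-M}) < 2^{-N}$, whereas in that regime $g \ge 2^{-N}$ throughout $I(N,k)$, so $f_{N,k}^j \ge g > \mathrm{RHS}$ already contradicts the equality. The main subtlety throughout is the precise numerical matching: the stabilization offset $2d(2^{-N} - 2^{-M})$ is calibrated exactly so that the linear extension of $f_{M,l}^{j-1}$ through the endpoint of $I(M,l)$ coincides with the $g$-contribution from that endpoint, which is what makes both parts of the proof fit together and is the hidden content of the choice $d = 1/(8C)$.
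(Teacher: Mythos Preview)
Your proof is correct and follows essentially the same approach as the paper: both parts hinge on the slope bound from Claim~\ref{derivative-claim}, the boundary values from Claim~\ref{boundary-values}, the explicit linear extension of stabilized functions, and the recognition of $2^{-M} + 2d(2^{-N}-2^{-M}) - h_{l\cdot 2^{-M}}$ as a term in the maximum defining $g$. The only organizational difference is that in part~(i) you split into the shared-endpoint and strictly-disjoint subcases, whereas the paper handles both at once via the single comparison $f_{M,l}^{j-1} + (2^{-N}-2^{-M}) \le g$ on $I(N,k)$; your extra case split is harmless but unnecessary.
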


\begin{proof}
Suppose the contrary. Let the segment $I(M,l)$ be not contained in the segment $I(N,k).$ Then either  $l\cdot 2^{-M} \le (k-1)\cdot2^{-N}$ or $(l-1)\cdot2^{-M} \ge k\cdot 2^{-N}.$ Consider the first case. The second is similar.

By Claim~\ref{boundary-values} $f_{M,l}^{j-1} (l\cdot 2^{-M}) = 2^{-M}.$ Since  $f_{M,l}^{j-1}$ is stabilized, $f_{M,l}^{j-1} (l\cdot 2^{-M} + x) = 2^{-M} - \varkappa x$ for $x\ge0$. In particular, $f_{M,l}^{j-1} ((k-1)\cdot2^{-N})\le 2^{-M}.$ The function $g$ equals $2^{-N}$ at the point $(k-1)\cdot2^{-N}$ and its derivative is at least $-\varkappa$ almost everywhere in a function domain. Hence $f_{M,l}^{j-1}  + 2^{-N} - 2^{-M}\le g$ on the segment $I(N,k).$ Since $d<1/2,$ $f_{M,l}^{j-1}  + 2d\cdot\left(2^{-N} - 2^{-M}\right)< g.$ Since $g \le f_{N,k}^j,$ we get a contradiction.

Now let $x$ not lie in the interior of the segment $I(M,l).$ We can assume that $x\ge l\cdot2^{-M},$ the second case is similar. Then

$$
f_{N,k}^{j}(x) - f_{M,l}^{j-1}(x)\ge f_{N,k}^{j} (l\cdot 2^{-M}) - f_{M,l}^{j-1}(l\cdot 2^{-M}) \ge 2d\cdot\left(2^{-N} - 2^{-M}\right),
$$
where the first inequality follows from the fact that the derivative of the function $f_{N,k}^{j}$ is at least $-\varkappa$ almost everywhere in $I(N,k)$ by Claim~\ref{derivative-claim}, and that the derivative of a function $f_{M,l}^{j-1}$ equals $-\varkappa$ to the right from the point  $l\cdot 2^{-M}$; and the second inequality follows from the fact that the function $f_{N,k}^j$ was defined as a maximum~(\ref{maximum-from-definition}) with the function $f_{M,l}^{j-1}$ in it. By assumptions of the present claim both these inequalities are actually equalities. So
$$
f_{N,k}^{j}(x) = 2^{-M} + 2d\cdot\left(2^{-N} - 2^{-M}\right) - \varkappa\cdot\left(x-l\cdot2^{-M}\right) \le g(x)
$$
by definition of $g.$ Since the function $f_{N,k}^{j}$ is not smaller than its base part at each point, $f_{N,k}^{j}(x) = g(x).$
\end{proof}

\begin{clai}
\label{influence-of-stabilized}
Let $j$ be the moment of stabilization of the function $f_{M,l}^{j+1}$ and the segment $I(M,l)$ be not contained in the segment $I(N,k).$
Then $f_{N, k}^{j}=f_{N, k}^{j+1}.$
\end{clai}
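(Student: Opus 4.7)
I would split the analysis according to whether $f_{N,k}^{j}$ is already stabilized at step $j$. If $T(N,k)\le j$, then $f_{N,k}^{j+1}=f_{N,k}^{j}$ is automatic from the convention in Definition~\ref{induction-step}. Otherwise both $f_{N,k}^{j}$ and $f_{N,k}^{j+1}$ are computed via the maximum~(\ref{maximum-from-definition}); the only possible difference is that the list of stabilized functions of index greater than $N$ entering this maximum at step $j+1$ may contain the newly stabilized $f_{M,l}^{j}$. If $M\le N$ then $f_{M,l}$ is not admitted in either maximum and there is nothing to prove, so I am reduced to the subcase $M>N$.

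In this subcase the aim is to show that the extra candidate $f_{M,l}^{j}+2d(2^{-N}-2^{-M})$ is pointwise dominated on $I(N,k)$ by the base part $g$ of $f_{N,k}^{j}$. Granted this, the maximum, and hence its integral over $I(N,k)$, is unchanged when we keep the same reference value $y_0$ and pass from step $j$ to step $j+1$, so the reference value also remains the same and the claim follows.

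For the pointwise inequality I assume without loss of generality that $l\cdot 2^{-M}\le (k-1)\cdot 2^{-N}$ (the opposite possibility is symmetric, since the hypothesis $I(M,l)\not\subset I(N,k)$ together with $M>N$ forces $I(M,l)$ to sit entirely on one side of $I(N,k)$). The extension rule for the just-stabilized $f_{M,l}^{j}$ then gives $f_{M,l}^{j}(x)=2^{-M}-\varkappa(x-l\cdot 2^{-M})$ for every $x\in I(N,k)$, while an inspection of Definition~\ref{minimal-function} yields the matching lower bound $g(x)\ge 2^{-N}-\varkappa(x-(k-1)\cdot 2^{-N})$ on $I(N,k)$ in both regimes of $y_0$: in the regime $y_0\le 2^{-N}$ it is realized directly by the choice $M'=N$, $l'=k-1$ in the inner maximum, and in the regime $y_0>2^{-N}$ it follows from $g((k-1)\cdot 2^{-N})=2^{-N}$ combined with the fact that $g$, being a minimum of functions of slope in $[-\varkappa,\varkappa]$, has slope at least $-\varkappa$. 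Combining these bounds with the elementary inequality $(1-2d)2^{-M}\le (1-2d)2^{-N}$, valid because $M>N$ and $d<1/2$, gives
\[
f_{M,l}^{j}(x)+2d(2^{-N}-2^{-M})\le 2^{-N}-\varkappa\bigl(x-(k-1)\cdot 2^{-N}\bigr)\le g(x),
\]
as required.

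The main obstacle I anticipate is the uniform treatment of the two regimes of $y_0$ in Definition~\ref{minimal-function}, since the formulas defining $g$ are of different shape in the two cases. Once the lower bound on $g$ is secured in both regimes, the rest of the argument is the same kind of slope comparison that already underlies Claim~\ref{influence-of-stabilized2}.
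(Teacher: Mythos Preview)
Your proof is correct and follows essentially the same route as the paper's: the same case split (stabilized, $M\le N$, $M>N$), and in the last case the same slope comparison showing $f_{M,l}^{j}+2d(2^{-N}-2^{-M})\le g$ on $I(N,k)$. The only difference is cosmetic: the paper packages that comparison into a reference to Claim~\ref{influence-of-stabilized2}, while you re-derive it directly and are a bit more explicit about why the reference value $y_0$ is unchanged.
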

\begin{proof}
If the function $f_{N, k}^{j}$ is stabilized, the statement is true.

If $M\le N$ the sets of stabilized functions in the maximum~(\ref{maximum-from-definition}) for functions $f_{N, k}^{j}$ and $f_{N, k}^{j+1}$ coincide. So in this case the statement is also true.

In other cases for the function $f_{N, k}^{j+1}$ this set contains one more function, actually it is the function $f_{M,l}^{j}.$ However by Claim~\ref{influence-of-stabilized2} the function $f_{M,l}^{j}$ does not alter the maximum for the function $f_{N, k}^{j+1}$. This means that $f_{N, k}^{j}=f_{N, k}^{j+1}.$
\end{proof}

Now we discuss how the area below the graph of a function $f_{N,k}^i$ depends on the choice of the reference value $y_0.$

\begin{clai}
\label{maximal-y0-claim}
Let $y_0 = (\varkappa/2 +1)\cdot 2^{-N}.$ Then the function $f_{N,k}^i$ is linear and increasing on the left part of the segment $I(N,k)$, and it is linear and decreasing on the right part.
\end{clai}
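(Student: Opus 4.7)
The plan is to show that with $y_0 = (\varkappa/2+1)\cdot 2^{-N}$ the base part $g = g_{N,k,y_0}$ is already the full ``tent'' over $I(N,k)$ and dominates every other term in the maximum of Definition~\ref{induction-step}, so that $f_{N,k}^i$ coincides with $g$.

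First I would identify $g$ explicitly. Since $y_0 > 2^{-N}$, Definition~\ref{minimal-function} gives
$$
g(x) = \min\bigl(y_0,\ 2^{-N}+h_{(k-1)\cdot 2^{-N}}(x),\ 2^{-N}+h_{k\cdot 2^{-N}}(x)\bigr),
$$
and the two sloped pieces, with slopes $+\varkappa$ and $-\varkappa$, meet at the midpoint of $I(N,k)$ at the common value $2^{-N}+\varkappa\cdot 2^{-N}/2 = (1+\varkappa/2)\cdot 2^{-N} = y_0$. Hence the cap $y_0$ is never binding: $g$ is linear of slope $+\varkappa$ from $((k-1)\cdot 2^{-N},\, 2^{-N})$ up to the peak at the midpoint, and linear of slope $-\varkappa$ down to $(k\cdot 2^{-N},\, 2^{-N})$, which already matches the shape claimed.

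Next I would verify that no stabilized function entering formula~(\ref{maximum-from-definition}) pushes $f_{N,k}^i$ above $g$, i.e.\ that $f_{M,l}^{j-1}(x) + 2d(2^{-N} - 2^{-M}) \le g(x)$ for every $x\in I(N,k)$ and every stabilized $f_{M,l}^{j-1}$ with $M>N$. By Claim~\ref{influence-of-stabilized2} it suffices to check this for $I(M,l)\subset I(N,k)$; otherwise the competing term never realizes the maximum at any point of $I(N,k)$. On $I(M,l)$ itself, Claims~\ref{derivative-claim} and~\ref{boundary-values} give the Lipschitz bound
$$
f_{M,l}^{j-1}(x) \le 2^{-M} + \varkappa\cdot\min\bigl(x-(l-1)\cdot 2^{-M},\; l\cdot 2^{-M}-x\bigr),
$$
while the inclusions $(l-1)\cdot 2^{-M}\ge (k-1)\cdot 2^{-N}$ and $l\cdot 2^{-M}\le k\cdot 2^{-N}$, together with the piecewise formula for $g$, yield
$$
g(x) \ge 2^{-N} + \varkappa\cdot\min\bigl(x-(l-1)\cdot 2^{-M},\; l\cdot 2^{-M}-x\bigr).
$$
Subtracting gives $g - f_{M,l}^{j-1}\ge 2^{-N} - 2^{-M}$ on $I(M,l)$, which strictly exceeds $2d(2^{-N}-2^{-M})$ since $C\ge 1$ forces $d = 1/(8C)\le 1/8 < 1/2$.

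It remains to propagate the inequality from $I(M,l)$ to the rest of $I(N,k)$. Outside $I(M,l)$ the stabilized function $f_{M,l}^{j-1}$ is extended linearly with slope of magnitude $\varkappa$ pointing away from $I(M,l)$, while $g$ has slope of the same absolute value. A direct case analysis (four subcases depending on whether one moves toward the left or right endpoint of $I(N,k)$, and whether that motion lies in the increasing or decreasing half of the tent) shows that the margin $g - f_{M,l}^{j-1}$ is non-decreasing with the distance from $I(M,l)$: the slopes either cancel (margin constant) or reinforce (margin grows at rate $2\varkappa$), but never produce a decrease. Consequently the maximum in~(\ref{maximum-from-definition}) reduces pointwise to $g$, giving $f_{N,k}^i = g$ on $I(N,k)$ with the desired shape. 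The only delicate step is the slope bookkeeping outside $I(M,l)$, but it amounts to a routine case split rather than anything subtle.
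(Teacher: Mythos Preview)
Your argument is correct, but it is considerably more laborious than the paper's. You directly verify that every stabilized term $f_{M,l}^{j-1}+2d(2^{-N}-2^{-M})$ lies below the tent $g$, via a Lipschitz estimate on $I(M,l)$ and a slope comparison outside. The paper instead observes that $g$ is the pointwise largest function on $I(N,k)$ with slopes in $\{\pm\varkappa\}$ and boundary values $2^{-N}$; since $f_{N,k}^i\ge g$ everywhere, agrees with $g$ at both endpoints (Claim~\ref{boundary-values}), and has slope at most $\varkappa$ in absolute value (Claim~\ref{derivative-claim}), strict inequality anywhere would force a slope exceeding $\varkappa$ somewhere, a contradiction. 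This two-line maximality argument bypasses your case analysis entirely. Your approach has the minor virtue of making the margin $g-f_{M,l}^{j-1}\ge 2^{-N}-2^{-M}$ explicit, but the paper's extremality trick is the cleaner way to close the claim. One small wording point: your appeal to Claim~\ref{influence-of-stabilized2} for the case $I(M,l)\not\subset I(N,k)$ really uses the stronger statement proved there (that the stabilized term is strictly below $g$), not just the stated conclusion about realizing the maximum; the logic still goes through, but it would be clearer to say so.
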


\begin{proof}
It is easy to see that the base part $g$ of the function $f_{N,k}^i$ is precisely as described in the statement of the present claim. Let us show that $f_{N,k}^i \equiv g.$ At the endpoints of the segment $I(N,k)$ this equality is true by Claim~\ref{boundary-values}. From equality~(\ref{maximum-from-definition}) $f_{N,k}^i \ge g$ on the whole segment. If at some point the inequality is strict, the absolute value of the derivative at some point is greater than $\varkappa,$ which contradicts Claim~\ref{derivative-claim}.
\end{proof}

\begin{clai}
\label{maximal-y0-integral}
The area below the graph of the function from Claim~\ref{maximal-y0-claim} is greater than $S_{N,k}.$
\end{clai}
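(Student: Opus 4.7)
The plan is to compute the area under the function described in Claim~\ref{maximal-y0-claim} explicitly and compare it with the a priori upper bound on $S_{N,k}$ provided by inequality~(\ref{S-inequality}).

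First, I would pin down the exact shape of $f_{N,k}^i$ in the regime $y_0 = (\varkappa/2+1)\cdot 2^{-N}$. By Claim~\ref{maximal-y0-claim} this function is a tent on the segment $I(N,k)$ of length $2^{-N}$: it equals $2^{-N}$ at both endpoints (Claim~\ref{boundary-values}), rises with slope $+\varkappa$ from the left endpoint, and descends with slope $-\varkappa$ to the right endpoint (Claim~\ref{derivative-claim}). In particular the peak is attained at the midpoint of $I(N,k)$ at height $2^{-N} + \varkappa\cdot 2^{-N}/2 = y_0$.

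Next I would compute the integral. Decomposing the region under the graph into a rectangle of width $2^{-N}$ and height $2^{-N}$ together with an isosceles triangle of base $2^{-N}$ and height $\varkappa\cdot 2^{-N}/2$ gives
$$
\int_{I(N,k)} f_{N,k}^i(x)\,dx \;=\; 2^{-2N} + \tfrac{1}{2}\cdot 2^{-N}\cdot\tfrac{\varkappa}{2}\cdot 2^{-N} \;=\; \left(1+\tfrac{\varkappa}{4}\right)\cdot 2^{-2N}.
$$
Since $\varkappa = 8C$, the right-hand side equals $(1+2C)\cdot 2^{-2N}$.

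Finally, the upper bound in~(\ref{S-inequality}) gives $S_{N,k}\le 2C\cdot 2^{-2N}$, so
$$
\int_{I(N,k)} f_{N,k}^i(x)\,dx \;=\; (1+2C)\cdot 2^{-2N} \;>\; 2C\cdot 2^{-2N} \;\ge\; S_{N,k},
$$
which is the required inequality. There is no genuine obstacle here: the claim is essentially a bookkeeping statement, asserting that the largest admissible tent shape has area strictly exceeding the target, so that the intermediate value argument carried out in the surrounding text (choosing a reference value $y_0$ that hits $S_{N,k}$ exactly) has room to operate from above.
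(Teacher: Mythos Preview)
Your proof is correct and follows essentially the same approach as the paper: decompose the region under the tent into a $2^{-N}\times 2^{-N}$ rectangle plus an isosceles triangle of height $(\varkappa/2)\cdot 2^{-N}$, obtain $(1+\varkappa/4)\cdot 2^{-2N}$, and compare with the bound $S_{N,k}\le 2C\cdot 2^{-2N}$ from~(\ref{S-inequality}). The only cosmetic difference is that you substitute $\varkappa=8C$ to get $(1+2C)\cdot 2^{-2N}$ explicitly, while the paper leaves the comparison as $1+\varkappa/4>2C$.
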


\begin{proof}
The area below the graph of such function is a sum of the area of a square with the side $2^{-N}$ and the area of a triangle based on the side of the square. Then
$$
\int f_{N,k}^i(x)dx = 2^{-2N} + \frac12\cdot 2^{-N}\cdot (y_0 - 2^{-N}) = 2^{-2N} + \frac14\varkappa\cdot 2^{-2N} =\left(1 + \frac{\varkappa}4\right)\cdot 2^{-2N}> S_{N,k},
$$
since $S_{N, k} \le 2C\cdot 2^{-2N}$ by inequality~(\ref{S-inequality}) and $1 + \frac{\varkappa}4 > 2C$.
\end{proof}

\begin{clai}
\label{monotonicity-claim1}
The function $f_{N,k}^i$ continuously and monotonically depends on the reference value.
\end{clai}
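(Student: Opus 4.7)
The plan is to isolate the $y_0$-dependence of $f_{N,k}^i$ and show that it enters only through an outer $\min$ or $\max$ with the constant $y_0$, so that monotonicity and continuity are inherited from the monotonicity and continuity of those operations.

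First I would observe that in Definition~\ref{induction-step} the functions $f_{M,l}^{i-1}$ appearing in the maximum~(\ref{maximum-from-definition}) are already stabilized, hence fixed by the time we build $f_{N,k}^i$; they do not depend on the reference value currently being chosen. So all $y_0$-dependence of $f_{N,k}^i$ is concentrated in its base part $g = g_{N,k,y_0}$. From Definition~\ref{minimal-function}, in the regime $y_0 > 2^{-N}$ we have $g = \min(y_0, 2^{-N}+h_{(k-1)\cdot 2^{-N}}, 2^{-N}+h_{k\cdot 2^{-N}})$, and in the regime $y_0 \le 2^{-N}$ we have $g = \max(y_0, G)$, where $G$ is the inner maximum over $l\cdot 2^{-M}\in I(N,k)$ and is independent of $y_0$. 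In both regimes $g$ is the value, at each fixed $x$, of a continuous nondecreasing function of $y_0$ (namely $\min(y_0,\cdot)$ or $\max(y_0,\cdot)$ applied to a $y_0$-independent expression).

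Next I would verify that these two regimes agree at the transition $y_0 = 2^{-N}$: on the one hand, since $h_{\cdot}\ge 0$, the first formula gives $g\equiv 2^{-N}$; on the other hand, every term of $G$ is at most $2^{-N}$ (for $M=N$ the relevant term is $2^{-N}-h_{l\cdot 2^{-N}}\le 2^{-N}$, and for $M>N$ the bound follows from $2^{-M}+2d(2^{-N}-2^{-M}) = (1-2d)2^{-M}+2d\cdot 2^{-N}<2^{-N}$), so the second formula also yields $g\equiv 2^{-N}$. Consequently $g_{N,k,y_0}(x)$ is continuous and monotonically nondecreasing in $y_0$ for every fixed $x\in I(N,k)$. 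Taking the pointwise maximum with the fixed family of translated stabilized functions from~(\ref{maximum-from-definition}) preserves these properties, so $f_{N,k}^i(x)$ is continuous and monotonically nondecreasing in $y_0$ at every $x$.

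Finally, since all the functions involved are uniformly bounded above (e.g.\ by $(\varkappa/2+1)\cdot 2^{-N}$ on the compact segment $I(N,k)$, by Claim~\ref{maximal-y0-claim}), pointwise monotone continuous dependence upgrades, by dominated convergence and monotonicity of the Lebesgue integral, to continuous monotone dependence of $\int_{I(N,k)} f_{N,k}^i(x)\,dx$ on $y_0$. This is the sense in which the claim will be used at the next step (to solve the equation $\int f_{N,k}^i = S_{N,k}$ via the intermediate value theorem, using Claims~\ref{g-integral} and~\ref{maximal-y0-integral} as the bracketing bounds). There is no real obstacle here; the only subtle point is verifying the matching of the two piecewise formulas for $g$ at $y_0 = 2^{-N}$, which the computation above handles.
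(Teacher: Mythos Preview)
Your argument is correct and follows essentially the same route as the paper: both observe that the only $y_0$-dependence in~(\ref{maximum-from-definition}) sits in the base part $g$, that $g$ is pointwise continuous and nondecreasing in $y_0$, and that taking a maximum with $y_0$-independent functions preserves these properties. You add the explicit check that the two formulas for $g$ agree at $y_0 = 2^{-N}$, which the paper omits, and your final paragraph about the integral is really the content of the next claim (Claim~\ref{monotonicity-claim2}) rather than this one.
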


\begin{proof}
The function $f_{N,k}^i$ was defined as the maximum of the base part, which is continuous and monotonic on $y_0$, and other functions which do not depend on the reference value. Since the maximum function is continuous and monotonic on each of its arguments, we are done.
\end{proof}

\begin{clai}
\label{monotonicity-claim2}
The area below the graph of a function $f_{N,k}^i$ continuously and monotonically depends on the reference value.
\end{clai}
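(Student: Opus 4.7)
The key observation is that the domain $I(N,k)$ of $f^i_{N,k}$ does not depend on the reference value $y_0$, so the integral makes sense as an honest function of $y_0$ alone, and I only need to transfer the pointwise continuity and monotonicity from Claim~\ref{monotonicity-claim1} through the integral.

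\medskip

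For monotonicity, I would proceed directly: by Claim~\ref{monotonicity-claim1}, if $y_0 \le y_0'$ then $f^i_{N,k}(x;y_0) \le f^i_{N,k}(x;y_0')$ at every $x \in I(N,k)$. Integrating this inequality over $I(N,k)$ yields the corresponding inequality for the areas, which is exactly the desired monotonicity.

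\medskip

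For continuity, my plan is to apply the dominated convergence theorem (or, equivalently, a direct bounded-convergence argument since the measure space is finite). The dominating bound comes from the earlier claims of this section: by Claim~\ref{derivative-claim} the slope of $f^i_{N,k}$ is in $\{0, \pm\varkappa\}$, and by Claim~\ref{boundary-values} the function equals $2^{-N}$ at the endpoints of $I(N,k)$. Together these force
\[
0 \;\le\; f^i_{N,k}(x;y_0) \;\le\; 2^{-N} + \tfrac{\varkappa}{2}\cdot 2^{-N}
\]
on all of $I(N,k)$, a bound independent of $y_0$. Combined with the pointwise continuity of $y_0 \mapsto f^i_{N,k}(x;y_0)$ given by Claim~\ref{monotonicity-claim1}, this dominated bound implies that $\int_{I(N,k)} f^i_{N,k}(x;y_0)\,dx$ is continuous in $y_0$.

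\medskip

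The potentially delicate point is verifying that the uniform upper bound really holds across the whole recursive construction (since $f^i_{N,k}$ is defined as a maximum involving earlier stabilized functions), but this reduces to a single slope-plus-endpoint argument thanks to Claims~\ref{derivative-claim} and~\ref{boundary-values}, which already took care of that recursion. No further manipulation of the intricate definition of $f^i_{N,k}$ is required for this claim.
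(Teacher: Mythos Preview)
Your proof is correct and follows the same approach as the paper, which simply states that the claim ``immediately follows from Claim~\ref{monotonicity-claim1} and from monotonicity of the integral.'' Your dominated-convergence argument for continuity is a legitimate way to make rigorous what the paper leaves implicit; one could also observe directly that $g_{N,k,y_0}$, being a min or max against the constant $y_0$, is $1$-Lipschitz in $y_0$, hence so is $f^i_{N,k}$ and therefore its integral---but your route via the slope-and-endpoint bound is equally valid and no more work.
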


\begin{proof}
This immediately follows from Claim~\ref{monotonicity-claim1} and from monotonicity of the integral.
\end{proof}

\begin{clai}
\label{domination-of-stabilized}
Let $j<i,$ $f_{M_1, l_1}^j$ be stabilized and $I(M_1,l_1)\supset I(M,l)$.
Then on the segment $I(M,l)$
\begin{equation}
\label{domination-inequality}
f_{M_1, l_1}^{j} - f_{M, l}^{j} \ge 2d\cdot \left(2^{-M_1} - 2^{-M}\right).
\end{equation}
\end{clai}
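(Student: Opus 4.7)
My plan is to prove the inequality by induction on $j$. The base case $j = T(M_1, l_1)$ is nothing but the dominance criterion invoked in Definition~\ref{induction-step} at the moment when $f_{M_1, l_1}$ was stabilized, so there is nothing to prove. For $j > T(M_1, l_1)$ the value $f_{M_1, l_1}^j$ is frozen, and the task is to upper-bound $f_{M, l}^j$ on $I(M, l)$ by $f_{M_1, l_1}^{T(M_1, l_1)} - 2d(2^{-M_1}-2^{-M})$. If $(M, l)$ is itself already stabilized at time $\le j-1$, this is immediate from the inductive hypothesis. Otherwise $f_{M, l}^j$ is recomputed by the maximum in~(\ref{maximum-from-definition}), and I would estimate each term of this maximum separately.

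For a stabilized contribution $f_{M'', l''}^{j-1}(x) + 2d(2^{-M} - 2^{-M''})$, Claim~\ref{influence-of-stabilized2} reduces the analysis to $I(M'', l'') \subset I(M, l)$. Inside $I(M'', l'')$ the inductive hypothesis applied to $(M_1, l_1)$ and $(M'', l'')$ gives the bound after routine rearrangement (the differences $2d(2^{-M_1}-2^{-M''})$ and $2d(2^{-M}-2^{-M''})$ combine to exactly $2d(2^{-M_1}-2^{-M})$). On $I(M, l) \setminus I(M'', l'')$ I would use the extension rule $f_{M'', l''}^{j-1}(x) = 2^{-M''} - \varkappa|x - x_0|$, where $x_0$ is the nearest endpoint of $I(M'', l'')$, together with the lower bound $g_{M_1, l_1}^{T(M_1, l_1)}(x) \ge 2^{-M''} + 2d(2^{-M_1}-2^{-M''}) - \varkappa|x - x_0|$ that follows from plugging the admissible pair $(M'', l'')$ or $(M'', l''-1)$ corresponding to $x_0$ into the formula of Definition~\ref{minimal-function}; the two estimates combine arithmetically to give the desired inequality. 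The degenerate case $x_0 = 0 \in \partial I(M_1, l_1)$ is handled separately using Claim~\ref{boundary-values} and the slope bound on $f_{M_1, l_1}^{T(M_1, l_1)}$.

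The delicate ingredient is the base part $g_{M, l}^j$ itself. The key structural observation is that in the formula of Definition~\ref{minimal-function}, each tent term $2^{-M'} + 2d(2^{-M}-2^{-M'}) - h_{l' \cdot 2^{-M'}}$ contributing to $g_{M, l}^j$ has a sibling in the analogous formula for $g_{M_1, l_1}^{T(M_1, l_1)}$ which is larger by exactly $2d(2^{-M_1}-2^{-M})$; taking pointwise maxima over the admissible pairs supported in $I(M, l)$ therefore yields the desired bound on the tent component. The main obstacle I expect is the remaining reference-value contribution $y_0^j(M, l)$, especially in the case-1 regime $y_0 > 2^{-M}$: it must be controlled by combining the integral constraint $\int_{I(M, l)} f_{M, l}^j = S_{M, l}$, the monotonicity in $y_0$ from Claim~\ref{monotonicity-claim2}, and the pointwise bounds already obtained on the other terms in the maximum, forcing $y_0^j(M, l)$ to lie below the threshold dictated by $f_{M_1, l_1}^{T(M_1, l_1)}$ on $I(M, l)$.
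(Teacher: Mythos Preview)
Your inductive scheme and the base case are correct, and your treatment of the stabilized contributions $f_{M'',l''}^{j-1}+2d(2^{-M}-2^{-M''})$ via the inductive hypothesis for $(M_1,l_1)$ versus $(M'',l'')$ is exactly what the paper does. The gap is in the last paragraph: you correctly identify the reference value $y_0^j(M,l)$ as the obstacle, but the argument you sketch --- bounding $y_0^j$ directly through the integral constraint $\int f_{M,l}^j = S_{M,l}$ --- does not give a bound in terms of $f_{M_1,l_1}$; it only gives a bound in terms of $S_{M,l}$, which is unrelated to the threshold you need. The tent-sibling comparison also has an uncovered case when $y_0(M_1,l_1)>2^{-M_1}$, so that $g_{M_1,l_1}$ is not in tent form.

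The paper sidesteps both issues with one observation you are missing: going from step $j-1$ to step $j$, exactly one new function $f_{M_2,l_2}^{j-1}$ enters the maximum~(\ref{maximum-from-definition}), and since the integral must stay equal to $S_{M,l}$, Claim~\ref{monotonicity-claim2} forces $y_0^j\le y_0^{j-1}$. Hence $f_{M,l}^j(x)>f_{M,l}^{j-1}(x)$ can happen only at points where the new term $f_{M_2,l_2}^{j-1}+2d(2^{-M}-2^{-M_2})$ realizes the maximum; everywhere else the inductive hypothesis for $(M_1,l_1)$ versus $(M,l)$ at time $j-1$ already gives the bound. At the exceptional points one applies the inductive hypothesis to $(M_1,l_1)$ versus $(M_2,l_2)$ and the constants combine as you described. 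This step-by-step comparison eliminates the need to analyze the base part or bound $y_0^j$ from scratch.
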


\begin{proof}
Let $j_1$ be the moment of stabilization of the function $f_{M_1, l_1}^j.$ According to the choice of a function to stabilize in Definition~\ref{induction-step} inequality~(\ref{domination-inequality}) is true for $j=j_1$. We prove the claim by induction on $j$. Let inequality~(\ref{domination-inequality}) be true for smaller $j$ such that $j\ge j_1$ and for all such admissible pairs $(M,l)$ that $I(M_1,l_1)\supset I(M,l).$

Let $j>j_1.$ Then $f_{M_1, l_1}^{j-1} = f_{M_1, l_1}^{j}$. Let the moment of stabilization of the function $f_{M_2, l_2}^{j}$ is $j-1$. If the segment $I(M_2,l_2)$ is not contained in the segment $I(M,l)$, by Claim~\ref{influence-of-stabilized} $f_{M, l}^{j-1}=f_{M, l}^{j}$ and the inequality is proved.

Let $I(M_2,l_2)\subset I(M,l).$ Then the number of functions in the maximum~(\ref{maximum-from-definition}) for $f_{M, l}^{j}$ is greater by one than such number for the function $f_{M, l}^{j-1}$ (the additional function is $f_{M_2, l_2}^{j-1}$). By Claim~\ref{monotonicity-claim2} the reference value did not increase. Therefore the value of $f_{M, l}^{j}$ can be greater than the value of $f_{M, l}^{j-1}$ only at those points where $f_{M, l}^{j} = f_{M_2, l_2}^{j-1} + 2d\cdot \left(2^{-M} - 2^{-M_2}\right).$ So it is sufficient to check the inequality at such points, let $x_0$ be one of them. Then

\begin{multline*}
f_{M_1, l_1}^{j}(x_0) - f_{M, l}^{j}(x_0) = f_{M_1, l_1}^{j}(x_0) - f_{M_2, l_2}^{j-1}(x_0) - 2d\cdot \left(2^{-M} - 2^{-M_2}\right) = \\
= f_{M_1, l_1}^{j-1}(x_0) - f_{M_2, l_2}^{j-1}(x_0) - 2d\cdot \left(2^{-M} - 2^{-M_2}\right).
\end{multline*}

Since $I(M_2,l_2)\subset I(M_1,l_1),$ we can use the induction assumption:

$$
f_{M_1, l_1}^{j}(x_0) - f_{M, l}^{j}(x_0) \ge 2d\cdot \left(2^{-M_1} - 2^{-M_2}\right) - 2d\cdot \left(2^{-M} - 2^{-M_2}\right) = 2d\cdot \left(2^{-M_1} - 2^{-M}\right).
$$

\end{proof}

\begin{clai}
\label{domination-of-stabilized2}
Let $j<i,$ $f_{M_1, l_1}^j$ and $f_{M_2, l_2}^j$ be stabilized functions and $I(M_1,l_1)\supset I(M_2,l_2)$.
Then
\begin{equation}
\label{domination-inequality2}
f_{M_1, l_1}^{j} - f_{M_2, l_2}^{j} \ge 2d\cdot \left(2^{-M_1} - 2^{-M_2}\right)
\end{equation}
everywhere.
\end{clai}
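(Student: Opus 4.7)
The plan is to reduce Claim~\ref{domination-of-stabilized2} to the already established Claim~\ref{domination-of-stabilized} on the smaller segment $I(M_2, l_2)$, and then propagate the inequality outward using the fact that after stabilization both functions have controlled slopes everywhere. Note that $I(M_1,l_1)\supset I(M_2,l_2)$ forces $M_1\le M_2$, hence $2^{-M_1}-2^{-M_2}\ge 0$, so the inequality has the right sign.

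First I would invoke Claim~\ref{domination-of-stabilized}, applied at the same step $j$, with $(M,l)=(M_2,l_2)$: since $f_{M_1,l_1}^{j}$ is stabilized and $I(M_1,l_1)\supset I(M_2,l_2)$, we obtain
$$
f_{M_1,l_1}^{j}(x)-f_{M_2,l_2}^{j}(x)\ge 2d\bigl(2^{-M_1}-2^{-M_2}\bigr)
$$
for every $x\in I(M_2,l_2)$. In particular this holds at the two endpoints $x_\pm = l_2\cdot 2^{-M_2}$ and $(l_2-1)\cdot 2^{-M_2}$, at which $f_{M_2,l_2}^{j}(x_\pm)=2^{-M_2}$ by Claim~\ref{boundary-values}.

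Next I would propagate the inequality to points outside $I(M_2,l_2)$. For $x>l_2\cdot 2^{-M_2}$, the stabilized function $f_{M_2,l_2}^{j}$ is by definition the linear extension $f_{M_2,l_2}^{j}(x)=2^{-M_2}-\varkappa\bigl(x-l_2\cdot 2^{-M_2}\bigr)$. On the other hand, by Claim~\ref{derivative-claim} the derivative of $f_{M_1,l_1}^{j}$ takes only values $0,\pm\varkappa$ on its whole domain (which after stabilization is all of $\mathbb R$), so it is bounded below by $-\varkappa$ everywhere. Hence
$$
f_{M_1,l_1}^{j}(x)\ge f_{M_1,l_1}^{j}(l_2\cdot 2^{-M_2})-\varkappa\bigl(x-l_2\cdot 2^{-M_2}\bigr),
$$
and subtracting the explicit form of $f_{M_2,l_2}^{j}(x)$ the $\varkappa$-terms cancel and we are left with
$$
f_{M_1,l_1}^{j}(x)-f_{M_2,l_2}^{j}(x)\ge f_{M_1,l_1}^{j}(l_2\cdot 2^{-M_2})-2^{-M_2}\ge 2d\bigl(2^{-M_1}-2^{-M_2}\bigr)
$$
by the endpoint bound from the previous paragraph. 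The argument for $x<(l_2-1)\cdot 2^{-M_2}$ is symmetric. Combined with the earlier case $x\in I(M_2,l_2)$, this proves the inequality at every point of $\mathbb R$.

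I do not expect a serious obstacle here: the essential content is already in Claim~\ref{domination-of-stabilized}, and the extension relies only on the two elementary facts that (i) a stabilized function has slope exactly $\pm\varkappa$ outside its defining dyadic interval, and (ii) any function $f_{N,k}^{j}$ has slope at least $-\varkappa$ wherever it is defined. The one point worth checking carefully is that the linear ray of $f_{M_2,l_2}^{j}$ decreases at exactly the maximal rate $\varkappa$ permitted to $f_{M_1,l_1}^{j}$, so that the gap never shrinks as $x$ moves away from $I(M_2,l_2)$; this matches the geometry visible in Figure~\ref{induction-step-figure} and is what makes the slope-cancellation in the display above work.
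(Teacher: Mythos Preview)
Your proposal is correct and follows essentially the same route as the paper: first invoke Claim~\ref{domination-of-stabilized} on $I(M_2,l_2)$, then extend to each side using that $f_{M_2,l_2}^{j}$ has slope exactly $\pm\varkappa$ outside its interval while $f_{M_1,l_1}^{j}$ has slope bounded below by $-\varkappa$ (resp.\ above by $\varkappa$). The paper's proof is just a terser version of what you wrote.
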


\begin{proof}
Inequality~(\ref{domination-inequality2}) is true on the segment $I(M_2,l_2)$ by Claim~\ref{domination-of-stabilized}. In particular, it is true at the right endpoint of the segment $I(M_2,l_2)$. To the right from this endpoint it is also true because the function $f_{M_2, l_2}^j$ is linear and its slope equals $-\varkappa,$ and the function $f_{M_1, l_1}^j$ almost everywhere on this ray has derivative at least $-\varkappa.$ Points to the left from the segment $I(M_2, l_2)$ can be considered in a similar way.
\end{proof}

\begin{clai}
\label{clamp-claim}
Let $f_{M_1, l_1}^{i-1}$ and $f_{M_2, l_2}^{i-1}$ be stabilized, $x_0\in I(N,k)$ and $f_{N,k}^i(x_0) = f_{M_2,l_2}^{i-1}(x_0) + 2d\cdot\left(2^{-N} - 2^{-M_2}\right).$ Let also $I(N,k)\supset I(M_1,l_1)\supset I(M_2,l_2)$. Then 
$$f_{N,k}^i(x_0) = f_{M_1,l_1}^{i-1}(x_0) + 2d\cdot\left(2^{-N} - 2^{-M_1}\right).$$
\end{clai}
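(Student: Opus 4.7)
The plan is to split the claim into two opposite inequalities and derive each from machinery already established.

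First I would establish the lower bound
$$f_{N,k}^i(x_0) \ge f_{M_1,l_1}^{i-1}(x_0) + 2d\cdot\left(2^{-N} - 2^{-M_1}\right).$$
There are two subcases depending on whether $f_{N,k}^{i-1}$ is already stabilized. If it is not, then $f_{N,k}^i$ is given by the maximum~(\ref{maximum-from-definition}); since $f_{M_1,l_1}^{i-1}$ is stabilized and participates in that maximum, the bound is immediate. Note that $I(N,k)\supset I(M_1,l_1)$ forces $N\le M_1$, with the degenerate situation $N=M_1$ collapsing to $(N,k)=(M_1,l_1)$, in which case the whole conclusion becomes trivial. If instead $f_{N,k}^{i-1}$ is already stabilized, then $f_{N,k}^i=f_{N,k}^{i-1}$, and the same lower bound follows directly from Claim~\ref{domination-of-stabilized2} applied to the stabilized pair $f_{N,k}^{i-1}$, $f_{M_1,l_1}^{i-1}$.

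Next I would establish the reverse inequality by chaining the hypothesis with Claim~\ref{domination-of-stabilized2}. Since $I(M_1,l_1)\supset I(M_2,l_2)$ and both $f_{M_1,l_1}^{i-1}$ and $f_{M_2,l_2}^{i-1}$ are stabilized, that claim gives
$$f_{M_1,l_1}^{i-1}(x_0) - f_{M_2,l_2}^{i-1}(x_0) \ge 2d\cdot\left(2^{-M_1} - 2^{-M_2}\right).$$
Adding $2d\cdot(2^{-N}-2^{-M_1})$ to both sides and substituting the hypothesis $f_{N,k}^i(x_0)=f_{M_2,l_2}^{i-1}(x_0)+2d\cdot(2^{-N}-2^{-M_2})$ yields
$$f_{M_1,l_1}^{i-1}(x_0) + 2d\cdot\left(2^{-N} - 2^{-M_1}\right) \ge f_{N,k}^i(x_0),$$
which combined with the lower bound established above gives the desired equality.

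The argument is essentially an algebraic chain once the correct auxiliary facts are identified, so I do not foresee any substantive obstacle. The only care needed is keeping track of which function is stabilized at which moment and disposing of the degenerate boundary case $N=M_1$ separately; everything else is transparent from Claim~\ref{domination-of-stabilized2} and the max-definition~(\ref{maximum-from-definition}).
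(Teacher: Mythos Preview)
Your proposal is correct and follows essentially the same approach as the paper: establish the inequality $f_{N,k}^i(x_0) \ge f_{M_1,l_1}^{i-1}(x_0) + 2d(2^{-N}-2^{-M_1})$ via the maximum~(\ref{maximum-from-definition}) or Claim~\ref{domination-of-stabilized2} according to whether $f_{N,k}^{i-1}$ is stabilized, establish the reverse inequality by combining the hypothesis with Claim~\ref{domination-of-stabilized2} applied to the stabilized pair $f_{M_1,l_1}^{i-1}$, $f_{M_2,l_2}^{i-1}$, and dispose of the degenerate equality cases separately. The only cosmetic difference is that the paper phrases the conclusion as ``both inequalities are forced to be equalities'' rather than as two separate bounds, but the content is identical.
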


\begin{proof}
By Claim~\ref{domination-of-stabilized2} 
\begin{equation}
\label{clamp-inequality1}
f_{M_1, l_1}^{i-1}(x_0) - f_{M_2, l_2}^{i-1}(x_0) \ge 2d\cdot \left(2^{-M_1} - 2^{-M_2}\right).
\end{equation}

Since $I(N,k)\supset I(M_1,l_1)\supset I(M_2,l_2)$, $N \le M_1 \le M_2.$ If some of these inequalities are actually equalities, then the statement of the claim is trivially true. So we may assume that $N < M_1 < M_2.$

If $f_{N,k}^{i-1}$ is not stabilized, by Definition~\ref{induction-step}
\begin{equation}
\label{clamp-inequality2}
f_{N, k}^{i}(x_0) - f_{M_1, l_1}^{i-1}(x_0) \ge 2d\cdot \left(2^{-N} - 2^{-M_1}\right).
\end{equation}

We have $f_{N,k}^i(x_0) = f_{M_2,l_2}^{i-1}(x_0) + 2d\cdot(2^{-N} - 2^{-M_2}).$ Therefore inequalities~(\ref{clamp-inequality1}) and~(\ref{clamp-inequality2}) are actually equalities and we are done.

Let $f_{N,k}^{i-1}$ be stabilized. Then inequality~(\ref{clamp-inequality2}) is also true by Claim~\ref{domination-of-stabilized2} and by $f_{N,k}^{i}=f_{N,k}^{i-1}.$ The rest of the proof is similar.
\end{proof}

\begin{clai}
\label{overview-claim}
Let the function $f_{N,k}^{i-1}$ be not stabilized yet and $g$ be the base part of the function $f_{N,k}^i$. Then there exists such set $\mathcal F$ of stabilized functions that
\begin{enumerate}[label = \arabic*), ref = \arabic*)]
\item if $f_{M,l}^{i-1}\in\mathcal F,$ then $M>N$ and $I(M,l)\subset I(N,k)$;
\item if $f_{M_1, l_1}^{i-1}\in \mathcal F$ and $f_{M_2,l_2}^{i-1}\in \mathcal F,$ then the interiors of the segments $I(M_1, l_1)$ and $I(M_2, l_2)$ do not intersect;
\item $f_{N,k}^i = \max\limits_{f_{M,l}^{i-1}\in\mathcal F}\left(g, f_{M,l}^{i-1} + 2d\cdot\left(2^{-N}-2^{-M}\right)\right).$
\end{enumerate}
\end{clai}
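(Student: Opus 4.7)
The plan is to define $\mathcal F$ as the set of all stabilized functions $f_{M,l}^{i-1}$ satisfying $M > N$ and $I(M,l) \subset I(N,k)$ whose segment $I(M,l)$ is \emph{maximal} (with respect to inclusion) among all such segments. Properties 1) and 2) are then essentially built in: 1) holds by construction, and for 2), any two dyadic segments are either nested or have disjoint interiors, so two distinct elements of $\mathcal F$, being maximal, cannot be nested and therefore have disjoint interiors.

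The substance is in property 3). Let $\mathcal S$ denote the family of all stabilized functions $f_{M,l}^{i-1}$ with $M>N$ that appear in Definition~\ref{induction-step}. The inequality
$$\max\Bigl(g,\ \max_{f_{M,l}^{i-1}\in\mathcal F}\bigl(f_{M,l}^{i-1} + 2d(2^{-N}-2^{-M})\bigr)\Bigr) \le f_{N,k}^i$$
is immediate since $\mathcal F\subset\mathcal S$. For the reverse inequality, I would fix $x_0\in I(N,k)$ and consider which term realizes the maximum in Definition~\ref{induction-step} at $x_0$. If it is the base part $g$, there is nothing to show. Otherwise the maximum is attained by some $f_{M,l}^{i-1}\in\mathcal S$, and Claim~\ref{influence-of-stabilized2} forces $I(M,l)\subset I(N,k)$. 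If $f_{M,l}^{i-1}$ already belongs to $\mathcal F$ we are done; otherwise there exists a strictly larger stabilized segment $I(M',l')$ with $M'>N$ and $I(M',l')\subset I(N,k)$, and Claim~\ref{clamp-claim} applied to the nesting $I(N,k)\supset I(M',l')\supset I(M,l)$ shows that
$$f_{N,k}^i(x_0) = f_{M',l'}^{i-1}(x_0) + 2d(2^{-N} - 2^{-M'})$$
as well. Iterating this replacement (each step strictly decreases the index $M$, and $M>N$) I reach in finitely many steps some $f_{M^*,l^*}^{i-1}\in\mathcal F$ realizing the maximum at $x_0$.

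I do not anticipate a serious obstacle: the only thing to verify carefully at each iteration is that the hypothesis of Claim~\ref{clamp-claim} is met, namely that the chain $I(N,k)\supset I(M',l')\supset I(M,l)$ is valid at each step. This is automatic from the dyadic nature of the intervals and from the way $\mathcal F$ was selected, so the whole argument reduces to two applications of previously established claims together with a finite iteration.
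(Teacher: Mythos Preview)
Your argument is correct and uses the same two ingredients as the paper's proof, namely Claim~\ref{influence-of-stabilized2} and Claim~\ref{clamp-claim}. The only cosmetic difference is that the paper first collects into a set $\mathcal F_0$ those stabilized functions that actually realize the equality somewhere and then prunes nested ones, whereas you define $\mathcal F$ directly as the maximal stabilized segments inside $I(N,k)$; your $\mathcal F$ may therefore contain some harmless extra elements that never touch the maximum, but this does not affect property~3). Note also that your iteration is not really needed: a single application of Claim~\ref{clamp-claim} with $I(M',l')$ chosen maximal already gives the conclusion.
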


\begin{proof}
Let $\mathcal F_0$ be the set of all stabilized functions $f_{M,l}^{i-1}$, where $M>N,$ for which there exists a point $x\in I(N,k)$ such that
\begin{equation}
\label{anchor-equality}
f_{N,k}^i (x) =  f_{M,l}^{i-1}(x) + 2d\cdot \left(2^{-N} - 2^{-M}\right).
\end{equation}

The set of such points $x$ we denote by $X$.

If $f_{M,l}^{i-1}\in \mathcal F_0,$ then $I(M,l)\subset I(N,k)$ by Claim~\ref{influence-of-stabilized2}.

If $f_{M_1,l_1}^{i-1}\in \mathcal F_0,$ $f_{M_2,l_2}^{i-1}\in \mathcal F_0$ and $I(M_1, l_1)\supset I(M_2, l_2),$ we remove the function $f_{M_2,l_2}^{i-1}$ from the set $\mathcal F_0$. By Claim~\ref{clamp-claim}, if the equality~(\ref{anchor-equality}) is true at some point $x$ for $M = M_2$ and $l = l_2$, then it is also true at the same point for $M = M_1$ and $l = l_1.$ Hence for any point $x\in X$ there exists a function from the reduced set $\mathcal F_0$ such that the equality~(\ref{anchor-equality}) is true. We continue to remove functions from $\mathcal F_0$ while it is possible. The obtained set of functions is a sought-for set $\mathcal F.$
\end{proof}

\begin{clai}
\label{minimal-y0}
If the reference value is not greater than $\left(2d + C^{-1}/4\right)\cdot 2^{-N}$, the area below the graph of the function $f_{N,k}^i$ is less than $S_{N,k}.$
\end{clai}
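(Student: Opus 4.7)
The plan is to use the monotonicity furnished by Claim~\ref{monotonicity-claim2} to reduce to the endpoint $y_0=(2d+C^{-1}/4)\cdot 2^{-N}$ and then give a crude upper bound on $\int f_{N,k}^i$ that is still strictly below $S_{N,k}$. At this value of $y_0$ I would invoke Claim~\ref{overview-claim} to obtain a finite set $\mathcal F$ of stabilized functions $f_{M,l}^{i-1}$ with pairwise disjoint segments $I(M,l)\subset I(N,k)$ and $M>N$, such that
$$
f_{N,k}^i=\max\Bigl(g,\ \max_{f_{M,l}^{i-1}\in\mathcal F}\bigl(f_{M,l}^{i-1}+2d(2^{-N}-2^{-M})\bigr)\Bigr),
$$
with $f_{N,k}^i=g$ outside the interiors of the $I(M,l)$ (this is the contrapositive of Claim~\ref{influence-of-stabilized2}).

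Since the $I(M,l)$ are disjoint, this yields the decomposition
$$
\int f_{N,k}^i=\int_{I(N,k)} g+\sum_{f\in\mathcal F}\int_{I(M,l)}\bigl(f_{N,k}^i-g\bigr).
$$
On each $I(M,l)$ the inequality $g\ge y_0>2d\cdot 2^{-N}>2d(2^{-N}-2^{-M})$ gives
$$
f_{N,k}^i-g\le\bigl(f_{M,l}^{i-1}+2d(2^{-N}-2^{-M})-y_0\bigr)_+\le f_{M,l}^{i-1},
$$
so each bump contributes at most $\int_{I(M,l)}f_{M,l}^{i-1}=S_{M,l}$, the latter equality holding because the reference value of $f_{M,l}$ was chosen at its moment of stabilization to make its integral equal to $S_{M,l}$. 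The subtrees indexed by the pairs $(M,l)$ in $\mathcal F$ are disjoint and, since each $M>N$, none of them contains the root $(N,k)$; therefore $\sum_f S_{M,l}\le S_{N,k}-A_{N,k}$.

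For the base part, writing $g-y_0$ as the pointwise positive part of the max‑of‑triangles function in Definition~\ref{minimal-function} minus $y_0$ and noting that $(g-y_0)_+$ is pointwise non‑increasing in $y_0$, the bound extracted from the proof of Claim~\ref{g-integral} (namely $\int(g-2d\cdot 2^{-N})_+<\tfrac{3}{2\varkappa}\cdot 2^{-2N}$) upgrades to $\int g<y_0\cdot 2^{-N}+\tfrac{3}{2\varkappa}\cdot 2^{-2N}$. Plugging in $d=1/(8C)$, $\varkappa=8C$, $y_0=\tfrac{1}{2C}\cdot 2^{-N}$ and $A_{N,k}\ge C^{-1}\cdot 2^{-2N}$, the estimates combine to
$$
\int f_{N,k}^i<\Bigl(\tfrac{8}{16C}+\tfrac{3}{16C}\Bigr)2^{-2N}+S_{N,k}-\tfrac{16}{16C}\cdot 2^{-2N}=S_{N,k}-\tfrac{5}{16C}\cdot 2^{-2N}<S_{N,k}.
$$
The whole argument ultimately rests on the numerical inequality $11<16$. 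The main obstacle is not the arithmetic but the bookkeeping: making sure the decomposition is sharp (no overcounting between $g$ and the bumps) and that each shifted stabilized function is correctly pinned inside its home interval by Claim~\ref{influence-of-stabilized2}. Once that is in place, the combinatorial identity $\sum_f S_{M,l}\le S_{N,k}-A_{N,k}$ is what makes room for the leftover $A_{N,k}\ge C^{-1}\cdot 2^{-2N}$ to dominate the base‑part contribution.
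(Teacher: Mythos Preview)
Your proposal is correct and follows essentially the same route as the paper: invoke Claim~\ref{overview-claim} to strip the maximum down to a finite family $\mathcal F$ with pairwise disjoint $I(M,l)$, bound the base part via Claim~\ref{g-integral}, and absorb the bump contributions using $\sum_{\mathcal F} S_{M,l}\le S_{N,k}-A_{N,k}$. The one genuine variation is in how you handle the bumps: the paper uses the cruder pointwise bound $f_{N,k}^i\le g+\max_j f_j+2d\cdot 2^{-N}$, picking up an extra $2d\cdot 2^{-2N}$, whereas your decomposition $\int f_{N,k}^i=\int g+\sum\int_{I(M,l)}(f_{N,k}^i-g)$ together with $g\ge y_0>2d(2^{-N}-2^{-M})$ bounds each bump directly by $S_{M,l}$ without that loss, giving a margin of $5/(16C)$ instead of the paper's $1/(16C)$ at the stated endpoint. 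Both arrive at the same conclusion; your bookkeeping is slightly cleaner but the underlying mechanism is identical.
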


\begin{proof}
We will use notations from Claim~\ref{overview-claim}. We have
$$
\mathcal F = \{f_{M_1,l_1}^{i-1}, f_{M_2, l_2}^{i-1}, \dots, f_{M_n, l_n}^{i-1}\},
$$ where $I(M_j,l_j) = [a_j; b_j]$ and
$$
(k-1)\cdot2^{-N} \le a_1 < b_1 \le a_2 < b_2 \le \dots \le a_n < b_n\le k\cdot2^{-N}.
$$
Also
\begin{equation}
\label{f-is-maximum}
f_{N,k}^i = \max\limits_{j = 1}^n\left(g, f_{M_j,l_j}^{i-1} + 2d\cdot\left(2^{-N}-2^{-M_j}\right)\right).
\end{equation}

Let $f_j,$ $j=1,\dots,n$, be such functions on the segment $I(N,k)$ that $f_j(x) = f_{M_j,l_j}^{i-1}(x)$ for $x\in I(M_j,l_j)$ and $f_j(x) = 0$ for the rest $x$. By Claim~\ref{influence-of-stabilized2}

$$
f_{N,k}^i = \max\limits_{j = 1}^n\left(g, f_j + 2d\cdot\left(2^{-N}-2^{-M_j}\right)\right).
$$

Since
$$
f_{N,k}^i \le g + \max\limits_{j = 1}^n\left(f_j + 2d\cdot\left(2^{-N}-2^{-M_j}\right)\right)
$$
and
$$\max\limits_{j = 1}^n\left(f_j + 2d\cdot\left(2^{-N}-2^{-M_j}\right)\right) < \max\limits_{j=1}^n\left(f_j\right) + 2d\cdot2^{-N}, 
$$
then
\begin{equation}
\label{integral-bound}
\int_{I(N,k)}f_{N,k}^i(x)dx < \int_{I(N,k)} g(x)dx + \int_{I(N,k)} \max\limits_{j=1}^n\left( f_j(x)\right) dx + 2d\cdot2^{-2N}.
\end{equation}

We estimate the first and the second summands on the right side of inequality~(\ref{integral-bound}).

Let the reference value equal $y_0 = 2d\cdot2^{-N} + \varepsilon$ and be less than $2^{-N}.$ By Definition~\ref{minimal-function}
$$
g \le 
\varepsilon + 
\max\left(2d\cdot2^{-N},  \max\limits_{l\cdot2^{-M}\in I(N,k)}\left(2^{-M} + 2d\cdot\left(2^{-N} - 2^{-M}\right) -h_{l\cdot2^{-M}}\right)\right).
$$

Then by Claim~\ref{g-integral}
$$
\int_{I(N,k)} g(x)dx
<
\varepsilon \cdot2^{-N} + 
2^{-2N}\cdot\left(2d + \frac3{2\varkappa}\right).
$$

Almost everywhere
$$
\max\limits_{j=1}^n\left(f_j\right)
=
\sum\limits_{j=1}^n f_j.
$$

By definition
$$
\int_{\mathbb R} f_j dx = \int\limits_{a_j}^{b_j} f_{M_j,l_j}^{i-1}(x) dx.
$$

By Definition~\ref{induction-step}
$$
\int\limits_{a_j}^{b_j} f_{M_j,l_j}^{i-1}(x) dx = S_{M_j, l_j}.
$$

We substitute the obtained equalities and inequalities into inequality~(\ref{integral-bound}) and we obtain
$$
\int_{I(N,k)}f_{N,k}^i(x)dx <\varepsilon\cdot2^{-N} + 2^{-2N}\cdot\left(2d + \frac3{2\varkappa}\right) + \sum\limits_{j=1}^n S_{M_j, l_j} + 2d\cdot2^{-2N}.
$$

By definition $S_{N,k}\ge A_{N,k} + \sum\limits_{j=1}^n S_{M_j, l_j}.$ By assumptions of Lemma~\ref{main-lemma} $A_{N,k}\ge C^{-1}\cdot2^{-2N}.$ Therefore
$$
\int_{I(N,k)}f_{N,k}^i(x)dx < S_{N,k} + 2^{-2N}\cdot\left(-C^{-1} + 4d + \frac3{2\varkappa}\right) + \varepsilon\cdot2^{-N}.
$$

Since $4d < C^{-1}/2$ and $3/2\varkappa < 3C^{-1}/16$, then
$$
\int_{I(N,k)}f_{N,k}^i(x)dx < S_{N,k} - 2^{-2N}\cdot 5C^{-1}/16 + \varepsilon\cdot 2^{-N}.
$$

If $\varepsilon \le 2^{-N}\cdot 5C^{-1}/16,$ then

$$
\int_{I(N,k)}f_{N,k}^i(x)dx < S_{N,k}.
$$
\end{proof}

\begin{clai}
\label{minimal-y0-2}
If the reference value can be decreased so that the area below the graph of a function $f_{N,k}^i$ does not change, this area is less than $S_{N,k}.$
\end{clai}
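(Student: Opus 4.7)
The plan is to reduce to Claim~\ref{minimal-y0} via monotonicity. By Claim~\ref{monotonicity-claim2}, the area $A(y_0) := \int_{I(N,k)} f_{N,k}^i(x)\,dx$ is continuous and non-decreasing in the reference value $y_0$, and by Claim~\ref{monotonicity-claim1} the pointwise function $f_{N,k}^i$ is non-decreasing in $y_0$ as well. In particular, if two distinct reference values give the same area, the pointwise function $f_{N,k}^i$ must coincide throughout the interval between them. Setting $y_0^* := \inf\{y>0 : A(y) = A(y_0)\}$, continuity gives $A(y_0^*) = A(y_0)$ and $A(y) < A(y_0^*)$ for every $y < y_0^*$, so it is enough to show $A(y_0^*) < S_{N,k}$.

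At $y_0 = y_0^*$, the strict-decrease property just below the infimum forces the base part $g_{N,k,y_0^*}$ to attain its plateau level $y_0^*$ and coincide with $f_{N,k}^i$ on a set of positive measure while every stabilized function $f_{M,l}^{i-1}+2d\cdot(2^{-N}-2^{-M})$ in the maximum~\eqref{maximum-from-definition} lies strictly below $y_0^*$ on that set. This is exactly the configuration behind Claim~\ref{minimal-y0}, and I would follow the same estimate: use Claim~\ref{overview-claim} to decompose $f_{N,k}^i$ into $g$ and a finite family of stabilized pieces $(f_{M_j,l_j}^{i-1})$ with pairwise disjoint $I(M_j,l_j)\subset I(N,k)$; apply the pointwise bound $\max(g, f_j + 2d(2^{-N}-2^{-M_j})) \le g + f_j + 2d\cdot 2^{-N}$; bound $\int g$ via Claim~\ref{g-integral}; and conclude using $\sum_j S_{M_j,l_j} \le S_{N,k} - A_{N,k}$ together with $A_{N,k} \ge C^{-1}\cdot 2^{-2N}$ from the hypotheses of Lemma~\ref{main-lemma}.

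The main obstacle, which I expect to require the most care, is that Claim~\ref{minimal-y0} is stated only for $y_0 \le (2d + C^{-1}/4)\cdot 2^{-N}$, whereas a priori $y_0^*$ need not satisfy this bound; the generic estimate for $\int g$ would then be too weak to give $A(y_0^*) < S_{N,k}$. I would overcome this by extracting additional information from the strict-decrease condition at $y_0^*$: the plateau of $g_{N,k,y_0^*}$ that actively contributes to $f_{N,k}^i$ must both have positive measure and be unreached by any stabilized function in~\eqref{maximum-from-definition}, while simultaneously the $\varkappa$-Lipschitz structure of $f_{N,k}^i$ from Claim~\ref{derivative-claim} together with the tent bound of Claim~\ref{maximal-y0-claim} limits the measure of this plateau in terms of $y_0^*$. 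A careful quantitative balancing between the plateau measure and the excess $(y_0^* - 2d\cdot 2^{-N})\cdot 2^{-N}$ in the bound for $\int g$ then absorbs that excess into the slack $2^{-2N}\cdot 5C^{-1}/16$ already present in the proof of Claim~\ref{minimal-y0}, yielding $A(y_0^*) < S_{N,k}$ in all cases.
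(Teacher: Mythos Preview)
Your opening reduction is correct and matches the paper: by Claims~\ref{monotonicity-claim1} and~\ref{monotonicity-claim2}, if two reference values give the same area then the pointwise functions coincide on the whole interval between them. The gap is in what you do next.

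You introduce $y_0^* = \inf\{y : A(y) = A(y_0)\}$, worry that $y_0^*$ might exceed the threshold $(2d + C^{-1}/4)\cdot 2^{-N}$ of Claim~\ref{minimal-y0}, and then propose an undetailed ``quantitative balancing'' to absorb the excess. This balancing is never actually carried out, and more importantly it is unnecessary. In the regime $y_0 \le 2^{-N}$ the base part has the form $g_{N,k,y_0} = \max(y_0, G)$ with $G$ independent of $y_0$, so $f_{N,k}^i = \max(y_0, J)$ for some $J$ not depending on $y_0$. If this coincides with $\max(y_0', J)$ for some $y_0' < y_0$, then necessarily $J \ge y_0$ pointwise on $I(N,k)$, and hence $\max(y'', J) = J$ for \emph{every} $y'' \le y_0$. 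In other words, once the function is flat on any subinterval of reference values it is flat all the way down to the left endpoint $2d\cdot 2^{-N}$ of the parameter range; your $y_0^*$ is automatically that endpoint and Claim~\ref{minimal-y0} applies without modification.

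This is exactly the paper's argument. It first checks that the hypothesis forces $y_0 \le 2^{-N}$ (so that $g$ is given by the maximum formula rather than the minimum formula in Definition~\ref{minimal-function}), and then makes the observation above in one line: ``the function will not change if we further decrease the reference value.'' You should replace the speculative last paragraph with this structural remark.
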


\begin{proof}
We note that the reference value of the function $f_{N,k}^i$ is not greater than $2^{-N}.$ Suppose the contrary. For $x_0 = \left(k-\frac12\right)\cdot2^{-N}$ we have $g(x_0)>2^{-N}$, hence by equality~(\ref{f-is-maximum}) $f_{N,k}^i(x_0)>2^{-N}.$ But $f_{M_j,l_j}^{i-1}(x_0) + 2d\cdot(2^{-N}-2^{-M_j}) < 2^{-N}$ for any $j=1,\dots,n.$ This means that the reference value can be decreased so that the integral decreases, a contradiction.

If the reference value can be decreased so that the area below the graph of the function $f_{N,k}^i$ does not change, by Claim~\ref{monotonicity-claim2} the function $f_{N,k}^i$ itself does not change. 
This means that the function will not change if we further decrease the reference value. Hence we can assume that the reference value is as close as we want to $2d\cdot2^{-N}$.
It remains to use Claim~\ref{minimal-y0}.
\end{proof}

From Claims~\ref{maximal-y0-integral},~\ref{monotonicity-claim2},~\ref{minimal-y0} and~\ref{minimal-y0-2} it follows that in Definition~\ref{induction-step} there exists the unique such reference value $y_0$ that
$$
\int_{I(N,k)}f_{N,k}^i(x)dx = S_{N,k}.
$$

So for now we correctly defined all functions $f_{N,k}^i.$ We further check correctness of Definition~\ref{induction-step}. We show that one of these function satisfies all conditions for stabilization.

\begin{clai}
\label{thin-consequence}
Let $N_1 < N_2,$ $x_0\in I(N_1,k_1)\cap I(N_2,k_2)$ and
\begin{equation}
\label{thin-inequality}
f_{N_1,k_1}^i(x_0) - f_{N_2, k_2}^i(x_0) < 2d\cdot\left(2^{-N_1} - 2^{-N_2}\right).
\end{equation}

Then the functions $f_{N_1,k_1}^i$ and $f_{N_2, k_2}^i$ are not stabilized, the function $f_{N_2, k_2}^i$ equals its base part at the point $x_0$ and
$$
y_1 - y_2 < 2d\cdot\left(2^{-N_1} - 2^{-N_2}\right),
$$
where $y_1$ and $y_2$ are reference values of $f_{N_1,k_1}^i$ and $f_{N_2, k_2}^i.$
\end{clai}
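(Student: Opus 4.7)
The plan is to first observe that $I(N_2,k_2)\subset I(N_1,k_1)$ strictly, with $x_0$ in the interior of $I(N_1,k_1)$. Dyadic intervals with $N_1<N_2$ either nest or share at most one endpoint; at a shared endpoint Claim~\ref{boundary-values} forces $f_{N_1,k_1}^i(x_0)-f_{N_2,k_2}^i(x_0)=2^{-N_1}-2^{-N_2}$, which exceeds $2d\cdot(2^{-N_1}-2^{-N_2})$ since $2d<1$, contradicting~(\ref{thin-inequality}). I would then prove the four conclusions in the logical order: first that $f_{N_1,k_1}^i$ is not stabilized, second that $f_{N_2,k_2}^i(x_0)=g_2(x_0)$, third that $f_{N_2,k_2}^i$ is not stabilized, and finally the bound on reference values, since each step feeds into the next.

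To show $f_{N_1,k_1}^i$ is not stabilized, I would argue by contradiction, splitting on the stabilization time $T(N_1,k_1)$. If $T(N_1,k_1)=i$, the stabilization criterion in Definition~\ref{induction-step} at $(N,k)=(N_2,k_2)$ and $x=x_0$ directly negates~(\ref{thin-inequality}). If $T(N_1,k_1)<i$, Claim~\ref{domination-of-stabilized} at step $i-1$ yields $f_{N_1,k_1}^{i-1}(x_0)-f_{N_2,k_2}^{i-1}(x_0)\ge 2d(2^{-N_1}-2^{-N_2})$, and I would lift this to step $i$ by the same monotonicity trick used in the proof of Claim~\ref{domination-of-stabilized}: by Claim~\ref{monotonicity-claim2} the reference value $y_2$ cannot increase from step $i-1$ to step $i$, so $g_2$ does not increase pointwise, and any point where $f_{N_2,k_2}^i$ strictly exceeds $f_{N_2,k_2}^{i-1}$ must be one where~(\ref{maximum-from-definition}) is realized by a term $f_{M^*,l^*}^{i-1}+2d(2^{-N_2}-2^{-M^*})$ coming from a function newly stabilized at step $i-1$; Claim~\ref{influence-of-stabilized2} then forces $I(M^*,l^*)\subset I(N_2,k_2)\subset I(N_1,k_1)$, so Claim~\ref{domination-of-stabilized2} bounds this term above by $f_{N_1,k_1}^i(x_0)-2d(2^{-N_1}-2^{-N_2})$, a contradiction. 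With this in hand, Claim~\ref{overview-claim} applied to $f_{N_2,k_2}^i$ forces the base part $g_2$ to realize the max at $x_0$: otherwise some stabilized $f_{M,l}^{i-1}$ with $M>N_2>N_1$ realizes it, but then $f_{M,l}^{i-1}$ also appears in the max~(\ref{maximum-from-definition}) for the non-stabilized $f_{N_1,k_1}^i$, and combining this with the assumed equality yields $f_{N_1,k_1}^i(x_0)-f_{N_2,k_2}^i(x_0)\ge 2d(2^{-N_1}-2^{-N_2})$, contradicting~(\ref{thin-inequality}).

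Non-stabilization of $f_{N_2,k_2}^i$ in the case $T(N_2,k_2)\le i-1$ is immediate: the stabilized $f_{N_2,k_2}^{i-1}$ appears in~(\ref{maximum-from-definition}) for $f_{N_1,k_1}^i$, giving the reverse of~(\ref{thin-inequality}). The case $T(N_2,k_2)=i$, in which $f_{N_2,k_2}$ has just been chosen for stabilization, is the main obstacle; I plan to exhibit a violation of the selection criterion by descending to the child $(N_3,k_3)$ of $(N_2,k_2)$ containing $x_0$: since $f_{N_2,k_2}^i(x_0)=g_2(x_0)$ by the previous step, the hypothesis~(\ref{thin-inequality}) forces $g_2$ and its reference value $y_2$ to be small enough that the required inequality $f_{N_2,k_2}^i-f_{N_3,k_3}^i\ge d\cdot 2^{-N_2}$ must fail at some point of $I(N_3,k_3)$. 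Finally, the bound $y_1-y_2<2d(2^{-N_1}-2^{-N_2})$ follows from $f_{N_2,k_2}^i(x_0)=g_2(x_0)$ combined with $f_{N_1,k_1}^i(x_0)\ge g_1(x_0)$: unwinding Definition~\ref{minimal-function} to express each base-part value in terms of its reference value, and using the strict nesting $I(N_2,k_2)\subset I(N_1,k_1)$ to match up the wedges of $g_1$ and $g_2$ near $x_0$, the inequality $f_{N_1,k_1}^i(x_0)-g_2(x_0)<2d(2^{-N_1}-2^{-N_2})$ passes to $y_1-y_2<2d(2^{-N_1}-2^{-N_2})$. The necessary case split by whether each $y$ exceeds or falls below the corresponding $2^{-N}$ is the other place I expect the bookkeeping to be delicate.
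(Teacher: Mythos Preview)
Your overall plan---the nesting argument, the use of the max formula \eqref{maximum-from-definition} to propagate lower bounds, and the four-case analysis on reference values---matches the paper's proof. The chief difference is that you treat the cases $T(N_1,k_1)=i$ and $T(N_2,k_2)=i$ as genuine and identify the latter as ``the main obstacle.'' These cases do not arise: Claim~\ref{thin-consequence} is stated and applied at the point in the construction where all functions $f_{N,k}^i$ have been defined but no function has yet been selected for stabilization at step $i$ (it is used precisely in Claim~\ref{existence-of-stabilizable} to show such a selection is possible). In this context ``stabilized'' means $T\le i-1$, so the conclusion ``not stabilized'' only asserts $T\ge i$; whether either function is subsequently chosen at step $i$ is irrelevant. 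Your proposed descent-to-a-child argument for the phantom case $T(N_2,k_2)=i$ is therefore unneeded, and as sketched it is not obviously correct either: the hypothesis \eqref{thin-inequality} constrains $f_{N_1,k_1}^i-f_{N_2,k_2}^i$, and nothing you wrote forces $f_{N_2,k_2}^i-f_{N_3,k_3}^i$ to be small.

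One smaller point of ordering: you place the proof of $f_{N_2,k_2}^i(x_0)=g_2(x_0)$ before non-stabilization of $f_{N_2,k_2}^i$, but your argument for the former (via Claim~\ref{overview-claim} or the max formula) already presupposes that $f_{N_2,k_2}^{i-1}$ is not stabilized. The paper's order avoids this: once $f_{N_1,k_1}^i$ is known not to be stabilized, a stabilized $f_{N_2,k_2}^{i-1}$ would appear as a term in \eqref{maximum-from-definition} for $f_{N_1,k_1}^i$ and immediately contradict \eqref{thin-inequality}; only then is the base-part equality proved. Your explicit lifting of Claim~\ref{domination-of-stabilized} from step $i-1$ to step $i$ is correct and makes precise what the paper's one-sentence invocation leaves implicit.
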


\begin{proof}
If the interiors of the segments $I(N_1,k_1)$ and $I(N_2,k_2)$ do not intersect, $x_0$ is their common endpoint. Then
$$
f_{N_1,k_1}^i(x_0) - f_{N_2, k_2}^i(x_0) = 2^{-N_1} - 2^{-N_2} > 2d\cdot\left(2^{-N_1} - 2^{-N_2}\right),
$$
since $d<1/2,$ a contradiction.

Hence $I(N_1,k_1)\supset I(N_2,k_2).$ By Claim~\ref{domination-of-stabilized} and by inequality~(\ref{thin-inequality}) the function $f_{N_1,k_1}^i$ is not stabilized yet. Let the function $f_{N_2,k_2}^i$ be stabilized. Since the function $f_{N_1,k_1}^i$ was defined as the maximum in~(\ref{maximum-from-definition}) with an expression $f_{N_2,k_2}^i + 2d\cdot\left(2^{-N_1} - 2^{-N_2}\right)$ in it, we have a contradiction with inequality~(\ref{thin-inequality}).

Hence both functions $f_{N_1,k_1}^i$ and $f_{N_2, k_2}^i$ are not stabilized yet. We denote their base parts by $g_1$ and $g_2$ respectively. Let $f_{N_2, k_2}^i (x_0) > g_2(x_0).$ Then there exists $M>N_2$ and a stabilized function $f_{M,l}^{i-1}$ such that
$$
 f_{N_2, k_2}^i (x_0) = f_{M,l}^{i-1}(x_0) + 2d\cdot\left( 2^{-N_2} - 2^{-M} \right).
$$

Then
$$
f_{N_1, k_1}^i(x_0) - f_{M,l}^{i-1}(x_0) = f_{N_1, k_1}^i(x_0) - f_{N_2, k_2}^i(x_0) + 2d\cdot\left( 2^{-N_2} - 2^{-M} \right) < 2d\cdot\left(2^{-N_1} - 2^{-M}\right)
$$
by inequality~(\ref{thin-inequality}). This is impossible because the function $f_{N_1,k_1}^i$ was defined as the maximum in~(\ref{maximum-from-definition}) with an expression $f_{M,l}^{i-1} + 2d\cdot\left(2^{-N_1} - 2^{-M}\right)$ in it. So we have a contradiction, hence $f_{N_2, k_2}^i (x_0) = g_2(x_0).$

Since $f_{N_1,k_1}^i\ge g_1,$ by inequality~(\ref{thin-inequality})
\begin{equation}
\label{eq1}
g_1(x_0) - g_2(x_0) < 2d\cdot\left(2^{-N_1}-2^{-N_2}\right).
\end{equation}
We consider four cases.

\smallskip
\noindent{\bf Case $y_1 \ge 2^{-N_1}$ and $y_2 \ge 2^{-N_2}$.}
We recall that in this case the functions $g_1$ and $g_2$ have the form as in Figure~\ref{aux-functions} on the left. Since $N_1 < N_2$, $d<1/2$ and functions have same slope, inequality~(\ref{eq1}) can be satisfied only if $g_1(x_0) = y_1.$ Since $g_2(x_0)\le y_2,$ by inequality~(\ref{eq1}) we have
$$
y_1 - y_2 < 2d\cdot\left(2^{-N_1} - 2^{-N_2}\right).
$$

\smallskip
\noindent{\bf Case $y_1<2^{-N_1}$ and $y_2 \ge 2^{-N_2}$.}
Then $g_1(x_0) \ge y_1$ and $g_2(x_0) \le y_2.$ Therefore from inequality~(\ref{eq1})

$$
y_1 - y_2 \le g_1(x_0) - g_2(x_0) < 2d\cdot\left(2^{-N_1}-2^{-N_2}\right).
$$

\smallskip
\noindent{\bf Case $y_1<2^{-N_1}$ and $y_2<2^{-N_2}$.}

Then by definition for $n=1,2$
\begin{multline*}
g_n = \max\left( y_n, \max\limits_{l\cdot2^{-M}\in I(N_n,k_n)}\left( 2^{-M} + 2d\cdot\left(2^{-N_n} - 2^{-M}\right) - h_{l\cdot2^{-M}} \right) \right) = \\
= 2d\cdot2^{-N_n} + \max\left( y_1 - 2d\cdot2^{-N_n}, \max\limits_{l\cdot2^{-M}\in I(N_n,k_n)}\left( 2^{-M}\cdot(1-2d) - h_{l\cdot2^{-M}} \right) \right)
\end{multline*}
and
\begin{multline*}
g_1 - g_2 = 2d\cdot\left(2^{-N_1}-2^{-N_2}\right) + \max\left( y_1 - 2d\cdot2^{-N_1}, \max\limits_{l\cdot2^{-M}\in I(N_1,k_1)}\left( 2^{-M}\cdot(1-2d) - h_{l\cdot2^{-M}} \right) \right) - \\
- \max\left( y_2 - 2d\cdot2^{-N_2}, \max\limits_{l\cdot2^{-M}\in I(N_2,k_2)}\left( 2^{-M}\cdot(1-2d) - h_{l\cdot2^{-M}} \right) \right).
\end{multline*}

Since
$$
\max\limits_{l\cdot2^{-M}\in I(N_1,k_1)}\left( 2^{-M}\cdot(1-2d) - h_{l\cdot2^{-M}} \right) \ge
\max\limits_{l\cdot2^{-M}\in I(N_2,k_2)}\left( 2^{-M}\cdot(1-2d) - h_{l\cdot2^{-M}} \right)
$$
(the maximum on the left is taken over the bigger set of functions), then by inequality~(\ref{eq1})
$$
\max\left( y_2 - 2d\cdot2^{-N_2}, \max\limits_{l\cdot2^{-M}\in I(N_2,k_2)}\left( 2^{-M}\cdot(1-2d) - h_{l\cdot2^{-M}}(x_0) \right) \right) = y_2 - 2d\cdot2^{-N_2}.
$$

Therefore $g_2(x_0) = y_2.$ Since $g_1(x_0)\ge y_1,$
$$
y_1 - y_2 \le g_1(x_0) - g_2(x_0) < 2d\cdot\left(2^{-N_1}-2^{-N_2}\right).
$$

\smallskip
\noindent{\bf Case $y_1 \ge 2^{-N_1}$ and $y_2 < 2^{-N_2}$.} Then $g_1 \ge 2^{-N_1}$ and $g_2 \le 2^{-N_2},$ which contradicts inequality~(\ref{eq1}), since $N_1 < N_2$ and $d<1/2.$
\end{proof}

\begin{clai}
\label{existence-of-stabilizable}
Let $y_0$ be a reference value of a not stabilized function $f_{N_0,k_0}^i.$ Then there exists such $M$ that
$$
M \le \max\left(N_0, \log_2\frac{\varkappa/2 + 1}{y_0 - 2d\cdot2^{-N_0}}\right)
$$
and there exists a function $f_{M,l}^i$ which satisfies all conditions for stabilization.
\end{clai}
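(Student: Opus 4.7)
The plan is to build, starting from $f_{N_0,k_0}^i$, a chain of not-stabilized functions with strictly increasing $N$-indices whose reference values are tightly controlled. If $f_{N_j,k_j}^i$ fails the stabilization conditions of Definition~\ref{induction-step}, by that definition there exist $N_{j+1}>N_j$, admissible $k_{j+1}$, and a point $x\in I(N_j,k_j)\cap I(N_{j+1},k_{j+1})$ witnessing the failure, i.e.\ $f_{N_j,k_j}^i(x)-f_{N_{j+1},k_{j+1}}^i(x)<2d\cdot(2^{-N_j}-2^{-N_{j+1}})$. Applying Claim~\ref{thin-consequence} to each such pair, the new function $f_{N_{j+1},k_{j+1}}^i$ is also not stabilized, and the reference values obey $y_j-y_{j+1}<2d\cdot(2^{-N_j}-2^{-N_{j+1}})$. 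I iterate until the chain reaches some $f_{N_J,k_J}^i$ that does satisfy every stabilization condition; this is the candidate, with $M=N_J$ and $l=k_J$.

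To bound $N_J$, I would combine two estimates on the reference values. Telescoping the inequalities along the chain gives
\[
y_0-y_J<2d\cdot(2^{-N_0}-2^{-N_J})<2d\cdot 2^{-N_0},
\]
hence $y_J>y_0-2d\cdot 2^{-N_0}$. For the matching upper bound, I claim that the reference value of any not-stabilized function $f_{N,k}^i$ is strictly less than $(\varkappa/2+1)\cdot 2^{-N}$: at that threshold Claim~\ref{maximal-y0-claim} collapses the function to its triangular base part, Claim~\ref{maximal-y0-integral} shows the area already strictly exceeds $S_{N,k}$, and then monotonicity from Claim~\ref{monotonicity-claim2} combined with the defining equality $\int f_{N,k}^i=S_{N,k}$ forces the reference value to lie strictly below the threshold. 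Applied to $f_{N_J,k_J}^i$ this yields
\[
2^{-N_J}>\frac{y_0-2d\cdot 2^{-N_0}}{\varkappa/2+1}, \qquad N_J<\log_2\frac{\varkappa/2+1}{y_0-2d\cdot 2^{-N_0}}.
\]

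Since the $N_j$ are strictly increasing integers bounded above by a quantity depending only on $y_0$ and $N_0$, the chain must terminate after finitely many steps, producing the desired $(M,l)=(N_J,k_J)$ with $M\le\max\bigl(N_0,\log_2\tfrac{\varkappa/2+1}{y_0-2d\cdot 2^{-N_0}}\bigr)$; the outer maximum with $N_0$ covers the trivial case $J=0$, in which $f_{N_0,k_0}^i$ is itself already stabilizable. The main technical point is verifying the strict upper bound $y_j<(\varkappa/2+1)\cdot 2^{-N_j}$ on the reference values along the chain, because Claims~\ref{maximal-y0-claim} and~\ref{maximal-y0-integral} are stated for the pure base part, whereas a general $f_{N_j,k_j}^i$ may include additional contributions from stabilized functions inside the maximum~(\ref{maximum-from-definition}); however, these contributions only increase the integral over $I(N_j,k_j)$, so the comparison with $S_{N_j,k_j}$ remains valid and the argument goes through.
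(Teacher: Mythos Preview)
Your proof is correct and follows essentially the same approach as the paper: build a chain of not-stabilized functions via Claim~\ref{thin-consequence}, telescope the reference-value inequalities to get $y_j>y_0-2d\cdot 2^{-N_0}$, and combine this with the upper bound $y_j<(\varkappa/2+1)\cdot 2^{-N_j}$ coming from Claims~\ref{maximal-y0-claim} and~\ref{maximal-y0-integral} to bound $N_j$. One minor remark: your concern that Claims~\ref{maximal-y0-claim} and~\ref{maximal-y0-integral} are ``stated for the pure base part'' is unnecessary, since Claim~\ref{maximal-y0-claim} already proves that at $y_0=(\varkappa/2+1)\cdot 2^{-N}$ the full function $f_{N,k}^i$ coincides with its base part $g$; but your workaround is also valid.
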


\begin{proof}
Let
$$
y_0 = 2d\cdot2^{-N_0} + \varepsilon.
$$ 
By Definition~\ref{induction-step} $\varepsilon>0.$

If the function $f_{N_0,k_0}^i$ satisfies all conditions for stabilization there is nothing to prove. Suppose the contrary. Then there exists $N_1>N_0$ and a function $f_{N_1,k_1}^i$ defined at some point $x_0\in I(N_0, k_0)$ such that $f_{N_0,k_0}^i(x_0) - f_{N_1,k_1}^i(x_0) < 2d\cdot\left(2^{-N_0} - 2^{-N_1}\right).$ By Claim~\ref{thin-consequence} the function $f_{N_1,k_1}^i$ is not stabilized yet. Let $y_1$ be its reference value. Then by Claim~\ref{thin-consequence}
$$
y_0-y_1 < 2d\cdot\left(2^{-N_0} - 2^{-N_1}\right).
$$

By similar argument, we can construct a sequence $N_0 < N_1 < \dots < N_m$ and a sequence of not stabilized functions $f_{N_j,k_j}^i$ such that
$$
y_{j-1} - y_j < 2d \cdot\left(2^{-N_{j-1}} - 2^{-N_j}\right).
$$

We sum up all inequalities and we get
$$
y_0 - y_m < 2d\cdot\left(2^{-N_{0}} - 2^{-N_m}\right).
$$

Since $y_0 = 2d\cdot2^{-N_0} + \varepsilon,$ 
$$
y_m > \varepsilon + 2d\cdot 2^{-N_m} = \left(\varepsilon\cdot 2^{N_m} + 2d\right)\cdot2^{-N_m}.
$$

If $\varepsilon\cdot2^{N_m} > \varkappa/2 + 1,$ we have a contradiction with Claims~\ref{maximal-y0-claim} and~\ref{maximal-y0-integral}.
\end{proof}

By Claim~\ref{existence-of-stabilizable} on each step $i$ we can choose a function to stabilize. To show correctness of Definition~\ref{induction-step} it remains to prove that the mapping $T$ is defined on all admissible pairs.

\begin{clai}
\label{T-is-bijection}
Any function $f_{N,k}^i$ stabilizes at some moment.
\end{clai}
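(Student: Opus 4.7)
The plan is to argue by contradiction. Suppose the set $\mathcal{S}$ of admissible pairs $(N,k)$ for which $T(N,k)$ is never defined is nonempty, and let $(N^*,k^*)$ be its lexicographically smallest element. All admissible pairs lex-smaller than $(N^*,k^*)$ do stabilize; since there are only finitely many of them, there exists a step $i_0$ after which every such pair is stabilized.

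The key quantitative input is a uniform lower bound on the reference value of $f_{N^*,k^*}^i$. By Claims~\ref{monotonicity-claim2},~\ref{minimal-y0} and~\ref{minimal-y0-2}, this reference value $y_0(N^*,k^*,i)$ is uniquely determined by the equation $\int_{I(N^*,k^*)} f_{N^*,k^*}^i\,dx = S_{N^*,k^*}$, and Claim~\ref{minimal-y0} forces
$$y_0(N^*,k^*,i) - 2d\cdot 2^{-N^*} \ge \tfrac{C^{-1}}{4}\cdot 2^{-N^*}$$
for every $i$, independently of how many other functions have stabilized.

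Now I apply Claim~\ref{existence-of-stabilizable} to the non-stabilized function $f_{N^*,k^*}^i$: at every such step $i$, there exists a function $f_{M,l}^i$ satisfying all conditions for stabilization with
$$M \le \max\!\left(N^*,\ \log_2\frac{\varkappa/2+1}{y_0-2d\cdot 2^{-N^*}}\right) \le \max\!\left(N^*,\ N^* + \log_2\!\big(4C(\varkappa/2+1)\big)\right) =: M^*.$$
For $i>i_0$, any stabilizable candidate must be non-stabilized, so its index $(M,l)$ is lex-at-least $(N^*,k^*)$; in particular $M\ge N^*$. Combining both bounds, the candidate has $N^*\le M\le M^*$, so the stabilization rule (minimum $M$, then minimum $l$) picks some pair with $M\in\{N^*,\dots,M^*\}$, of which there are only finitely many (at most $2^{M^*+1}-2^{N^*}$).

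Since each pair is picked at most once and at every step after $i_0$ one such pair is consumed, the supply is exhausted in finitely many steps. In particular $(N^*,k^*)$ itself must be picked, contradicting $(N^*,k^*)\in\mathcal{S}$. The main subtlety is the uniform lower bound on the reference value; once that is in hand, the rest is a finite pigeonhole argument on the index range $[N^*,M^*]$.
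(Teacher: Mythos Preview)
Your proof is correct and follows essentially the same approach as the paper's: apply Claim~\ref{minimal-y0} to the lexicographically smallest non-stabilized pair to get a uniform lower bound on its reference value, feed this into Claim~\ref{existence-of-stabilizable} to confine the stabilization candidates to a bounded index range, and then use finiteness of that range. Your explicit contradiction setup and pigeonhole count make the final step (``since the number of functions $f_{M,l}^i$ with such $M$ is finite, the function $f_{N,k}^i$ stabilizes at some moment'') more transparent than the paper's one-line conclusion, but the argument is the same.
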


\begin{proof}
We fix some non-negative integer $i$.

We choose a not stabilized function $f_{N,k}^i$ with minimal $N$, and with minimal $k$ among such functions. By Claim~\ref{minimal-y0} its reference value is greater than $\left(2d + C^{-1}/4\right)\cdot 2^{-N}.$ Then by Claim~\ref{existence-of-stabilizable} there exists such $M$ that
$$
M \le N + \log_2\left(4C\cdot(\varkappa/2 + 1)\right),
$$
and there exists a function $f_{M,l}^i$ which satisfies all conditions for stabilization.

Since the number of functions $f_{M,l}^i$ with such $M$ is finite, the function $f_{N,k}^i$ stabilizes at some moment.
\end{proof}

\begin{proof}[Proof of Lemma~\ref{main-lemma}]
Let $f_{N,k}^i$ be functions from Definition~\ref{induction-step}.

For each admissible pair $(N,k)$ we define a function on a segment $I(N,k)$

$$
f_{N,k} = \lim\limits_{i\to+\infty} f_{N,k}^i.
$$

The limit exists because $f_{N,k}^i = f_{N,k}^{i+1}$ if $i\ge T(N,k).$

The conditions of the lemma are satisfied for functions $f_{N,k}$ because similar conditions are satisfied for functions $f_{N,k}^i$: conditions 1) and 5) --- by definition, condition 2) --- by Claim~\ref{derivative-claim}, condition 3) --- by Claim~\ref{boundary-values}, condition 4) --- by Claim~\ref{domination-of-stabilized2}.
\end{proof}

\section{Standard tubular neighborhoods of Legedrian Lavrentiev knots}
\label{contact-section}

We recall that a rectifiable curve is called {\it Lavrentiev} if its arc-length parametrization is biLipschitz. A Lavrentiev link is called {\it Legendrian} if the integral of the contact form on any its subarc is zero.

To work with piecewise smooth links, spatial graphs, triangulations and 2-complexes obtained by gluing embedded surfaces it is convenient to have non-smooth homeomorphisms. Contact topology is not an exception. Here we develop an important tool called the standard tubular neighborhood for Legendrian Lavrentiev links. See~\cite{Pra, Pra2} for related results.

A (locally) biLipschitz homeomorphism is called {\it contact} if it preserves the set of Legendrian Lavrentiev links.

\begin{proof}[Proof of Corollary~\ref{contact-corollary}]
By \cite[Proposition 5.26]{Pra} there exists a neighborhood $U$ of $L = \iota(\mathbb S^1),$ a smoothly embedded surface $S$ in $U$ and a contact diffeomorphism $\Psi:S\times\mathbb R\to U$ such that
\begin{itemize}
\item
the projection $\mathrm{pr}:U\to S$ is injective on $L$;
\item
the contact structure in $U$ is $\ker(dz + \mathrm{pr}^*\beta),$ where $\widetilde z:U\to\mathbb R$ is the projection and $\beta$ is a 1-form on $S$.
\end{itemize}

We can assume that $S$ is an open annulus equipped with a smooth coordinate system $(x, y)$ on it such that $x=0$ is a core of the annulus and the annulus is given by $x_0< x<x_1,$ $y\in\mathbb R/\mathbb Z.$

Since $dz + \mathrm{pr}^*\beta$ is a contact form, $d\beta$ is an area form on $S.$  We can assume that $(x,y)$ is a positive coordinate system. The following lemma is standard and similar to Darboux theorem.

\begin{lemm}
Let $\beta = \beta_x dx + \beta_y dy$ and $d\beta = fdx\wedge dy,$ where $\beta_x, \beta_y,$ and $f$ are smooth functions.
Then there exists a function $g$ on the annulus such that the following is a new coordinate system
$$\widetilde x\left( x\right) = \int_0^1 \beta_y (x,t) dt, \quad
\widetilde y( x, y) = \int_0^{y} f(x, t) dt / \int_0^1 \frac{\partial \beta_y}{\partial x} (x,t) dt,\quad
\widetilde z(x,y,z) = z - g
$$
and
$$
dz + \mathrm{pr}^*\beta = d\widetilde z + \widetilde xd\widetilde y.
$$
\end{lemm}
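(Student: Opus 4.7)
The plan is to rewrite the target identity $dz + \mathrm{pr}^{*}\beta = d\widetilde z + \widetilde x\,d\widetilde y$ as $dg = \widetilde x\,d\widetilde y - \beta$ on $S$, so that setting $\widetilde z = z - g$ gives the desired equality by pullback. The main task thus reduces to constructing a smooth primitive $g$ on $S$ of the $1$-form $\omega := \widetilde x\,d\widetilde y - \beta$, and then verifying that $(\widetilde x, \widetilde y, \widetilde z)$ form a bona fide coordinate system.

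Write $A(x) = \int_0^1 \frac{\partial \beta_y}{\partial x}(x,t)\,dt$, so that $d\widetilde x = A(x)\,dx$ and $\partial\widetilde y/\partial y = f(x,y)/A(x)$. A direct computation gives $d\widetilde x \wedge d\widetilde y = A(x)\cdot(f(x,y)/A(x))\,dx\wedge dy = f\,dx\wedge dy = d\beta$, hence $d\omega = 0$. Since $S$ is an open annulus, $H^{1}(S;\mathbb R)\cong\mathbb R$, generated by the loop $\gamma_c\colon y\mapsto (c,y)$ for any fixed $c$. Along $\gamma_c$ one has $dx = 0$, so $\int_{\gamma_c}\beta = \int_0^1\beta_y(c,t)\,dt = \widetilde x(c)$, while $\int_{\gamma_c}\widetilde x\,d\widetilde y = \widetilde x(c)\cdot(\widetilde y(c,1) - \widetilde y(c,0)) = \widetilde x(c)\cdot A(c)^{-1}\int_0^1 f(c,t)\,dt$. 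The crucial identity $\int_0^1 f(c,t)\,dt = A(c)$ follows by writing $f = \partial_x\beta_y - \partial_y\beta_x$ (extracted from $d\beta = f\,dx\wedge dy$) and noting that $\int_0^1 \partial_y\beta_x(c,t)\,dt = 0$ because $\beta_x$ is periodic in $y$. Substituting gives $\int_{\gamma_c}\omega = \widetilde x(c) - \widetilde x(c) = 0$, so $\omega$ is exact on $S$ and one may define $g(p) = \int_{p_0}^{p}\omega$ for any basepoint $p_0\in S$; the result is smooth and single-valued.

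For the coordinate property, the Jacobian of the map $(x,y,z)\mapsto(\widetilde x(x), \widetilde y(x,y), z - g(x,y))$ is block-triangular with diagonal entries $A(x)$, $f(x,y)/A(x)$, $1$, hence equals $f(x,y)$, which is nonvanishing since $d\beta$ is assumed to be an area form; the same fact, via the identity $A(x) = \int_0^1 f(x,t)\,dt$, shows $A(x)$ is nowhere zero so $\widetilde y$ is defined everywhere. Finally, the identity $\int_0^1 f(x,t)\,dt = A(x)$ yields $\widetilde y(x, y+1) - \widetilde y(x,y) = 1$, so $\widetilde y$ descends to a well-defined coordinate on $\mathbb R/\mathbb Z$ after quotienting, compatible with the annular structure of $S$.

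The main obstacle is the period computation. Both the specific normalization $\widetilde x(x) = \int_0^1\beta_y(x,t)\,dt$ and the division by $A(x)$ in the definition of $\widetilde y$ are dictated exactly by the requirement that the single period of $\omega$ on the annulus vanish; without this matching $\omega$ would still be closed but typically carry a nontrivial period, obstructing the existence of a single-valued primitive $g$ on all of $S$.
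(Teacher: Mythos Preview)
Your proof is correct and follows essentially the same route as the paper: reduce to showing $\omega = \widetilde x\,d\widetilde y - \beta$ is exact on the annulus, check $d\omega = 0$ via $d\widetilde x\wedge d\widetilde y = d\beta$, and kill the single period using $\int_0^1 f(x,t)\,dt = A(x)$ from periodicity of $\beta_x$. You are somewhat more explicit than the paper about the Jacobian computation and the role of the cohomology class, but the argument is the same.
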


\begin{proof}
It is clear that $\widetilde y(x, 0) = 0.$ Also 
$$
\widetilde y(x, 1) = \int_0^1 \left(\frac{\partial \beta_y}{\partial x} - \frac{\partial \beta_x}{\partial y}\right)(x,t) dt / \int_0^1 \frac{\partial \beta_y}{\partial x} (x,t) dt = 1$$
for any $x$ because 
$$
\int_0^1 \frac{\partial \beta_x}{\partial y}(x,t) dt = \beta_x(x,1) - \beta_x(x,0) = 0.
$$
We also see here that $\frac{\partial\widetilde x}{\partial x} = \int_0^1 f(x,t)dt>0.$

Also
$
d\beta = d \widetilde x\wedge d\widetilde y.
$
Thus 
$$
d(\widetilde xd\widetilde y - \beta) = 0.
$$

The integral of $\widetilde xd\widetilde y - \beta$ on any circle $x = \mathrm{const},$ $0\le y\le 1$ is zero. Therefore
$$
\widetilde xd\widetilde y - \beta = dg.
$$
for some function $g$ on the annulus.

It is clear that
$$
dz + \mathrm{pr}^*\beta = d\widetilde z + \widetilde xd\widetilde y.
$$
\end{proof}

So we can assume from the beginning that $\beta = xdy.$
If a circle $x = \mathrm{const}$ does not intersect the curve $\mathrm{pr}(L)$ then they cobound an annulus. By Stokes theorem (see \cite[Chapter IX, Theorems 5A and 7A]{Whi}) the integral of $\beta$ on such circle $x = \mathrm{const}$ is positive if we orient it accordingly. This means that $x_0<0$ and $x_1>0.$ By Theorem~\ref{the-technical-lemma-without-parameters} there is an area preserving biLipschitz mapping $\Phi:S\to S$ such that
\begin{itemize}
\item
$\Phi$ is the identity near the boundary of $S$;
\item
$\Phi(0, y) = \mathrm{pr}(\iota(y)).$
\end{itemize}

Let $\widetilde \Phi:U\to U$ be the lifting of $\Phi$ as constructed in the introduction. The curve $\widetilde\Phi(\{x = z = 0\})$ can be obtained from $L$ by a parallel translation in the $z$-direction because these two curves are both Legendrian and have the same projection to $S$, so the statement follows by~\cite[Lemma 3.11]{Pra}. By choosing another lift we can assume that $\widetilde\Phi(\{x = z = 0\}) = L.$ So the mapping $\widetilde\Phi\circ\Psi$ is sought-for.
\end{proof}

\textsc{Moscow State University}

\textsc{Moscow Center of Fundamental and Applied Mathematics}

{\it E-mail}: \href{mailto: 0x00002a@gmail.com}{\texttt{0x00002a@gmail.com}}

\end{document}